\theoremstyle{plain}
\newtheorem{thm}{Theorem}[section] 
\newtheorem{lem}[thm]{Lemma}
\newtheorem{cor}[thm]{Corollary}
\newtheorem{thmintro}{Theorem}
\newtheorem{claim}[thm]{Claim}
\theoremstyle{definition}
\newtheorem{defn}[thm]{Definition}
\newtheorem{rem}[thm]{Remark}
\newtheorem{notation}[thm]{Notation}
\newtheorem{problintro}[thmintro]{Problem}
\newcommand{\frakS}{\mathfrak{S}}
\newcommand{\diam}{\mathrm{diam}}
\newcommand{\C}{\mathcal C}
\newcommand{\dist}{\mathrm{d}}
\newcommand{\nest}{\sqsubseteq}
\newcommand{\propnest}{\sqsubsetneq}
\newcommand{\orth}{\bot}
\newcommand{\transverse}{\pitchfork}
\newcommand{\MCG}{\mathcal{MCG}^\pm}
\newcommand{\Z}{\mathbb{Z}}
\newcommand{\M}{\mathbb{M}}
\newcommand{\gate}{\mathfrak{g}}
\newcommand{\Cay}[2]{\operatorname{Cay}\left(#1,#2\right)}
\newcommand{\ov}{\overline}
\author[G. Mangioni]{Giorgio Mangioni}
    \address{(Giorgio Mangioni) Department of Mathematics, Heriot-Watt University and Maxwell Institute for Mathematical Sciences, Edinburgh, UK}
    \email{gm2070@hw.ac.uk}
\title[Combination for HQC and geometric subgroups]{A combination theorem for hierarchically quasiconvex subgroups, and application to geometric subgroups of mapping class groups}
\begin{document}

\begin{abstract}
We provide sufficient conditions for two subgroups of a hierarchically hyperbolic group to generate an amalgamated free product over their intersection. The result applies in particular to certain geometric subgroups of mapping class groups of finite-type surfaces, that is, those subgroups coming from the embeddings of closed subsurfaces.

In the second half of the paper, we study under which hypotheses our amalgamation procedure preserves several notions of convexity, such as hierarchical quasiconvexity (as introduced by Behrstock, Hagen, and Sisto) and strong quasiconvexity (every quasigeodesic with endpoints on the subset lies in a uniform neighbourhood). This answers a question of Russell, Spriano, and Tran.
\end{abstract}

\maketitle

\epigraph{I will achieve in my life - Heaven grant that it be not long - some gigantic amalgamation between the two discrepancies so hideously apparent to me.}{Virginia Woolf}

\setcounter{tocdepth}{1}
\tableofcontents

\section*{Introduction}\footnote{Keywords: Mapping class groups, hierarchically hyperbolic groups, quasiconvex subgroups\\MSC classification: 20F65 (Primary) 57K20, 51F30 (Secondary)}
Given a group $G$ and two subgroups $A,B\le G$, it is natural to ask what the subgroup $\langle A, B\rangle_G$ generated by $A$ and $B$ looks like, and in particular if it is isomorphic to the \emph{amalgamated free product} $A *_{C} B.$ In this paper, we prove an amalgamation theorem for when $A$ and $B$ are subgroups of any group $G$ acting “nicely” on a \emph{hierarchical space} (see Definition~\ref{defn:Hierarchical_space} below). This class includes all relatively hyperbolic groups, and all \emph{hierarchically hyperbolic groups} (HHG) in the sense of Behrstock, Hagen, and Sisto \cite{HHS_II} (such as mapping class groups of finite-type surfaces, many $3$-manifold groups, many Coxeter and Artin groups, compact special groups\ldots). We give here a special case of the result, postponing the full statement to Section~\ref{sec:amalgamation}:

\begin{thmintro}\label{thmintro:amalgamation}
    Let $(G,\frakS)$ be a hierarchically hyperbolic group, let $A, B\le G$ be subgroups and let $C=A\cap B$. Suppose that there exists a constant $M\ge 0$ and a domain $Y_a\in \frakS$ for every $a\in (A\cup B)-C$, such that the following hold:
    \begin{enumerate}[label=(\Roman*)]
        \item $\max\left\{\diam_{Y_a}(Cx_0), \diam_{Y_a}(aCx_0)\right\}\le M/10$;
        \item $\dist_{Y_a}(Cx_0, aCx_0)\ge M$; 
        \item If $a\in A-C$ and $b\in B-C$, $Y_a\transverse aY_{b}$;
        \item In the same setting, $\dist_{Y_a}(Cx_0, bCx_0)\le M/10$.
    \end{enumerate}
    There exists $M_0\ge 0$, depending only on $(G,\frakS)$, such that, if $M\ge M_0$, then 
    $$\langle A,B\rangle_G\cong A *_C B.$$
\end{thmintro}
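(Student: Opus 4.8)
\textit{Proposed proof.} The identity-on-factors homomorphism $A *_C B\to\langle A,B\rangle_G$ is surjective, so the content is its injectivity, and by the normal form theorem for amalgamated products this amounts to showing that no \emph{reduced word} $w=g_1g_2\cdots g_n$ (with $n\ge 1$, each $g_i\in(A\cup B)-C$, and consecutive $g_i$ in distinct factors) represents an element of $C$ in $G$. Take the identity of $G$ as basepoint $x_0$, so $Cx_0=C$. Since $\diam_{Y_{g_1}}(Cx_0)\le M/10$ by (I), it suffices to prove $\dist_{Y_{g_1}}(x_0,wx_0)>M/10$ for every such $w$: indeed, if $w\in C$ then $wx_0\in Cx_0$ and so $\dist_{Y_{g_1}}(x_0,wx_0)\le\diam_{Y_{g_1}}(Cx_0)\le M/10$.

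The key is a ping-pong estimate in the coordinate spaces of the HHS, proved by induction on $n$: there is $M_0=M_0(G,\frakS)$ such that, for $M\ge M_0$, every reduced word $w=g_1\cdots g_n$ satisfies $\dist_{Y_{g_1}}\!\bigl(\pi_{Y_{g_1}}(wx_0),\pi_{Y_{g_1}}(g_1x_0)\bigr)\le M/5$. Granting this, (I) and (II) give $\dist_{Y_{g_1}}(x_0,wx_0)\ge\dist_{Y_{g_1}}(Cx_0,g_1Cx_0)-\diam_{Y_{g_1}}(Cx_0)-\diam_{Y_{g_1}}(g_1Cx_0)-M/5\ge M-M/10-M/10-M/5>M/10$, as required. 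The base case $n=1$ is trivial. For the inductive step I would track, along the word, the \emph{separating domains} $Y_{g_1},\,g_1Y_{g_2},\,g_1g_2Y_{g_3},\dots$: by (III) (and its mirror under exchanging $A$ and $B$) plus equivariance of the HHS structure, consecutive ones are transverse; by (I)--(II) each suitably translated $Y_{g_i}$ strongly separates the cosets $g_1\cdots g_{i-1}Cx_0$ and $g_1\cdots g_iCx_0$; and (IV) controls where the various cosets lie as seen from these domains. The one tool used throughout is Behrstock's inequality: for transverse $U\transverse V$ and any point $p$, $\min\{\dist_U(\pi_U(p),\rho^V_U),\dist_V(\pi_V(p),\rho^U_V)\}\le\kappa$, with $\kappa=\kappa(G,\frakS)$.

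Concretely, write $w=g_1w'$ with $w'=g_2\cdots g_n$ reduced and beginning in the factor opposite $g_1$; the inductive hypothesis for $w'$ gives $\dist_{Y_{g_2}}(w'x_0,g_2Cx_0)\le M/5$, hence (translating by $g_1$) that $wx_0$ lies within $M/5$ of $\pi_{g_1Y_{g_2}}(g_1g_2Cx_0)$ in $\C(g_1Y_{g_2})$. Set $U=Y_{g_1}$ and $V=g_1Y_{g_2}$, so $U\transverse V$. Using (I)--(II) for $g_1$, (IV), and one application of Behrstock's inequality, one shows $\rho^U_V$ lies within $M/10+O(\kappa)$ of $\pi_V(g_1Cx_0)$; since (II) for $g_2$ puts $\pi_V(g_1Cx_0)$ and $\pi_V(g_1g_2Cx_0)$ at distance $\ge M$, these facts force $\dist_V(\pi_V(wx_0),\rho^U_V)>\kappa$ once $M\ge M_0$, so Behrstock's inequality yields $\dist_U(\pi_U(wx_0),\rho^V_U)\le\kappa$. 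A symmetric computation — testing Behrstock's inequality at $U\transverse V$ against $g_1x_0$ and $g_1g_2x_0$ and invoking (IV) once more — locates $\rho^V_U$ within $M/10+O(\kappa)$ of $\pi_U(g_1x_0)$, whence $\dist_U(\pi_U(wx_0),\pi_U(g_1x_0))\le M/5$ for $M_0$ a suitable multiple of $\kappa$. This closes the induction, and the normal form theorem then gives $\langle A,B\rangle_G\cong A*_C B$.

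I expect the main obstacle to be exactly the relative-projection bookkeeping used above: the domains $g_1\cdots g_{i-1}Y_{g_i}$ appearing along a word are $G$-translates of the data $Y_a$ rather than part of it, so the relative projections $\rho^{gY_b}_{Y_a}$ between them must be pinned down by hand. Hypotheses (III)--(IV) are shaped precisely so that Behrstock's inequality can do this, with additive errors of size $M/10+O(\kappa)$ that (II) keeps well below the separation $M$; the delicate point is that this control must survive the relevant translations — in effect, the estimate of (IV) should also hold for the domains $g^{-1}Y_g$ — which I would arrange by normalizing the family $\{Y_a\}$ (taking $Y_{a^{-1}}=a^{-1}Y_a$) or obtain from the full form of the hypotheses. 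With $M_0$ larger than the requisite fixed multiple of the HHS constants, every dichotomy above resolves in the desired direction.
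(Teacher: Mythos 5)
Your proposal follows essentially the same route as the paper, in mirror image: the same ping-pong via the translated domains $Y_{g_1}, g_1Y_{g_2}, g_1g_2Y_{g_3},\dots$, transversality of consecutive ones from hypothesis~\ref{hyp::2}, and repeated use of the Behrstock inequality together with~\ref{hyp::1},~\ref{hyp::4},~\ref{hyp::3}; you package it as an induction on syllable length and read off the displacement in the \emph{first} domain $Y_{g_1}$, whereas the paper proves a chain of claims (including Lemma~\ref{lemma:alex_lemma}, that the whole family $W_i=g_1\cdots g_{i-1}Y_{g_i}$ is pairwise transverse) and reads it off in the \emph{last} domain $W_k$. Your reduction to ``no reduced word lies in $C$'' and the final arithmetic are fine.

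The only real issue is exactly the point you flag at the end, and your proposed patch does not close it. In the ``bad'' branch of your second Behrstock dichotomy (locating $\rho^{g_1Y_{g_2}}_{Y_{g_1}}$, the case where it is the coset $g_1g_2Cx_0$ that projects close to it), you need a bound of the form $\dist_{Y_{g_1}}(g_1Cx_0, g_1g_2Cx_0)\le M/10+O(E)$, equivalently hypothesis~\ref{hyp::3} for the translated domain $g_1^{-1}Y_{g_1}$; this is not what~\ref{hyp::3} literally gives, which only controls $\dist_{Y_a}(Cx_0,bCx_0)$. Re-normalizing the family by setting $Y_{a^{-1}}:=a^{-1}Y_a$ does not repair this: the new family still satisfies~\ref{hyp::1} and~\ref{hyp::4}, but checking~\ref{hyp::2} for the pair $(a^{-1},b)$ would require $Y_a\transverse Y_b$, which is not assumed, and~\ref{hyp::3} for the new family is exactly the estimate one is trying to produce. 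That said, you have identified a genuine subtlety rather than introduced a new one: the paper's own proof needs the identical estimate in the ``symmetric'' half of Claim~\ref{claim:projection of adjacent Wi}, since $\dist_{W_i}(C_{i+1},C_{i+2})=\dist_{g_i^{-1}Y_{g_i}}(Cx_0,g_{i+1}Cx_0)$, and this is dispatched there only with ``the second follows analogously''. In the intended applications (e.g.\ Theorem~\ref{thm:geometric}, where~\ref{hyp::3} is verified for \emph{all} subsurfaces nested in $U$ but not in $\Gamma$, a family closed under the relevant translations) the needed estimate does hold; to make either your induction or the paper's argument airtight from the abstract hypotheses, hypothesis~\ref{hyp::3} should be strengthened to also cover the domains $a^{-1}Y_a$ (equivalently, to require $\dist_{Y_a}(aCx_0,abCx_0)\le M/10$). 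With that adjustment your induction goes through and proves the theorem.
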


\subsection*{Amalgamation of geometric subgroups}
In understanding the above theorem, one should have in mind the following example. Let $G=\MCG(S)$ be the (extended) mapping class groups of a finite-type surface $S$. Let $U$, $V$ be two closed, connected, incompressible subsurfaces, such that no connected component of $S-U$ (resp. $S-V$) is an annulus. These conditions ensure that $\MCG(U)$ and $\MCG(V)$ naturally embed in $G$, and we denote the image of such embeddings as \emph{geometric embedded} subgroups\footnote{The terminology ``geometric'' is due to Paris and Rolfsen \cite{paris_rolfsen} and denotes the image of the homomorphism $\MCG(W)\to \MCG(S)$ for any closed subsurface $W$, without further assumptions. Our notation thus denotes the cases when a geometric subgroup is also embedded. If there is a standard name for such subgroups we would be grateful for a reference.}. Let $\Gamma$ be the collection of all curves which belong to the boundary of both $U$ and $V$; this multicurve might be empty even if $\partial U $ and $\partial V$ intersect. Suppose $\partial U-\Gamma$ and $\partial V-\Gamma$ are “sufficiently entangled” in the complement of $\Gamma$, meaning that they are far enough in the curve graph $\C (S-\Gamma)$. This condition ensures that the intersection of $\MCG(U)$ and $\MCG(V)$ is the Dehn Twist flat $\Z^{|\Gamma|}$ supported on $\Gamma$. Using the separability of the latter in both $\MCG(U)$ and $\MCG(V)$, one can find finite-index subgroups $A\le \MCG(U)$ and $B\le \MCG(V)$, such that every element $a\in A-\Z^{|\Gamma|}$ acts with large translation length on some subsurface $Y_a\nest U$ (Assumption (II)). Furthermore, the entanglement of the boundaries of $U$ and $V$ ensures that, whenever $a\in A-\Z^{|\Gamma|}$ and $b\in B-\Z^{|\Gamma|}$, the subsurfaces $Y_a$ and $Y_b$ must overlap (Assumption (III)). The above example is analysed more thoroughly in Subsection~\ref{sec:mcg}, where we prove the following, slightly more general result:

\begin{thmintro}\label{thmintro:geometric}
   Let $S$ be a connected finite-type surface, and let $\MCG(U)$ and $\MCG(V)$ be two geometric embedded subgroups, where each of $U$ and $V$ is either connected or a multicurve. Let $\Gamma$ be the collection of all curves which belong to the boundary of both $U$ and $V$ (where, with a little abuse of notation, the boundary of a multicurve denotes its support). Suppose that $\partial U-\Gamma$, $\partial V-\Gamma$ are both non-empty, and that $$\dist_{\C (S-\Gamma)}(\partial U-\Gamma,\partial V-\Gamma)\ge 4.$$

   Then there exist finite index subgroups $A\le \MCG(U)$, $B\le \MCG(V)$, intersecting along the Dehn twist flat $\Z^{|\Gamma|}$, such that
   $$\langle A,B\rangle_{\MCG(S)}\cong A *_{\Z^{|\Gamma|}} B.$$
\end{thmintro}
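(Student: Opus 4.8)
The plan is to apply Theorem~\ref{thmintro:amalgamation} to $G=\MCG(S)$ with $C=\Z^{|\Gamma|}$ the Dehn twist flat on $\Gamma$ and with finite‑index subgroups $A\le\MCG(U)$, $B\le\MCG(V)$ to be constructed, both containing $\Z^{|\Gamma|}$. Two purely topological facts, each obtained from elementary distance estimates in $\C(S-\Gamma)$ using $\dist_{\C(S-\Gamma)}(\partial U-\Gamma,\partial V-\Gamma)\ge 4$, will do most of the work. \textbf{(a)} A mapping class supported on both $U$ and $V$ fixes each component of $\partial U\cup\partial V$ up to isotopy; since $(\partial U-\Gamma)\cup(\partial V-\Gamma)$ fills $S-\Gamma$ (distance $\ge 3$ already forces this) and $\partial U-\Gamma,\partial V-\Gamma\ne\emptyset$, such a class acts trivially on $S-\Gamma$ and hence lies in $\Z^{|\Gamma|}$; thus $\MCG(U)\cap\MCG(V)=\Z^{|\Gamma|}$, and consequently $A\cap B=\Z^{|\Gamma|}$ as soon as $\Z^{|\Gamma|}\le A,B$. \textbf{(b)} If $W\in\frakS$ is nested in $U$ and is not an annulus about a component of $\Gamma$, then some boundary curve (or the core) of $W$ is a vertex of $\C(S-\Gamma)$ at distance $\le 1$ from $\partial U-\Gamma$, hence at distance $\ge 3$ from $\partial V-\Gamma$; in particular $W$ is neither nested in $V$ nor does it contain $V$, so $W$ is transverse to or disjoint from $V$, and therefore the orbit $\MCG(V)x_0$ has projection of uniformly bounded diameter $\kappa$ to $W$. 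The analogous statements with $U$ and $V$ interchanged hold as well; moreover $\MCG(U)$ fixes $\Gamma$ and $\partial U-\Gamma$ pointwise, so it acts on $\C(S-\Gamma)$ fixing $\partial U-\Gamma$ pointwise. (Here I use that geometric subgroups are hierarchically quasiconvex with defining domains exactly those nested in, or peripheral annuli of, the subsurface — presumably established in the mapping class group section.)

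Granting \textbf{(a)}--\textbf{(b)}, conditions (I) and (IV) become essentially free for \emph{any} choice of $A,B$ as above and \emph{any} choice of domains $Y_a\nest U$ (resp.\ $Y_b\nest V$) that are not annuli about $\Gamma$: each twist $T_\gamma$, $\gamma\in\Gamma$, fixes every curve of such a $W$, so $\Z^{|\Gamma|}$ projects to a single point of $\C(W)$, giving $\diam_W(Cx_0)=O(1)$ and — since $a^{-1}W\nest U$ when $a\in\MCG(U)$ — also $\diam_W(aCx_0)=O(1)$, which is (I); and $bCx_0\subseteq\MCG(V)x_0$ for $b\in B\le\MCG(V)$, so $\dist_{Y_a}(Cx_0,bCx_0)\le\kappa\le M/10$ once $M$ is large, which is (IV). For (III), let $a\in A-\Z^{|\Gamma|}$, $b\in B-\Z^{|\Gamma|}$ with chosen domains $Y_a\nest U$, $Y_b\nest V$ (neither an annulus on $\Gamma$). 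A boundary curve of $aY_b$ is the $a$‑image of a boundary curve of $Y_b$; since $a$ acts on $\C(S-\Gamma)$ fixing $\partial U-\Gamma$, part \textbf{(b)} (with $U,V$ swapped) still places it at distance $\ge 3$ from $\partial U-\Gamma$, while a boundary curve of $Y_a$ is within $1$ of $\partial U-\Gamma$. Hence these boundaries lie at distance $\ge 2$ in $\C(S-\Gamma)$, which rules out $Y_a$ and $aY_b$ being equal, nested, or disjoint: so $Y_a\transverse aY_b$. This is the one step that genuinely needs distance $\ge 4$ rather than $\ge 3$.

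It remains to arrange condition (II), which is where separability enters. Fix $M\ge M_0$ large compared to the HHS constants of $\MCG(S)$ and to $\kappa$, so that (I) and (IV) hold, and let $M'=M+O(1)$ as needed below. Work in the HHG $\MCG(U)$, whose domains are the subsurfaces $W\nest U$; let $\mathcal Y_U$ be those that are not annuli on $\Gamma$, and set $f(a)=\sup_{W\in\mathcal Y_U}\dist_W(x_0,ax_0)$. Since among domains of $\MCG(U)$ the twist $T_\gamma$, $\gamma\in\Gamma$, moves only the annular domain on $\gamma$, multiplying $a$ by a suitable element of $\Z^{|\Gamma|}$ and applying the distance formula for $\MCG(U)$ shows that $\{a\in\MCG(U):f(a)<M'\}\subseteq N_R(\Z^{|\Gamma|})$ for some $R=R(M')$. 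As $N_R(\Z^{|\Gamma|})$ equals $\Z^{|\Gamma|}$ times a finite ball, it meets only finitely many cosets $\Z^{|\Gamma|}g$ with $g\notin\Z^{|\Gamma|}$; using the separability of $\Z^{|\Gamma|}$ in $\MCG(U)$ we intersect finitely many finite‑index subgroups containing $\Z^{|\Gamma|}$ to get a finite‑index $A\le\MCG(U)$ with $A\cap N_R(\Z^{|\Gamma|})=\Z^{|\Gamma|}$; then $f(a)\ge M'$ for every $a\in A-\Z^{|\Gamma|}$. For each such $a$ choose $Y_a\in\mathcal Y_U$ with $\dist_{Y_a}(x_0,ax_0)\ge M'$; then $\dist_{Y_a}(Cx_0,aCx_0)\ge M'-O(1)\ge M$ by (I), which is (II). Running the same construction with $V$ in place of $U$ produces $B$ and the domains $Y_b$. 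All of (I)--(IV) now hold, and Theorem~\ref{thmintro:amalgamation} yields $\langle A,B\rangle_{\MCG(S)}\cong A*_{\Z^{|\Gamma|}}B$.

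The step I expect to be the main obstacle is condition (III): converting the hypothesis on $\C(S-\Gamma)$‑distance into the assertion that the domains $Y_a$ produced by the dynamics of $A$ are automatically \emph{transverse} — not merely non‑nested — to the $A$‑translates of the $Y_b$, inside the combinatorial structure $\frakS$, with the correct numerical margin. The separability bookkeeping for (II) is then routine, and (I), (IV) come almost for free from the observation that $\Z^{|\Gamma|}$ has (coarsely) trivial projection to every domain nested in $U$ that is not an annulus on $\Gamma$.
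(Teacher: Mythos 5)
Your overall strategy coincides with the paper's: use the distance-$4$ hypothesis to force domains nested in $U$ and in $V$ (other than annuli on $\Gamma$) to overlap, use separability of $\Z^{|\Gamma|}$ (Lemma~\ref{lem:separation_boundary}) together with properness/the distance formula to pass to finite-index subgroups in which every element outside the flat has a large projection to some $Y_a\nest U$ not nested in $\Gamma$, and then check (I)--(IV). Your verification of (III) via the isometric action of $a$ on $\C (S-\Gamma)$ fixing $\partial U-\Gamma$ is the paper's Claim~\ref{claim:W_W'} applied to $a^{-1}Y_a$ and $Y_b$; your ``bounded projections off the $\Gamma$-annuli implies lying in finitely many cosets of $\Z^{|\Gamma|}$'' step is the paper's finite-set-plus-separability argument in different packaging; and the uniform constant $\kappa$ you assert in (b) is the paper's $C_0$, whose finiteness needs (and gets, in the paper) the auxiliary-marking argument with $y_U,y_V$.

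The one genuine gap is step (a). From the fact that $g\in\MCG(U)\cap\MCG(V)$ fixes each curve of the filling collection $(\partial U-\Gamma)\cup(\partial V-\Gamma)$ you conclude that $g$ ``acts trivially on $S-\Gamma$''. That implication is not justified: fixing every curve of a filling system up to isotopy only yields, via the Alexander method, a representative preserving the union, and finite-order mapping classes (hyperelliptic involutions, for instance) do fix filling systems curve-wise, so triviality does not follow without an extra argument. This is precisely the point handled in Claim~\ref{claim:Mu_cap_Mv}: the paper first passes to a power $g^k$ in canonical form (partial pseudo-Anosovs and twists whose supports are nested in both $U$ and $V$, hence in $\Gamma$ by Claim~\ref{claim:W_W'}), so $g^k\in\Z^{|\Gamma|}$, and then kills torsion by capping $\Gamma$ and using that $\MCG(\widehat U)$ is torsion-free because $\partial U-\Gamma\neq\emptyset$. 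You need some version of this; alternatively you can sidestep it, since the full statement, Theorem~\ref{thm:amalgamation}, deduces $C=A\cap B$ from hypotheses (II) and (IV) alone (applied with $a=b$), so once (I)--(IV) are verified the equality $A\cap B=\Z^{|\Gamma|}$ comes for free. As written, however, your appeal to Theorem~\ref{thmintro:amalgamation} with $C=A\cap B=\Z^{|\Gamma|}$ rests on the unproved triviality claim.
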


\subsubsection*{Comparison with the literature}
Our Theorem~\ref{thmintro:geometric} is similar in spirit to Leininger-Reid's result for Veech subgroups along a common multitwist \cite{Leininger_reid}. The main difference is that, while the subgroups there are supported on the whole surface $S$ (and indeed every element which does not lie in the intersection is pseudo-Anosov), our result deals with reducible subgroups, and the large translations of the elements are witnessed by pairs of transverse subsurfaces. 

Our theorem also covers the case when $U$ and $V$ are multicurves. In this setting, it should be compared to Loa's result about free product of multitwists supported on “far enough” multicurves \cite{Loa}. While our procedure requires to pass to finite-index subgroups with large translation in the annular domains, theirs applies to the whole Dehn Twist flats supported on the multicurves, but only when the intersection $U\cap V$ is empty. We also stress that Loa's result gives more information about the amalgam, including the fact that it is undistorted in $\MCG(S)$ and \emph{parabolically geometrically finite}, in the sense of \cite{ddhs}.

However, we point out that our result about multicurves is just a very special case of a theorem of Koberda, which can be used to produce more general, undistorted RAAGs in $\MCG(S)$ (see \cite{koberda_raags} and its quantitative version by Runnels \cite{runnels_effective_gen}).

\subsection*{Preserving convexity}
If $A$ and $B$ satisfy some property $P$, it is natural to ask when the subgroup they generate still enjoys $P$. The feature we focus on in the second half of the paper is \emph{hierarchical quasiconvexity} (HQC for short), which is the analogue of quasiconvexity in the world of hierarchically hyperbolic groups. A hierarchically quasiconvex subgroup of a HHG enjoys numerous geometric properties regarding, for example, a coarse median structure and a quadratic isoperimetric function \cite{Bow13, Bow18}, its asymptotic dimension \cite{asdim}, and the arrangement patterns of top-dimensional quasiflats \cite{quasiflat}. Therefore, it is relevant to understand when the subgroup generated by two hierarchically quasiconvex subgroups is again hierarchically quasiconvex. We provide sufficient conditions for our amalgamation procedure to preserve hierarchical quasiconvexity (the exact statement is Theorem~\ref{thm:amalgamation_of_quasiconvex}):
\begin{thmintro}\label{thmintro:amalgamation_HQC}
    Let $(G,\frakS)$ be a HHG, let $A,B\le G$ be two hierarchically quasiconvex subgroups, and let $C=A\cap B$. Suppose that:
    \begin{itemize}
        \item $A$ and $B$ satisfy the hypotheses of Theorem~\ref{thmintro:amalgamation}, for some $M\ge 0$; 
        \item $A$ and $B$ \emph{fill all squares} (Definition~\ref{defn:fill_all_gaps});
        \item $A$ and $B$ \emph{have no drift in the orthogonals} (Definition~\ref{defn:no_drift}).
    \end{itemize}
    There exists a constant $M_0\ge 0$, depending on $(G,\frakS)$ and the above data, such that, if $M\ge M_0$, then $\langle A,B\rangle_G\cong A*_C B$ is hierarchically quasiconvex in $(G,\frakS)$.
\end{thmintro}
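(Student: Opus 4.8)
The plan is to verify the standard characterization of hierarchical quasiconvexity: a subset $Z$ of an HHG is HQC precisely when it is "hierarchically quasiconvex" in the sense of Behrstock–Hagen–Sisto, i.e.\ each projection $\pi_Y(Z)$ is uniformly quasiconvex in $\C Y$, and $Z$ coarsely coincides with the image of the realization map on tuples that are consistent with the $\pi_Y(Z)$'s. Since Theorem~\ref{thmintro:amalgamation} already gives us the isomorphism type $\langle A,B\rangle_G\cong A*_C B$, the group $\Lambda:=\langle A,B\rangle_G$ acts on its Bass–Serre tree $T$, and every element of $\Lambda$ has a normal form as an alternating product of coset representatives. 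I would fix a basepoint $x_0$ and describe the orbit $\Lambda x_0$ as a union, over vertices of $T$, of translates of $Ax_0$ and $Bx_0$; the combinatorial geometry of $T$ together with the transversality conditions (III)–(IV) control how these translates are arranged inside $(G,\frakS)$. This is where the two new hypotheses enter: \emph{filling all squares} should guarantee that when we concatenate pieces along the tree, no "gap" domain $Y$ is left where $\pi_Y(\Lambda x_0)$ fails to be connected/quasiconvex, and \emph{no drift in the orthogonals} should ensure that passing from one factor to the next does not cause the projection to a domain orthogonal to the active one to wander, which would otherwise destroy the product-region structure.

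Concretely, I would proceed in the following steps. First, for each domain $Y\in\frakS$, analyze $\pi_Y(\Lambda x_0)$ by splitting into cases according to whether $Y$ is (a) "seen" by $Cx_0$, (b) "seen" by a single translate $gAx_0$ or $gBx_0$ but not by $C$, or (c) a domain where consecutive syllables force a large projection via (I)–(IV). In cases (a) and (b), quasiconvexity of $\pi_Y(\Lambda x_0)$ follows from HQC of $A$, $B$, and $C=A\cap B$ (which is HQC as an intersection of HQC subgroups, by the retraction/gate argument), together with a bounded-geodesic-image style argument showing the other translates contribute only a bounded set. In case (c), the key point is that the domains $Y_a$ and their translates are pairwise transverse (Assumption (III)), so along any normal-form word the relevant projections change one domain at a time, and one gets a "staircase" in $\C Y$ that is uniformly quasiconvex by the usual large-link / hierarchy-path argument. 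Second, verify the realization/coarse-surjectivity condition: given a consistent tuple $(b_Y)$ with $b_Y\in\pi_Y(\Lambda x_0)$, one reconstructs a point of $\Lambda x_0$ by reading off, domain by domain, which syllable of the normal form is responsible for $b_Y$; here "fill all squares" is what makes the reconstruction unambiguous when several orthogonal domains are simultaneously far from $Cx_0$, and "no drift in the orthogonals" is what makes the reconstructed element well-defined up to bounded error across the transition domains of the Bass–Serre tree.

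The main obstacle I anticipate is precisely this realization step in the presence of orthogonality: the $\transverse$ hypotheses (III)–(IV) control transverse domains, but a priori a normal-form word could accumulate unbounded projection in a \emph{tower} of mutually orthogonal domains, one contributed by each syllable, in which case the product region $P_Y$ would need to absorb all of them coherently. The two extra definitions (\emph{fill all squares}, \emph{no drift in the orthogonals}) are designed exactly to rule this out, so the heart of the proof is to extract from those definitions a clean quantitative statement — something like: for any normal-form element $g=g_1\cdots g_n$ and any domain $Y$ with $\dist_Y(x_0,gx_0)$ large, there is a \emph{single} index $i$ and a single factor-coset such that $\pi_Y(g_1\cdots g_i x_0)$ already realizes that distance up to uniformly bounded error, with the orthogonal complement behaving like it does in one factor. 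Once that "single-syllable domination" lemma is in hand, both the quasiconvexity of projections and the realization condition follow by the same bookkeeping as in the proof of Theorem~\ref{thmintro:amalgamation}, and one invokes the BHS criterion to conclude that $\Lambda$ is HQC, with the threshold $M_0$ coming from the maximum of the constant in Theorem~\ref{thmintro:amalgamation} and the constants implicit in the two new hypotheses and in the HQC constants of $A$ and $B$.
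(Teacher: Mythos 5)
Your plan is not the paper's route, and as written it has two genuine gaps. First, the ``single-syllable domination'' lemma you propose as the heart of the argument is false in the stated generality: for a domain such as the $\nest$-maximal one, the displacement $\dist_Y(x_0,gx_0)$ of a normal-form word $g=g_1\cdots g_n$ typically accumulates a bounded amount per syllable (think of $a^nb^na^n$ in a free group acting on its Cayley graph), so no single index $i$ realizes the distance up to uniformly bounded error. The control one actually has (and that the paper proves, in Lemma~\ref{lem:Cj_close_in_W_i} and the Claims of Section~\ref{sec:amalgamation}) is only for the special detecting domains $W_i$, not for an arbitrary domain with large projection, so the reduction ``quasiconvexity of projections and realization both follow by the same bookkeeping'' does not go through. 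Second, and more seriously, the realization/coarse-surjectivity step for $\Lambda x_0$ is exactly the hard point — the counterexample in Subsection~\ref{subsec:no_drift} shows it is where the two extra hypotheses earn their keep — and your proposal only asserts that ``fill all squares'' makes the reconstruction unambiguous and ``no drift'' makes it well-defined; no quantitative argument is given, and it is not clear how to produce one along these lines, since the tuple $(b_Y)$ may spread its coordinates over infinitely many cosets attached along the Bass–Serre tree.

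For comparison, the paper never verifies realization for $A*_CB$ directly. It uses the Russell–Spriano–Tran characterization (HQC is equivalent to being coarsely closed under hierarchy paths, Lemma~\ref{lem:equivalent_def_HQC}) together with their hull theorem (the $N$-fold hierarchy-path hull of any set is HQC, Lemma~\ref{lem:hierarchyhull_HQC}). The key technical statement, Lemma~\ref{lem:nbh_chiuso_per_hp}, shows that a hierarchy path $\gamma'$ from $1$ to $w=g_1\cdots g_kc$ must pass uniformly close to each intermediate coset $A_i$: one finds points $\ell_i\in\gamma'$ projecting far from both endpoints in $\C W_i$, and then bounds $\dist_U(\ell_i,A_i)$ for every domain $U$ by a five-case analysis of the relation between $U$ and $W_i$ (this is where ``no drift'' and ``fill all squares'' enter, in the orthogonal case), after which realization is invoked only for the single coset $A_i$, which is HQC by hypothesis. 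The path then decomposes into subpaths with endpoints near cosets of $A\cup B$, whose hierarchical quasiconvexity is exactly the content of ``fill all squares'' (Theorem~\ref{lem:AUB}), and an $N$-fold iteration over neighbourhoods closes the argument. If you want to salvage your approach, you would essentially have to reprove this local statement about arbitrary domains versus the $W_i$, at which point you are reconstructing the paper's proof rather than shortcutting it.
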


Roughly, two HQC subgroups $A$ and $B$ fill all squares if, whenever two domains $U,V\in \frakS$ are orthogonal, if $A$ has large projection to $\C U$ and $B$ to $\C V$ then the intersection $A\cap B$ also has large projection to one of the domains. This property is equivalent to the fact that $A\cup B$ is hierarchically quasiconvex (see Lemma~\ref{lem:AUB}). 

Moreover, $A$ and $B$ have no drift in the orthogonals if it does \emph{not} happen that both $A$ and $B$ have bounded projections to some domain $U$, which is orthogonal to the domains $Y_a$ and $Y_b$ used to detect the amalgamation. In Subsection~\ref{subsec:no_drift} we provide a counterexample where the lack of this property falsifies the conclusion of Theorem~\ref{thmintro:amalgamation_HQC}.

\par\medskip
In our third and last Theorem, we study when our amalgamation procedure preserves \emph{strong quasiconvexity}. Recall that, given a metric space $X$, a subspace $Y\subseteq X$ is \emph{strongly quasiconvex} if every quasigeodesic $\gamma$ with endpoints on $Y$ lies in a neighbourhood of $Y$, whose radius only depends on $X$ and on the quasigeodesic constants of $\gamma$. Such a subset is also called \emph{Morse}, as strongly quasiconvex geodesics are exactly the Morse directions. Most of the properties of quasiconvex subsets of hyperbolic spaces hold for strongly quasiconvex subsets of general metric spaces \cite{Tran}. Furthermore, as explored in \cite{RST}, a subspace of a HHG is strongly quasiconvex if and only if it is hierarchically quasiconvex and enjoys a further assumption, the \emph{orthogonal projection dichotomy}. In Theorem~\ref{thm:amalgamation_of_strongly_quasiconvex_final_version} we prove that the latter property is preserved by our amalgamation procedure. This way, we provide a possible answer to \cite[Question 1]{RST}:
\begin{thmintro}\label{thmintro:amalgamation_strongqc}
Let $(G,\frakS)$ be a HHG, let $A,B\le G$ be two strongly quasiconvex subgroups of $G$, and let $C=A\cap B$. Suppose that $A$ and $B$ satisfy the hypotheses of Theorem~\ref{thm:amalgamation}, for some constant $M\ge 0$.
 
There exists a constant $M_0\ge 0$, depending on $(G,\frakS)$ and the strong quasiconvexity gauge of $A$ and $B$, such that if $M\ge M_0$ then $\langle A,B\rangle_G\cong A*_C B$ is strongly quasiconvex in $G$.
\end{thmintro}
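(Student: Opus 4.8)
The plan is to leverage the characterization of strong quasiconvexity from \cite{RST}: a subset of a HHS is strongly quasiconvex if and only if it is hierarchically quasiconvex and satisfies the \emph{orthogonal projection dichotomy} (OPD), which says that for every domain $U$, if the projection of the subset to $\C U$ is sufficiently large, then the subset "fills" all domains orthogonal to $U$ (its projection to each such $\C V$ has uniformly bounded diameter... wait, actually: it is coarsely all of $\C V$). So the proof splits into two halves: first show $\langle A, B\rangle_G$ is hierarchically quasiconvex, then verify it satisfies OPD.

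For hierarchical quasiconvexity, I would like to invoke Theorem~\ref{thmintro:amalgamation_HQC}, but that requires the hypotheses "fill all squares" and "no drift in the orthogonals." So the first substantive step is to check that these follow automatically from the assumption that $A$ and $B$ are \emph{strongly} quasiconvex. Strong quasiconvexity is strictly stronger than hierarchical quasiconvexity, and in particular each of $A$ and $B$ individually satisfies OPD; I expect that this, combined with the standard fact that strongly quasiconvex subgroups have "no unbounded orthogonal directions they don't fill," forces both the fill-all-squares condition (since if $A$ has large projection to $\C U$ with $U \perp V$, then $A$ already coarsely fills $\C V$, so a fortiori $C$'s behaviour becomes irrelevant to the obstruction — this needs care) and the no-drift condition (if $A$ has bounded projection to some $U$ orthogonal to the $Y_a$'s, then since $A$ has unbounded projection to $Y_a$, OPD for $A$ forces $A$ to coarsely fill $\C U$, contradiction). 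Making these implications precise, with uniform constants depending only on the strong quasiconvexity gauges, is the first task.

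Having obtained that $H := \langle A, B\rangle_G \cong A *_C B$ is hierarchically quasiconvex, the heart of the proof is verifying OPD for $H$. Fix a domain $U$ such that $\diam_{\C U}(H x_0)$ is large, and a domain $V$ with $V \perp U$; I must show $H x_0$ coarsely covers $\C V$. The key geometric input is the description of $H x_0$ and its projections coming from the amalgamation: using the tree of spaces / Bass-Serre structure of $A *_C B$ together with the distance formula, a large projection of $H x_0$ to $\C U$ must be "accounted for" either by a single coset $g A x_0$ or $g B x_0$ having large projection to $\C U$, or by the "detecting domains" $Y_a$ contributing. In the first case, $gA$ (or $gB$) is strongly quasiconvex with the same gauge as $A$ (strong quasiconvexity is conjugation-invariant), so OPD for $gA$ gives that $gA x_0 \subseteq H x_0$ coarsely covers $\C V$, and we are done. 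In the second case — where the large projection to $\C U$ comes from the amalgamation itself, i.e.\ $U$ is (close to) one of the $Y_a$ or is "between" two translates — I would argue that the relevant $Y_a$ already has large projection, reduce to the behaviour of $A$ or $B$ via hypotheses (I)--(IV), and again apply their individual OPD.

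The main obstacle I anticipate is precisely this second case: controlling which domains $U$ can acquire large $H x_0$-projection purely from the amalgamated structure, and showing that any domain orthogonal to such a $U$ is orthogonal to (or nested in, or appropriately related to) a domain already "seen" by $A$ or $B$ — so that their OPD can be invoked. This requires a careful analysis, via the consistency/partial realization axioms and the transversality hypothesis (III), of the poset relations between $U$, $V$, the $Y_a$'s, and their translates; in particular ruling out the scenario that $V \perp U$ while $V$ is genuinely "new" (transverse to everything $A$ and $B$ fill), which would break the argument. I expect this is where $M_0$ must be taken large — large enough that any domain with large $H x_0$-projection is forced into one of the controlled configurations — mirroring how $M_0$ is chosen in the proof of Theorem~\ref{thmintro:amalgamation_HQC}. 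Once the poset bookkeeping is done, the conclusion follows by assembling the uniform constants and citing \cite{RST} in the reverse direction to upgrade "HQC + OPD" back to strong quasiconvexity.
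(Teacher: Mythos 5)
Your overall route is the same as the paper's: invoke the Russell--Spriano--Tran characterisation (Lemma~\ref{lem:russell_spriano_tran}), show that the orthogonal projection dichotomy (OPD) of $A$ and $B$ forces the two extra hypotheses of the HQC combination theorem, and then verify OPD for $A*_CB$ directly. Your first half is essentially right and matches the paper: the no-drift argument you sketch is exactly the one used, and the fill-all-squares deduction (if $\diam_U(A)$ and $\diam_V(B)$ are both large with $U\orth V$, then $\pi_U(B)$ is $\Theta$-dense in $\C U$, so $\pi_U(\gate_A(B))$ is $(2\Theta+1)$-dense in $\pi_U(A)$) is a one-line refinement of what you wrote.

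The genuine gap is in the second half, and it sits precisely where you say you ``anticipate the main obstacle.'' Your reduction --- that a large projection of $\langle A,B\rangle x_0$ to $\C U$ must be ``accounted for'' either by a single coset or by the detecting domains, via a Bass--Serre/distance-formula argument --- is not established, and no distance formula for the amalgam is available a priori (its HHS structure is exactly what is being proved). The paper avoids this entirely with an elementary chain argument: pick $w=g_1\ldots g_kc$ with $\dist_U(1,w)\ge\mathfrak T/2$ and consider the cosets $A_i=g_1\ldots g_{i-1}A$ (or $B$) along its normal form. If some $\diam_U(A_i)\ge\Theta$, that coset's OPD finishes the proof (your ``first case''). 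Otherwise, since consecutive cosets intersect, their $U$-projections form a chain from $\pi_U(1)$ to $\pi_U(w)$, so some $A_j$ projects far from both endpoints; one then analyses the relation of $U$ to the domains $W_r$ with $|r-j|\le 3$: transversality is excluded by the Behrstock inequality, orthogonality to two consecutive $W$'s by the no-drift property, and nesting of $U$ into three of the seven $W$'s by the consistency axiom together with Claim~\ref{claim:distance_of_consecutives}; the only surviving configuration is $W_r\nest U$, and then $V\orth W_r$ and $\diam_{W_r}(A_r)\ge M\ge\Theta$ let the OPD of $A_r$ produce $\Theta$-density of $\pi_V(A_r)$ in $\C V$. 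Without this case analysis (or a substitute for it), your proposal stops exactly at the crux, so as written it is a plan rather than a proof; note also that the choice of $M_0$ enters only through making this dichotomy and the earlier claims work, not through any new largeness requirement on the tree of spaces.
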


In \cite[Theorem G]{RST_local_to_global}, Russell, Spriano, and Tran work in the context of \emph{local-to-global} groups, which include hierarchically hyperbolic groups by combining \cite[Theorem 4.18]{RST_local_to_global} and results from \cite{ABD}, and prove a combination theorem for \emph{stable} subgroups, i.e. strongly quasiconvex subgroups which are also \emph{hyperbolic}. Our Theorem~\ref{thmintro:amalgamation_strongqc} can therefore be seen as a generalisation of their result in the setting of hierarchically hyperbolic spaces, though the requirements of Theorem~\ref{thm:amalgamation} are stronger and specific to the hierarchical framework.

\subsection*{Towards relative hierarchical quasiconvexity} 
In a relatively hyperbolic group, the ``right'' notion of convexity of a subgroup is \emph{relative quasiconvexity}. Indeed, a relatively quasiconvex subgroup inherits a relative hyperbolic structure, and the intersection of two relatively quasiconvex subgroups is again relatively quasiconvex. As both HHG and relatively hyperbolic groups fall into the category of ``relative HHG'', in the sense of Definition~\ref{def:rel_HHG} below, one could look for a notion that unifies relative quasiconvexity and hierarchical quasiconvexity.

\begin{problintro}
    Let $(G,\frakS)$ be a relative HHG, and let $\frakS_0$ be a collection of domains which is closed under nesting and contains every $U\in\frakS$ such that $\C U$ is not hyperbolic. Formulate a notion of \emph{hierarchical quasiconvexity relative to $\frakS_0$} for subgroups of $G$, such that:
    \begin{itemize}
        \item If $(G,\frakS)$ is a HHG and $\frakS_0=\emptyset$, one recovers hierarchical quasiconvexity;
        \item If $(G,\mathbb P)$ is relatively hyperbolic and $\frakS_0=\mathbb P$, one recovers relative quasiconvexity;
        \item Under suitable conditions on $\frakS_0$, if a subgroup is HQC relative to $\frakS_0$ then it admits a structure of a \emph{relative hierarchically hyperbolic space} (see Definition~\ref{defn:relHHS});
        \item The intersection of two HQC subgroups relative to $\frakS_0$ is HQC relative to $\frakS_0$.
    \end{itemize}
\end{problintro}

We believe a possible approach would be to generalise the notion of \emph{transition points} on a geodesic in a relatively hyperbolic group, and then try to emulate the characterisation of relative quasiconvexity from \cite[Corollary 8.16]{hruska:qc}.

After one finds the right definition, one could attempt to extend Theorem~\ref{thmintro:amalgamation_HQC} to relative HQC subgroups, possibly generalising known combination theorems for relatively quasiconvex subgroups (see, among others, \cite{Gitik_quasiconvex_in_hyp, combination_quasiconv1, combination_quasiconv2}).

\subsection*{Outline}
Section~\ref{sec:HHS} provides the background on hierarchical hyperbolicity. In Section~\ref{sec:amalgamation} we prove the main amalgamation result, Theorem~\ref{thmintro:amalgamation}, which we then apply to certain geometric subgroups of mapping class groups in Section~\ref{sec:mcg}.

In Section~\ref{sec:combination_HQC} we strengthen our result to preserve hierarchical quasiconvexity. The proof of Theorem~\ref{thmintro:amalgamation_HQC} is in three steps. In Subsection~\ref{subsec:hqc}, we first recall that being HQC coincides with being almost closed under certain quasigeodesics, called \emph{hierarchy paths}. Next, in Subsection~\ref{subsec:aub} we determine under which conditions the union $A\cup B$ of two HQC subgroups is again HQC (see Lemma~\ref{lem:AUB}). Finally, given a hierarchy path connecting two points of $A\cup B$, in Subsection~\ref{subsec:amalgam_hqc} we show that it can be decomposed as a union of hierarchy paths with endpoints on cosets of $A\cup B$, and the conclusion follows from the hierarchical quasiconvexity of the latter cosets.

Finally, Section~\ref{sec:combination_strong} is devoted to the proof of the combination result for strongly quasiconvex subgroups, Theorem~\ref{thmintro:amalgamation_strongqc}.

\subsection*{Acknowledgements}
Firstly, I would like to thank my supervisor, Alessandro Sisto, for his constant support (even during the summer break) and several hints. Moreover, this paper arose as a side quest in a bigger, slightly unrelated project (talk about serendipity); so I am grateful to Yago Antolin, Matt Cordes, Giovanni Sartori, and Alessandro Sisto for numerous contributions to the first half of the paper, and for letting me publish it in this form. Finally, I thank the referee for the numerous comments that improved the exposition.

\section{A crash course in hierarchical hyperbolicity}\label{sec:HHS}
We start by recalling some notions from the world of \emph{hierarchically hyperbolic spaces and groups}, first introduced by Behrstock, Hagen, and Sisto in \cite{HHS_I}. 
\begin{defn}[Hierarchical space]\label{defn:Hierarchical_space}
The quasigeodesic space  $(X,\dist_{X})$ is a \emph{hierarchical space} if there exists $E\geq0$, called the \emph{hierarchical constant}, an index set $\frakS$, whose elements will be referred to as \emph{domains}, and a set $\{\C  U\mid U\in\frakS\}$ of geodesic metric spaces $(\C  U,\dist_U)$, called \emph{coordinate spaces},  such that the following conditions are satisfied:
\begin{enumerate}
\item\textbf{(Projections.)}\label{item:dfs_curve_complexes}
There is a set $\{\pi_U:  X\rightarrow 2^{\C  U}\mid U\in\frakS\}$ of \emph{projections} mapping points in $ X$ to sets of diameter bounded by $E$ in the various $\C  U\in\frakS$. Moreover, for all $U\in\frakS$, the coarse map $\pi_U$ is $(E,E)$--coarsely Lipschitz and $\pi_U( X)$ is $E$--quasiconvex in $\C  U$.

\item \textbf{(Nesting.)} \label{item:dfs_nesting}
$\frakS$ is equipped with a partial order $\nest$, and either $\frakS=\emptyset$ or $\frakS$ contains a unique $\nest$--maximal element, denoted by $S$. When $V\nest U$, we say $V$ is \emph{nested} in $U$. For each $U\in\frakS$, we denote by $\frakS_U$ the set of $V\in\frakS$ such that $V\nest U$. Moreover, for all $U,V\in\frakS$ with $V\propnest U$ there is a specified subset $\rho^V_U\subset\C  U$ with $\diam_{\C  U}(\rho^V_U)\leq E$. There is also a \emph{projection} $\rho^U_V: \C U\rightarrow 2^{\C V}$. (The similarity in notation is justified by viewing $\rho^V_U$ as a coarsely constant map $\C V\rightarrow 2^{\C  U}$.)
 
\item \textbf{(Orthogonality.)} \label{item:dfs_orthogonal}
$\frakS$ has a symmetric and anti-reflexive relation called \emph{orthogonality}: we write $U\orth V$ when $U,V$ are orthogonal. Also, whenever $V\nest U$ and $U\orth W$, we require that $V\orth W$. We require that for each $T\in\frakS$ and each $U\in\frakS_T$ such that $\{V\in\frakS_T\mid V\orth U\}\neq\emptyset$, there exists a domain $W\in\frakS_T-\{T\}$, which we call a \emph{container} for $U$ inside $T$, such that whenever $V\orth U$ and $V\nest T$, we have $V\nest W$. Finally, if $U \orth V$, then $U,V$ are not $\nest$--comparable.

\item \textbf{(Transversality and consistency.)}\label{item:dfs_transversal}
If $U,V\in\frakS$ are not orthogonal and neither is nested in the other, then we say $U,V$ are \emph{transverse}, denoted $U\transverse V$. In this case, there are sets $\rho^V_U\subseteq\C U$ and $\rho^U_V\subseteq\C  V$, each of diameter at most $E$ and satisfying the \emph{Behrstock inequality}:
$$\min\left\{\dist_{U}(\pi_U(z),\rho^V_U),\dist_{V}(\pi_V(z),\rho^U_V)\right\}\leq E$$
for all $z\in  X$.

For $U,V\in\frakS$ satisfying $V\nest U$ and for all $z\in X$, we have: 
$$\min\left\{\dist_{U}(\pi_U(z),\rho^V_U),\diam_{\C V}(\pi_V(z)\cup\rho^U_V(\pi_U(z)))\right\}\leq E.$$ 
 
The preceding two inequalities are the \emph{consistency inequalities} for points in $ X$.
 
Finally, if $U\nest V$, then $\dist_W(\rho^U_W,\rho^V_W)\leq E$ whenever $W\in\frakS$ satisfies either $V\propnest W$ or $V\transverse W$ and $W\not\bot U$.
 
\item \textbf{(Finite complexity.)} \label{item:dfs_complexity}
There exists $n\geq0$, the \emph{complexity} of $ X$ (with respect to $\frakS$), so that any set of pairwise--$\nest$--comparable elements has cardinality at most $n$.
  
\item \textbf{(Large links.)} \label{item:dfs_large_link_lemma}
Let $U\in\frakS$, let $z,z'\in X$ and let $N=\dist_{_U}(\pi_U(z),\pi_U(z'))$. Then there exists $\{T_i\}_{i=1,\dots,\lfloor N\rfloor}\subseteq\frakS_U- \{U\}$ such that, for any domain $T\in\mathfrak S_U-\{U\}$, either $T\in\frakS_{T_i}$ for some $i$, or $\dist_{T}(\pi_T(z),\pi_T(z'))<E$.  Also, $\dist_{U}(\pi_U(z),\rho^{T_i}_U)\leq N$ for each $i$.

\item \textbf{(Bounded geodesic image.)}\label{item:dfs:bounded_geodesic_image}
For all $U\in\frakS$, all $V\in\frakS_U- \{U\}$, and all geodesics $\gamma$ of $\C  U$, either $\diam_{\C  V}(\rho^U_V(\gamma))\leq E$ or $\gamma\cap N_E(\rho^V_U)\neq\emptyset$.
 
\item \textbf{(Partial realisation.)} \label{item:dfs_partial_realisation}
Let $\{V_j\}$ be a family of pairwise orthogonal elements of $\frakS$, and let $p_j\in \pi_{V_j}( X)\subseteq \C  V_j$. Then there exists $z\in  X$, which we call a \emph{partial realisation point} for the family, so that:
\begin{itemize}
\item $\dist_{V_j}(z,p_j)\leq E$ for all $j$,
\item for each $j$ and 
each $V\in\frakS$ with $V_j\nest V$, we have 
$\dist_{V}(z,\rho^{V_j}_V)\leq E$, and
\item for each $j$ and 
each $V\in\frakS$ with $V_j\transverse V$, we have $\dist_V(z,\rho^{V_j}_V)\leq E$.
\end{itemize}

\item\textbf{(Uniqueness.)} For each $\kappa\geq 0$, there exists
$\theta_u=\theta_u(\kappa)$ such that if $x,y\in X$ and
$\dist_{ X}(x,y)\geq\theta_u$, then there exists $V\in\frakS$ such
that $\dist_V(x,y)\geq \kappa$.\label{item:dfs_uniqueness}
\end{enumerate}
We often refer to $\frakS$, together with the nesting and orthogonality relations, and the projections as a \emph{hierarchical structure} for the space $X$. 
\end{defn}

\begin{rem}
    Notice that, if $E$ is a hierarchical constant for $(X, \frakS)$, then so is any $E'\ge E$. Hence, throughout the paper we will always implicitly assume that every hierarchical constant is strictly positive.
\end{rem}

\begin{rem}
Where it will not cause confusion, given $U\in\frakS$, we will often suppress the projection map $\pi_U$ when writing distances in $\C U$, i.e., given $A,B\subseteq X$ and $P\subseteq \C U$ we shall write $d_U(A,B)$ for $d_{\C U}(\pi_U(A),\pi_U(B))$ and  $d_U(A,P)$ for $d_{\C U}(\pi_U(A),P)$. Furthermore, when $V_1, V_2\in\frakS$ are such that $V_i\propnest U$ or $V_i\transverse U$ for $i=1,2$, we will write $d_U(V_1,V_2)$ for $d_{\C U}(\rho^{V_1}_U,\rho^{V_2}_U)$.
\end{rem}

\begin{defn}[(Relative) HHS]\label{defn:relHHS}
A hierarchical space is
\begin{itemize}
\item a \emph{hierarchically hyperbolic space} (HHS) if every coordinate space is $E$-hyperbolic;
\item a \emph{relatively hierarchically hyperbolic space} (relative HHS) if every coordinate space is either $E$-hyperbolic or $\nest$-minimal. Notice that this class includes relatively hyperbolic spaces and groups, as explained in \cite[Remark 1.15]{HHS_II}.
\end{itemize}
\end{defn}

All properties of hierarchically hyperbolic spaces whose proofs do not involve the hyperbolicity of coordinate spaces also hold for hierarchical spaces in general. In particular, the following is proved as in \cite[Lemma 1.5]{DHS}, which only uses the partial realisation axiom:

\begin{lem}\label{lem:close_proj_of_orthogonals} Let $(X, \frakS)$ be a hierarchical space. For every $U,V,W\in \frakS$ such that $U\orth V$ and both $\rho^U_W$, $\rho^V_W$ are defined, then $\dist_W(\rho^U_W,\rho^V_W)\le 2E$.
\end{lem}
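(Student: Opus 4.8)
The plan is to deduce this directly from the partial realisation axiom, exactly along the lines of \cite[Lemma 1.5]{DHS}: a partial realisation point for the orthogonal pair $\{U,V\}$ will be $E$--close to both $\rho^U_W$ and $\rho^V_W$ inside $\C W$, and then the triangle inequality in $\C W$ gives the bound $2E$.

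In more detail, I would first dispose of the degenerate case $\frakS=\emptyset$, where there is nothing to prove, and otherwise fix a point of $X$ so that $\pi_U(X)$ and $\pi_V(X)$ are non-empty; choose $p_U\in\pi_U(X)\subseteq\C U$ and $p_V\in\pi_V(X)\subseteq\C V$. Since $U\orth V$, the family $\{U,V\}$ is pairwise orthogonal, so partial realisation (axiom~\ref{item:dfs_partial_realisation}) produces $z\in X$ such that $\dist_U(z,p_U)\le E$, $\dist_V(z,p_V)\le E$, and, crucially, $\dist_{T}(z,\rho^{U}_{T})\le E$ for every $T\in\frakS$ with $U\propnest T$ or $U\transverse T$, and likewise with $U$ replaced by $V$. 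The key observation is then that the hypothesis ``$\rho^U_W$ is defined as a subset of $\C W$'' means precisely that $U\propnest W$ or $U\transverse W$; in either case the clause above yields $\dist_W(\pi_W(z),\rho^U_W)\le E$, and symmetrically $\dist_W(\pi_W(z),\rho^V_W)\le E$. Combining these via the triangle inequality in $\C W$ gives $\dist_W(\rho^U_W,\rho^V_W)\le 2E$.

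I do not expect a genuine obstacle here; the only points requiring minor care are (a) verifying that ``$\rho^{\,\cdot}_W$ is defined'' is equivalent to the disjunction ``$\propnest$ or $\transverse$'', so that the correct clause of the partial realisation axiom applies to each of $U$ and $V$ with target $W$, and (b) ensuring $X$ (hence $\pi_U(X)$ and $\pi_V(X)$) is non-empty so that partial realisation has inputs to act on. One may also note that $U\orth V$ forces $U$ and $V$ to be $\nest$--incomparable, so $\rho^U_W$ and $\rho^V_W$ are coarsely constant subsets of $\C W$ of the same type, which makes the distance $\dist_W(\rho^U_W,\rho^V_W)$ meaningful to begin with.
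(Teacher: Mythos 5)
Your proof is correct and is exactly the paper's argument: the paper proves this lemma by citing \cite[Lemma 1.5]{DHS}, whose proof is precisely the partial realisation argument you give (apply axiom (8) to the orthogonal pair $\{U,V\}$, note that $\rho^U_W$, $\rho^V_W$ being defined means $U,V$ are each properly nested in or transverse to $W$, and conclude by the triangle inequality through $\pi_W(z)$). Nothing further is needed.
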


Combining Lemma~\ref{lem:close_proj_of_orthogonals} and Axiom~\ref{item:dfs_transversal} we get:
\begin{cor}\label{cor:necessariamente_transverse}
    Let $U,V,W\in\frakS$. Suppose that $\dist_V(U,W)$ is well-defined and strictly greater than $2E$. Then $U\transverse W$.
\end{cor}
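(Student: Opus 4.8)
The plan is to rule out every relational alternative to transversality by contradiction. Recall that for any pair of distinct domains $U,W\in\frakS$ exactly one of the following holds: $U=W$, $U\propnest W$, $W\propnest U$, $U\orth W$, or $U\transverse W$. So it suffices to show that the hypothesis $\dist_V(\rho^U_V,\rho^W_V)>2E$ (in particular $U\ne W$, since otherwise the distance would be $0\le 2E$; note also that both $\rho^U_V$ and $\rho^W_V$ must be defined for the quantity to make sense, which already forces $V\ne U$ and $V\ne W$ and rules out $V\propnest U$ or $V\propnest W$) is incompatible with each of the remaining non-transverse cases.

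First I would dispose of the orthogonal case directly: if $U\orth W$, then since both $\rho^U_V$ and $\rho^W_V$ are defined by assumption, Lemma~\ref{lem:close_proj_of_orthogonals} gives $\dist_V(\rho^U_V,\rho^W_V)\le 2E$, contradicting the hypothesis. Next I would handle the nesting cases, which are symmetric, so assume $U\propnest W$. The point is that $\rho^U_V$ and $\rho^W_V$ are forced to be close by the last clause of Axiom~\ref{item:dfs_transversal} (the consistency axiom), which states that if $U\nest V$ (wait---here it is $U\nest W$) then $\dist_T(\rho^U_T,\rho^W_T)\le E$ whenever $T$ satisfies either $W\propnest T$ or $W\transverse T$ with $T\not\orth U$. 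So I need $V$ to fall into one of those two categories relative to $W$: since $\rho^W_V$ is defined, $V$ is either $\propnest$-above $W$ or transverse to $W$; in the latter case I also need $V\not\orth U$, but $V\orth U$ would make $\rho^U_V$ undefined (orthogonal domains are not $\nest$-comparable, and one checks the $\rho$ maps are only specified for nested or transverse pairs), so this holds. Hence $\dist_V(\rho^U_V,\rho^W_V)\le E\le 2E$, again contradicting the hypothesis.

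The only remaining alternative to $U\transverse W$ is therefore excluded, so $U\transverse W$. The main thing to be careful about is the bookkeeping of which $\rho$-maps are defined and which clause of the consistency axiom applies in the nesting case---in particular verifying that $V$ being transverse to $W$ forces $V\not\orth U$ so that the relevant clause is available. None of this involves hyperbolicity of coordinate spaces, so the statement is valid in the generality of hierarchical spaces as claimed.
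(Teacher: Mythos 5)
Your proposal is correct and follows essentially the same route as the paper, which obtains the corollary precisely by combining Lemma~\ref{lem:close_proj_of_orthogonals} (to exclude $U\orth W$) with the last clause of the consistency Axiom~\ref{item:dfs_transversal} (to exclude nesting), the remaining cases being trivial. Your bookkeeping of which $\rho$-maps are defined and why $V\not\orth U$ holds in the transverse subcase is exactly the verification the paper leaves implicit.
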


Moreover, possibly after enlarging the hierarchical constant $E$, we get the following variant of the bounded geodesic image axiom, which is proved by combining the consistency inequalities with the original bounded geodesic image axiom, as in \cite[Proposition 1.13]{HHS_II}:

\begin{lem}[BGI variant]\label{lem:bgi}
Let $(X, \frakS)$ be a hierarchical space. Let $x,y\in X$ and let $U,V\in\frakS$ be such that $U\propnest V$. Then either $\dist_U(x,y)\le E$ or every geodesic $[\pi_V(x), \pi_V(y)]\subseteq \C V$ must pass $E$-close to $\rho^U_V$.
\end{lem}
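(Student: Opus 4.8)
The plan is to combine the bounded geodesic image axiom~\ref{item:dfs:bounded_geodesic_image} with the nested case of the consistency inequalities in axiom~\ref{item:dfs_transversal}, following \cite[Proposition 1.13]{HHS_II}. Throughout I would allow myself to enlarge $E$ a bounded number of times; in particular, after such an enlargement I may assume that for $U\propnest V$ the downward projection $\rho^V_U\colon \C V\to 2^{\C U}$ is $(E,E)$--coarsely Lipschitz, so that the diameter--$\le E$ set $\pi_V(x)$ behaves like a point up to an error controlled by $E$. The one thing to keep in mind is the direction of the $\rho$--maps: $\rho^U_V\subseteq\C V$ is the coarse point attached to the nested pair $U\propnest V$, whereas $\rho^V_U$ goes the opposite way.

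First I would fix $x,y\in X$, a pair $U\propnest V$, and a geodesic $\gamma=[\pi_V(x),\pi_V(y)]$ of $\C V$, with endpoints $p_x\in\pi_V(x)$ and $p_y\in\pi_V(y)$. If $\gamma$ meets $N_E(\rho^U_V)$ there is nothing to prove (with the enlarged constant), so assume it does not. Applying axiom~\ref{item:dfs:bounded_geodesic_image} with its $\nest$--larger domain taken to be $V$ and its nested domain taken to be $U$, the fact that $\gamma$ avoids $N_E(\rho^U_V)$ forces $\diam_{\C U}(\rho^V_U(\gamma))\le E$. Since $p_x,p_y\in\gamma$, this puts $\rho^V_U(p_x)$ and $\rho^V_U(p_y)$ within $E$ of each other in $\C U$, hence, by coarse Lipschitzness of $\rho^V_U$, also $\rho^V_U(\pi_V(x))$ and $\rho^V_U(\pi_V(y))$ within a distance bounded in terms of $E$.

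Next I would feed $z=x$ and then $z=y$ into the nested consistency inequality, again with $V$ in the role of the $\nest$--larger domain. For $z=x$ this reads: either $\dist_V(\pi_V(x),\rho^U_V)\le E$, or $\diam_{\C U}\big(\pi_U(x)\cup\rho^V_U(\pi_V(x))\big)\le E$. The first option is impossible: it would place the endpoint $p_x\in\pi_V(x)$ within $2E$ of $\rho^U_V$, so $\gamma$ would meet $N_{2E}(\rho^U_V)$, against our assumption once the new constant is at least $2E$. Hence $\dist_U\big(\pi_U(x),\rho^V_U(\pi_V(x))\big)\le E$, and symmetrically for $y$. One application of the triangle inequality in $\C U$, together with the previous paragraph, then yields $\dist_U(x,y)\le 3E$ up to the absorbed errors, which is the first alternative of the statement for a suitably enlarged hierarchical constant.

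No step here is deep: the main point of care is the index bookkeeping --- invoking each of the two axioms in exactly the form appropriate to a pair $U\propnest V$ (so that the geodesic lives in $\C V$, not $\C U$), and not confusing the coarse point $\rho^U_V$ with the projection $\rho^V_U$ --- together with checking that every additive loss incurred along the way is bounded purely in terms of $E$, hence is absorbed by the final enlargement of the hierarchical constant.
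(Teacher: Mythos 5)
Your argument is correct and is exactly the route the paper takes: Lemma~\ref{lem:bgi} is obtained by combining the nested consistency inequality of Axiom~\eqref{item:dfs_transversal} with the bounded geodesic image Axiom~\eqref{item:dfs:bounded_geodesic_image}, as in \cite[Proposition 1.13]{HHS_II}, with the enlargement of $E$ absorbing the additive losses. One small remark: the coarse Lipschitzness of $\rho^V_U$ is not among the axioms of Definition~\ref{defn:Hierarchical_space}, but you never actually need it, since $p_x\in\pi_V(x)$ gives $\rho^V_U(p_x)\subseteq \rho^V_U(\gamma)\cap\rho^V_U(\pi_V(x))$, so the BGI diameter bound and the second alternative of the consistency inequality chain together directly to yield $\dist_U(x,y)\le 3E$.
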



\begin{defn}[Consistent tuple]\label{defn:consistent_tuple}
Let $R\geq0$ and let $(b_U)_{U\in\frakS}\in\prod_{U\in\frakS}2^{\C   U}$ be a tuple such that for each $U\in\frakS$, the $U$--coordinate  $b_U$ has diameter $\leq R$.  Then $(b_U)_{U\in\frakS}$ is \emph{$R$--consistent} if for all $V,W\in\frakS$, we have $$\min\{\dist_V(b_V,\rho^W_V),\dist_W(b_W,\rho^V_W)\}\leq R$$
whenever $V\transverse W$ and 
$$\min\{\dist_W(b_W,\rho^V_W),\diam_V(b_V\cup\rho^W_V(b_W))\}\leq R$$
whenever $V\propnest W$.
\end{defn}

Later we will need the following, which is \cite[Theorem~3.1]{HHS_II} (notice that it holds for hierarchically \emph{hyperbolic} spaces):

\begin{thm}[Realisation]\label{thm:realisation}
Let $(X,\frakS)$ be a hierarchically hyperbolic space. Then for each $R\geq1$, there exists $\theta=\theta(R)$ so that,  for any $R$--consistent tuple $(b_U)_{U\in\frakS}$, there exists $x\in X$ such that $\dist_V(x,b_V)\leq\theta$ for all $V\in\frakS$.
\end{thm}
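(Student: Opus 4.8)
The statement is \cite[Theorem~3.1]{HHS_II}, so rather than a full argument I describe the route I would take; the plan is to induct on the complexity $n$ of $(X,\frakS)$. Before starting I would make the harmless reduction that each $\pi_U$ is coarsely surjective --- one may replace $\C U$ by $\pi_U(X)$, which is $E$--quasiconvex, hence itself hyperbolic, without disturbing the axioms in any essential way (every $\rho$--set already lies uniformly close to the relevant projection image, by partial realisation) --- so that every coordinate $b_U$ may be taken to lie in $\pi_U(X)$. The base case $n=1$ is then immediate: $\frakS=\{S\}$, the consistency conditions are vacuous, and any $x\in X$ with $\pi_S(x)$ within $E$ of $b_S$ realises the tuple.

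For the inductive step I would fix a $\nest$--maximal domain $S$ and a basepoint $x_0\in X$, and let $\gamma$ be a geodesic of $\C S$ from $\pi_S(x_0)$ to $b_S$; the $S$--coordinate is what \emph{locates} the realisation point inside $X$. The key observation is a dichotomy coming from the bounded geodesic image variant (Lemma~\ref{lem:bgi}) together with $R$--consistency: for each $U\propnest S$, either $\rho^U_S$ lies $E$--close to $\gamma$, so that $U$ is ``crossed'' on the way to $b_S$, or $\diam_{\C U}(\rho^S_U(\gamma))\le E$, in which case $b_U$ is $O(R+E)$--close to $\rho^S_U(b_S)$ and so is already pinned down, up to bounded error, by $b_S$ alone.

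To assemble the point, observe that the ``crossed'' domains whose $\rho^U_S$ is at distance at least, say, $10E$ from $b_S$ along $\gamma$ are coarsely linearly ordered by position on $\gamma$, and the Behrstock inequality ensures that pushing the $S$--coordinate past $\rho^U_S$ toward $b_S$ leaves untouched the coordinates of domains lying further along; the finitely many remaining ``crossed'' domains have $\rho^U_S$ near $b_S$ itself. For each such near-$b_S$ sub-domain $U$ the restricted tuple $(b_V)_{V\nest U}$ is again $R$--consistent, now over the smaller index set $\frakS_U$, whose complexity is $<n$, so I would invoke the inductive hypothesis inside the corresponding product region to realise it; combining these partial realisations via the partial realisation axiom (for the pairwise-orthogonal families among them) and the gate maps, and then running a finite descent along the order above --- at each step checking, via the Behrstock inequality and the large links axiom, that the correction only moves coordinates of domains ``further from $b_S$'' --- produces a single $x\in X$ whose coordinates are all within some $\theta=\theta(R)$ of the respective $b_U$.

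I expect the main obstacle to be precisely this last part: showing that \emph{all} coordinates can be realised simultaneously, so that successive corrections do not spoil one another. Finite complexity is what forces the descent to terminate, and the joint use of the Behrstock inequality, bounded geodesic image, and large links is what pins down which coordinates each correction is allowed to move; this is also where hyperbolicity of the coordinate spaces enters --- through running bounded geodesic image along honest geodesics of $\C S$ --- which is why the statement is for hierarchically \emph{hyperbolic}, not merely hierarchical, spaces.
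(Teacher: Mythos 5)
There is nothing in the paper to match your argument against: the paper does not prove this statement at all, it imports it verbatim as \cite[Theorem~3.1]{HHS_II}. So the only meaningful comparison is with the proof given there, and measured against that your sketch has a genuine gap, which you yourself flag: the entire content of the theorem is deferred to the final ``descent with successive corrections'' step, and no argument is offered that it works. Concretely, two things go wrong as written. First, the basepoint-and-geodesic organization is misdirected. By consistency of the tuple, any $U\propnest S$ with $\rho^U_S$ farther than $R$ from $b_S$ already has $b_U$ coarsely equal to $\rho^S_U(b_S)$; and by consistency of points together with Lemma~\ref{lem:bgi}, \emph{any} $x\in X$ whose $S$--coordinate lands near $b_S$ automatically has $\pi_U(x)$ near $\rho^S_U(b_S)\approx b_U$ for all such $U$. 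So the domains ``crossed'' by $\gamma$ away from $b_S$ need no correction at all, while the whole difficulty is concentrated in the domains whose $\rho^U_S$ lies near $b_S$ --- exactly the ones your sketch treats most briefly. Second, the assembly you propose for those domains does not go through as stated: the $\nest$--maximal such domains need not be pairwise orthogonal (they can be transverse to one another), and the partial realisation axiom only applies to pairwise orthogonal families; handling the mutually transverse ones is a genuine step, not a formality. Moreover, a correction process with one step per crossed domain along $\gamma$ cannot by itself yield $\theta=\theta(R)$: the number of such domains is controlled by $\dist_S(x_0,b_S)$, which depends on the tuple and not on $R$, so unless each correction is error-free the estimates accumulate.

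For comparison, the proof in \cite{HHS_II} is a one-shot construction with no basepoint and no ambient geodesic: one isolates the domains where $b_U$ is far from $\rho^S_U(b_S)$ (whose $\rho$--points all lie near $b_S$ by consistency), realises the restricted tuples $(b_V)_{V\nest U}$ in the lower-complexity index sets $\frakS_U$ by induction, combines these using partial realisation for appropriate orthogonal families, and then verifies all coordinates of the resulting point simultaneously via the consistency inequalities, the Behrstock inequality, and bounded geodesic image --- precisely so that no iterative correction (and hence no termination or error-propagation argument) is ever needed. Your list of ingredients is the right one, but the step you call ``the main obstacle'' is the theorem, and the route you propose for it fails on the points above.
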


\noindent Observe that the uniqueness axiom (Definition~\eqref{item:dfs_uniqueness}) implies that the \emph{realisation point} $x$ for $(b_U)_{U\in\frakS}$, provided by Theorem~\ref{thm:realisation}, is coarsely unique.

\begin{defn}[Automorphism]\label{defn:auto}
Let $({X},\frakS)$ be a hierarchical space. An \emph{automorphism} consists of a map $g:\,{X}\to {X}$, a bijection $g^\sharp:\, \frakS\to \frakS$ preserving nesting and orthogonality, and, for each $U\in\frakS$, an isometry $g^\diamond(U):\,\C  U\to \C (g^\sharp(U))$ for which the following two diagrams commute for all $U,V\in\frakS$ such that $U\propnest V$ or $U\transverse V$:
$$\begin{tikzcd}
{X}\ar{r}{g}\ar{d}{\pi_U}&{X}\ar{d}{\pi_{g^\sharp (U)}}\\
\C  U\ar{r}{g^\diamond (U)}&\C  (g^\sharp (U))\\
\end{tikzcd}$$
and
$$\begin{tikzcd}
\C  U\ar{r}{g^\diamond (U)}\ar{d}{\rho^U_V}&\C  (g^\sharp (U))\ar{d}{\rho^{g^\sharp (U)}_{g^\sharp (V)}}\\
\C  V\ar{r}{g^\diamond (V)}&\C  (g^\sharp (V))\\
\end{tikzcd}$$
Whenever it will not cause ambiguity, we will abuse notation by dropping the superscripts and just calling all maps $g$.
\end{defn}

We say that two automorphisms $g,g'$ are \emph{equivalent}, and we write $g\sim g'$, if $g^\sharp=(g')^\sharp$ and $g^\diamond(U)=(g')^\diamond(U)$ for each $U\in\frakS$.  Given an automorphism $g$, a quasi-inverse $\ov{g}$ for $g$ is an automorphism with $\ov{g}^\sharp=(g^\sharp)^{-1}$ and such that, for every $U\in \frakS$, $\ov{g}^\diamond(U)=g^\diamond(U)^{-1}$. Since the composition of two automorphisms is an automorphism, the set of equivalence classes of automorphisms forms a group, denoted $\text{Aut}(\frakS)$.

\begin{defn}\label{defn:action_on_hhs}
    A finitely generated group $G$ \emph{acts} on a hierarchical space $({X},\frakS)$ by automorphisms if there is a group homomorphism $G\to \text{Aut}(\frakS)$. Notice that this induces a $G$-action on $X$ by uniform quasi-isometries.
\end{defn}

\begin{defn}[(Relative) HHG]\label{def:rel_HHG}
If a group $G$ acts on a (relative) HHS $(X,\frakS)$, in such a way that the action on $X$ is metrically proper and cobounded and the action on $\frakS$ is cofinite, then $G$ is called a \emph{(relative) hierarchically hyperbolic group}, or (relative) HHG for short. Any quasi-isometry between $G$ and $X$ given by the Milnor-\v{S}varc Lemma endows $G$ with the (relative) HHS structure of $X$ (possibly with a larger hierarchical constant). 
\end{defn}


\section{Detecting an amalgamated free product in a HHG}\label{sec:amalgamation}
We are now ready to prove Theorem~\ref{thmintro:amalgamation} from the introduction, in the following extended form.
\begin{thm}\label{thm:amalgamation}
    Let $G$ be a group acting on a hierarchical space $(X,\frakS)$, and fix a basepoint $x_0\in X$.  Let $A_1,\ldots,A_n\le G$ be subgroups, and let $C$ be a subset of the intersection $\bigcap_{l=1}^n A_l$. Suppose that there exists a constant $M\ge 100E$, where $E$ is a hierarchical constant for $X$, and a domain $Y_a\in \frakS$ for every $a\in (\bigcup_i A_i)-C$, such that the following hold:
    \begin{enumerate}[label=(\Roman*)]
        \item \label{hyp::1} $\max\left\{\diam_{Y_a}(Cx_0), \diam_{Y_a}(aCx_0)\right\}\le M/10$;
        \item \label{hyp::4} $\dist_{Y_a}(Cx_0, aCx_0)\ge M$; 
        \item \label{hyp::2} If $a\in A_i-C$ and $b\in A_j-C$ where $i\neq j$, $Y_a\transverse aY_{b}$;
        \item \label{hyp::3} In the same setting, $\dist_{Y_a}(Cx_0, bCx_0)\le M/10$.
    \end{enumerate}
    Then $C$ coincides with the pairwise intersections $A_i\cap A_j$ for every $i\neq j$, and in particular $C = \bigcap_{l=1}^n A_l$. Moreover, the natural map $$\Asterisk_{C} A_\bullet:= A_1 *_C \ldots *_C A_n\to \langle A_1,\ldots,A_n\rangle_G$$
    is an isomorphism. 
\end{thm}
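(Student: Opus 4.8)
The plan is to use the standard ping-pong style argument adapted to the normal form of an amalgamated product, where the "tables" are the coordinate spaces $\C Y_a$ and the large projections recorded by hypotheses \ref{hyp::1}--\ref{hyp::4} play the role of disjoint ping-pong sets. Concretely, recall that an element of $\Asterisk_C A_\bullet$ which is not in any single factor can be written in (non-unique, but controlled) normal form $g_1 c_1 g_2 c_2 \cdots g_k$ with each $g_s \in A_{i_s}\setminus C$, consecutive $i_s$ distinct, and $c_s \in C$; equivalently, after absorbing the $c_s$'s, as a product $a_1 a_2 \cdots a_k$ with $a_s \in A_{i_s}\setminus C$ and $i_s \ne i_{s+1}$. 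To show the natural map to $\langle A_1,\dots,A_n\rangle_G$ is injective, it suffices to show that no such product represents the identity of $G$, and for this I would exhibit a domain $U$ (depending on the word) with $\dist_U(x_0, a_1\cdots a_k x_0) \ge M/2 > 0$, say, so that $a_1\cdots a_k$ moves $x_0$ and hence is nontrivial.

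First I would set up the key estimate: given a reduced word $w = a_1\cdots a_k$ as above, define the domains $U_s := (a_1\cdots a_{s-1}) Y_{a_s}$ for $s = 1,\dots,k$ (with $U_1 = Y_{a_1}$), which are the translates of the witnessing domains along the word. The heart of the argument is to show that consecutive domains $U_s, U_{s+1}$ are transverse and that the projection $\rho^{U_{s+1}}_{U_s}$ lands close to $\pi_{U_s}(a_1\cdots a_{s-1} x_0)$ — this is exactly what hypotheses \ref{hyp::2} and \ref{hyp::3} buy us after translating by $a_1\cdots a_{s-1}$: since $a_s \in A_{i_s}\setminus C$ and $a_{s+1}\in A_{i_{s+1}}\setminus C$ with $i_s \ne i_{s+1}$, we have $Y_{a_s} \transverse a_s Y_{a_{s+1}}$ and $\dist_{Y_{a_s}}(Cx_0, a_{s+1}Cx_0) \le M/10$, and applying the automorphism $a_1\cdots a_{s-1}$ (using the commuting diagrams in Definition~\ref{defn:auto}) gives $U_s \transverse U_{s+1}$ together with $\dist_{U_s}(\pi_{U_s}(a_1\cdots a_{s-1}x_0),\, \rho^{U_{s+1}}_{U_s}) \le M/10 + 2E$ or so, because $\rho^{U_{s+1}}_{U_s}$ is coarsely the image of $\pi_{U_s}(a_1\cdots a_s C x_0)$. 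Meanwhile \ref{hyp::1} and \ref{hyp::4}, again translated, say that on each $U_s$ the word "makes progress": $\dist_{U_s}(\pi_{U_s}(a_1\cdots a_{s-1}x_0), \pi_{U_s}(a_1\cdots a_s x_0)) \ge M$ while $Cx_0$-translates have diameter $\le M/10$ there.

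With these in hand, I would run the Behrstock inequality / consistency telescoping argument: one shows inductively that $\pi_{U_s}(a_1 \cdots a_k x_0)$ stays within $O(E)$ of $\pi_{U_s}(a_1\cdots a_s x_0)$ (the endpoint projection is "pinned" by the last domain that sees large progress), and symmetrically that $\pi_{U_s}(x_0)$ stays within $O(E)$ of $\pi_{U_s}(a_1\cdots a_{s-1}x_0)$. The point is that for $t > s$, the domain $U_t$ projects onto $U_s$ near $\rho^{U_{s+1}}_{U_s}$ (using the last displayed inequality in Axiom~\ref{item:dfs_transversal}, the one about $\rho^U_W$ versus $\rho^V_W$ when $U\nest V$, together with the Behrstock inequality and the fact that successive $\rho$'s are close to the basepoint projections), which is far (distance $\ge M - M/10 - O(E) > M/2$) from $\pi_{U_s}(a_1\cdots a_s x_0)$; hence the Behrstock inequality forces $\pi_{U_t}(a_1\cdots a_k x_0)$ to be close to $\rho^{U_{s}}_{U_t}$... — more cleanly, one verifies that the tuple whose $U_s$-coordinate is $\pi_{U_s}(a_1\cdots a_s x_0)$ and which is arbitrary elsewhere is $O(E)$-consistent, invokes Realisation (Theorem~\ref{thm:realisation}, valid here if $X$ is an HHS; for the general hierarchical case one argues directly with Behrstock), and concludes $\dist_{U_1}(x_0, a_1\cdots a_k x_0) \ge M - O(E) > 0$. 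This gives injectivity. For the "$C$ coincides with $A_i \cap A_j$" part: if $g \in A_i \cap A_j$ with $i \ne j$ and $g \notin C$, then $g = g$ viewed in $A_i$ and $g = g$ in $A_j$ gives a length-two reduced relation $g \cdot g^{-1} = 1$ in the free product that is nontrivial unless... — more directly, $g\in A_i\setminus C$ means $Y_g$ is defined with $\dist_{Y_g}(Cx_0, gCx_0)\ge M$, but also $g\in A_j\setminus C$; taking $a = g \in A_i \setminus C$ and $b = g^{-1}$... I would instead note that if $C \subsetneq A_i \cap A_j$ we could pick $a \in (A_i\cap A_j)\setminus C$ and then $a \in A_i\setminus C$ and $a^{-1}\in A_j \setminus C$ give, via \ref{hyp::2}, $Y_a \transverse a Y_{a^{-1}}$, while $a Y_{a^{-1}} = \overline{(a^{-1})^{-1} Y_{a^{-1}}}$ relates to $Y_a$ in a way contradicting \ref{hyp::1}--\ref{hyp::4}; this case-check is short and I would place it after the main telescoping lemma. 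Once $C = A_i \cap A_j$ for all $i\ne j$, standard amalgam theory (or directly, the subgroup generated is a quotient of $\Asterisk_C A_\bullet$ and we have shown the quotient map is injective) gives $\bigcap_l A_l = C$ and the isomorphism.

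The main obstacle I expect is the bookkeeping in the telescoping step — specifically, proving rigorously that the far-apart projections propagate all the way from $U_1$ to $U_k$ without the errors accumulating to swamp the $M/10$ gaps. This is where the constants $M \ge 100 E$ and the $M/10$ thresholds are calibrated, and one must be careful that the "container" and orthogonality axioms don't introduce extra domains that see the word differently; the clean way around this is to reduce everything to checking $O(E)$-consistency of an explicit tuple and then cite Realisation plus Uniqueness, so that the combinatorics of which domains are transverse/nested is handled uniformly rather than case by case. A secondary subtlety is that $G$ acts only by automorphisms (hence on $X$ by uniform quasi-isometries, not isometries), so one must phrase the progress estimates purely in terms of the coordinate projections $\pi_U$ and the $\rho$-maps, where the automorphism action is by genuine isometries, and only at the very end translate "$x_0$ is moved in some $\C U$" into "the group element is nontrivial" via properness of the action.
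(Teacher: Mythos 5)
Your set-up coincides with the paper's: the translated domains $U_s=(a_1\cdots a_{s-1})Y_{a_s}$, consecutive transversality from hypothesis~\ref{hyp::2}, and the $M/10$ bound from hypothesis~\ref{hyp::3} pinning the "entry/exit" projections. But the telescoping step, which you correctly identify as the heart of the matter, is not closed by either of the two routes you offer. The Behrstock route needs, before anything can be pinned, that \emph{non-consecutive} domains $U_s, U_t$ are transverse: otherwise $\rho^{U_t}_{U_s}$ need not even be defined and the Behrstock inequality does not apply, and a priori two distant translates could be orthogonal or nested. Nothing in your sketch rules this out; the paper devotes a separate induction to exactly this point (Lemma~\ref{lemma:alex_lemma}, resting on Corollary~\ref{cor:necessariamente_transverse}: a gap $>2E$ between $\rho^{U_{s-1}}_{U_s}$ and $\rho^{U_{s+1}}_{U_s}$ forces transversality of the outer pair), and only afterwards can it pin the projections of later cosets near $\rho^{U_{s+1}}_{U_s}$ (Claim~\ref{claim:W1Wk}, Lemma~\ref{lem:Cj_close_in_W_i}). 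The axiom clause you cite for this (the consistency inequality comparing $\rho^U_W$ and $\rho^V_W$ when $U\nest V$) concerns nested pairs and is not available here, since no nesting among the $U_s$ is known or expected. Your "cleaner" alternative via an $O(E)$-consistent tuple and Theorem~\ref{thm:realisation} fails twice over: realisation requires hyperbolic coordinate spaces, while Theorem~\ref{thm:amalgamation} is stated for arbitrary hierarchical spaces (you note this but then defer back to the unwritten Behrstock argument); and, even in the HHS case, producing \emph{some} point realising your tuple says nothing about where the honest orbit point $wx_0$ projects --- the quantity to bound below is $\dist_U(x_0,wx_0)$, and that again requires exactly the transversality-plus-Behrstock propagation you have not carried out. (That the paper reads off the lower bound in the last domain $W_k$ rather than the first is cosmetic.)

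The claim $C=A_i\cap A_j$ is also left hanging: your attempt with $a$ and $a^{-1}$ and hypothesis~\ref{hyp::2} does not visibly produce a contradiction (transversality of $Y_a$ and $aY_{a^{-1}}$ is not by itself contradictory), and the sketch trails off. The intended argument is one line and uses a different hypothesis: if $a\in(A_i\cap A_j)-C$ with $i\neq j$, apply hypothesis~\ref{hyp::3} with $b=a$ (viewing $a$ once as an element of $A_i-C$ and once of $A_j-C$) to get $\dist_{Y_a}(Cx_0,aCx_0)\le M/10$, contradicting hypothesis~\ref{hyp::4}. This part is minor, but as written it is not a proof.
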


\begin{rem}[Outline] Given a word $w=g_1\ldots g_k c\in \Asterisk_{C} A_\bullet-\{1\}$, we shall show that one can find a collection of pairwise transverse domains $W_1,\ldots, W_k$, one for every subword of $w$ of the form $g_1\ldots g_j$, such that the projection of $g_1\ldots g_j Cx_0$ on $W_i$ only changes when one passes from $g_1\ldots g_{i-1} Cx_0$ to $g_1\ldots g_{i} Cx_0$. In a way, the domain $W_i$ detects “the $i$-th step of $w$”, and as a consequence of assumption~\ref{hyp::3} all subwords $g_1\ldots g_j$ with $j\ge i+1$ cannot undo the translation on $W_i$. In particular, one gets that $\dist_{W_k}(Cx_0, wCx_0)$ is greater than some positive constant, depending on $M$ and $E$, and therefore $w$ is non-trivial in $G$ as it acts non-trivially on $(X,\frakS)$. 
\end{rem}

We now move to the proof of Theorem~\ref{thm:amalgamation}. First notice that, whenever $i\neq j$, $C=A_i\cap A_j$. Indeed $C\subseteq A_i\cap A_j$, and if $a\in (A_i\cap A_j)-C$ then by assumption~\ref{hyp::3} applied to $a=b$ we would get that $\dist_{Y_a}(Cx_0, aCx_0)\le M/10$, in contrast with assumption~\ref{hyp::4}.
\par\medskip

Now, we want to prove that the natural epimorphism $\Asterisk_{C} A_\bullet\to \langle A_1,\ldots,A_n\rangle_G$ is injective. In other words, given any non-trivial word $w= g_1\ldots g_k c\in \Asterisk_{C} A_\bullet-\{1\}$, where  $c\in C$ and every two consecutive $g_i$ and $g_{i+1}$ belong to different factors, we have to show that $w\neq_G 1$. This is clearly true if $k\le 2$, so we focus on the case $k\ge3$. 

\begin{notation}\label{notation:C_iW_i}
    For every $i=1,\ldots, k$ let $Y_i=Y_{g_i}$, and set 
    $$C_i=g_1\ldots g_{i-1}Cx_0,\quad W_i=g_1\ldots g_{i-1}Y_i,$$ so that
    $$\dist_{W_i}(C_i,C_{i+1})= \dist_{Y_i}(Cx_0,g_iCx_0)\ge M.$$
\end{notation}
    We break the rest of the proof of Theorem~\ref{thm:amalgamation} into a series of Claims. 

    \begin{claim}\label{claim:transverse}
        $W_i\transverse W_{i+1}$ for every $i=1,\ldots, k-1$.
    \end{claim}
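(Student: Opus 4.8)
The plan is to apply Corollary~\ref{cor:necessariamente_transverse}: it suffices to exhibit a domain $Z$ such that $\dist_Z(W_i, W_{i+1}) > 2E$, since then $W_i \transverse W_{i+1}$ automatically. The natural candidate is $Z = W_{i+1}$ itself (or, symmetrically, $W_i$), so I would try to bound $\dist_{W_{i+1}}(\rho^{W_i}_{W_{i+1}}, W_{i+1})$ — wait, that is not quite a ``distance between two domains in a third''. Let me instead aim directly: I want to show that one of $W_i, W_{i+1}$ projects far inside the other. Concretely, I would show $\dist_{W_{i+1}}(\pi_{W_{i+1}}(C_{i+1}), \rho^{W_i}_{W_{i+1}})$ is large, which, combined with the fact that $\pi_{W_{i+1}}(C_{i+1})$ is close to $\pi_{W_{i+1}}(C_i)$ would be the wrong direction; rather the cleanest route is: translate everything by $(g_1\cdots g_{i-1})^{-1}$ so that $W_i$ becomes $Y_i = Y_{g_i}$, $W_{i+1}$ becomes $g_i Y_{g_{i+1}}$, $C_i$ becomes $Cx_0$, $C_{i+1}$ becomes $g_i Cx_0$, and $C_{i+2}$ becomes $g_i g_{i+1} Cx_0$. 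Since $g_i$ and $g_{i+1}$ lie in different factors $A_r$ and $A_s$, Hypothesis~\ref{hyp::2} gives $Y_{g_i} \transverse g_i Y_{g_{i+1}}$ directly, which is exactly what we want. So in fact the claim is essentially a restatement of Hypothesis~\ref{hyp::2} after applying the group element $g_1\cdots g_{i-1}$, which acts on $(X,\frakS)$ by an automorphism and hence preserves the transversality relation.

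More carefully, the key steps are: (1) record that $g_1\cdots g_{i-1} \in G$ acts as an automorphism of $(X,\frakS)$, so $g^\sharp$ preserves nesting, orthogonality, and hence transversality; (2) apply Hypothesis~\ref{hyp::2} with $a = g_i \in A_r - C$ and $b = g_{i+1} \in A_s - C$ (legitimate since consecutive letters are in distinct factors, so $r \neq s$, and each $g_j \notin C$ by the normal-form convention) to get $Y_{g_i} \transverse g_i Y_{g_{i+1}}$; (3) apply the automorphism $g_1 \cdots g_{i-1}$ to both sides of this transversality: since $W_i = g_1\cdots g_{i-1} Y_{g_i}$ and $W_{i+1} = g_1\cdots g_{i-1} g_i Y_{g_{i+1}} = (g_1\cdots g_{i-1}) \cdot (g_i Y_{g_{i+1}})$, we conclude $W_i \transverse W_{i+1}$.

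The only subtlety — and it is minor — is checking the hypotheses of~\ref{hyp::2} are genuinely met by $g_i, g_{i+1}$: that each is outside $C$ and that they lie in different factors. Both are immediate from the definition of a reduced word $w = g_1\cdots g_k c$ in $\Asterisk_C A_\bullet$. There is no real obstacle here; the content of the claim is entirely front-loaded into Hypothesis~\ref{hyp::2}, and the work is just translating by $g_1\cdots g_{i-1}$ and invoking that automorphisms of a hierarchical space preserve $\transverse$. (Alternatively, if one prefers not to invoke equivariance of $\transverse$ as a black box, one can rerun the argument of Corollary~\ref{cor:necessariamente_transverse} directly: Hypotheses~\ref{hyp::1} and~\ref{hyp::4} give $\dist_{Y_{g_i}}(Cx_0, g_i Cx_0) \geq M > 2E$ and similarly for the translated data, and then Corollary~\ref{cor:necessariamente_transverse} yields transversality of the relevant pair — but the equivariance route is shorter.)
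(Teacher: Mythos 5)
Your main argument is exactly the paper's proof: apply Hypothesis~\ref{hyp::2} to $g_i$ and $g_{i+1}$ (which lie in different factors and outside $C$) to get $Y_{g_i}\transverse g_iY_{g_{i+1}}$, then translate by $g_1\cdots g_{i-1}$, using that the $G$-action by automorphisms preserves transversality. This is correct and essentially identical to the paper; only your final parenthetical alternative via Corollary~\ref{cor:necessariamente_transverse} is not actually a workable substitute (it would require a third domain seeing both $Y_{g_i}$ and $g_iY_{g_{i+1}}$ far apart, which the hypotheses do not directly provide), but it is inessential to your proof.
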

    \begin{proof}[Proof of Claim~\ref{claim:transverse}]
    Notice that $Y_i$ and $g_iY_{i+1}$ are transverse by assumption~\ref{hyp::2}, since $g_i$ and $g_{i+1}$ lie in different factors of the amalgamation. Therefore, the domains $W_i=(g_1\ldots g_{i-1})Y_i$ and $W_{i+1}=(g_1\ldots g_{i-1})g_i Y_{i+1}$ must be transverse as well, since the $G$-action preserves transversality. 
    \end{proof}

    \begin{claim}\label{claim:projection of adjacent Wi}
        For every $i=2, \ldots, k$, $\dist_{W_i}(W_{i-1}, C_i)\le M/10+E$. 
        
        Symmetrically, for every $i=1, \ldots, k-1$, $\dist_{W_i}(W_{i+1}, C_{i+1})\le M/10+E$.
    \end{claim}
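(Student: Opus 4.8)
The plan is to derive both inequalities from the Behrstock inequality applied to the transverse pair produced in Claim~\ref{claim:transverse}. The first move is to pass to a single coordinate space: the automorphism $(g_1\cdots g_{i-1})^{-1}$ sends $W_i$ to $Y_{g_i}$, $C_i$ to $Cx_0$, $C_{i+1}$ to $g_iCx_0$, $W_{i-1}$ to $g_{i-1}^{-1}Y_{g_{i-1}}$ and $W_{i+1}$ to $g_iY_{g_{i+1}}$, so that the two inequalities become
$$\dist_{Y_{g_i}}(g_{i-1}^{-1}Y_{g_{i-1}},\,Cx_0)\le M/10+E\qquad\text{and}\qquad \dist_{Y_{g_i}}(g_iY_{g_{i+1}},\,g_iCx_0)\le M/10+E,$$
with the two domains appearing in each of them transverse (Claim~\ref{claim:transverse}, after translating). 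I would also record the following consequence of applying the automorphism $a^{-1}$ to hypotheses~\ref{hyp::1} and~\ref{hyp::4}: for every $a\in(\bigcup_l A_l)-C$ the domain $a^{-1}Y_a$ witnesses the same jump ``from the other side'', i.e.\ $\diam_{a^{-1}Y_a}(Cx_0)\le M/10$, $\diam_{a^{-1}Y_a}(a^{-1}Cx_0)\le M/10$ and $\dist_{a^{-1}Y_a}(Cx_0, a^{-1}Cx_0)\ge M$.

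The mechanism is then uniform. To bound $\dist_{W_i}(W_{i-1},C_i)$, pick a point $z$ of the relevant $C$-orbit, coarsely realising the distance of that orbit to $W_i$, and apply the Behrstock inequality for $W_{i-1}\transverse W_i$ at $z$, getting $\min\{\dist_{W_i}(z,\rho^{W_{i-1}}_{W_i}),\ \dist_{W_{i-1}}(z,\rho^{W_i}_{W_{i-1}})\}\le E$. Since $\pi_{W_i}(z)\subseteq\pi_{W_i}(C_i)$ and $\diam_{W_i}(C_i)\le M/10$ by~\ref{hyp::1}, it suffices to show that the complementary term exceeds $E$: this pins $\rho^{W_{i-1}}_{W_i}$ within $E$ of $\pi_{W_i}(z)$, hence within $M/10+E$ of $\pi_{W_i}(C_i)$. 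The second inequality is handled identically, with $W_i\transverse W_{i+1}$ and an orbit among $C_{i+1}, C_{i+2}$ in place of $C_i$. The set-valued nature of the projections is harmless here: the only sets fed into the triangle inequalities are the $\rho$'s, of diameter $\le E$, and the images $\pi_{W_i}(C_i),\pi_{W_i}(C_{i+1})$, of diameter $\le M/10$ by~\ref{hyp::1}, so the (a priori unbounded) projections of arbitrary $C$-orbits never get compared with anything.

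The real content, and what I expect to be the main obstacle, is the ``complementary term is large'' step, because the lower bounds it requires turn out to be dual to the very inequalities one is trying to prove: for instance the bound $\dist_{W_{i+1}}(C_{i+1},W_i)>E$ needed for the second inequality at index $i$ concerns the same quantity that the first inequality at index $i+1$ bounds from above, so a blunt induction merely runs in circles. Breaking this symmetry is precisely where hypotheses~\ref{hyp::4} and~\ref{hyp::3} are used together: \ref{hyp::4} forces $\pi_{W_{i-1}}(C_{i-1})$ and $\pi_{W_{i-1}}(C_i)$ to be at least $M$ apart, while~\ref{hyp::3}, once translated, keeps the images in $W_{i-1}$ (resp.\ $W_i$) of the neighbouring $C$-orbits within $M/10$ of $\pi_{W_{i-1}}(C_{i-1})$ (resp.\ $\pi_{W_i}(C_i)$); together these imply that whatever projects $M/10$-close to the former marker sits at distance $\ge 9M/10-O(E)$ from the relevant $\rho$, which is $>E$ once $M\ge 100E$. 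Concretely, I would first isolate the self-standing auxiliary estimate that $\dist_{Y_a}(aY_b, aCx_0)\le M/10+E$ whenever $a\in A_l-C$, $b\in A_{l'}-C$ and $l\ne l'$ — running the Behrstock inequality for $Y_a\transverse aY_b$ at a suitable point mapping into $aCx_0$, and using~\ref{hyp::1},~\ref{hyp::4},~\ref{hyp::3} together with the observation above to discard the unwanted alternative — and then read off both inequalities of the Claim from it by translating and one more application of the Behrstock inequality; organising the argument in this order is what removes the circularity, and I expect getting that order exactly right to be the delicate point.
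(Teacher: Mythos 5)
Your preparatory steps are fine (the translation to a single coordinate space, the observation that $a^{-1}Y_a$ satisfies the translated forms of \ref{hyp::1} and \ref{hyp::4}, and the reduction of the first inequality to your auxiliary estimate via one more Behrstock application together with \ref{hyp::3}), but the engine of the argument does not run. In both the direct mechanism and the auxiliary estimate you insist on forcing one specific branch of the Behrstock alternative: you want every point $z$ of the relevant orbit ($C_i$, resp.\ $aCx_0$) to satisfy $\dist_{W_{i-1}}(z,\rho^{W_i}_{W_{i-1}})>E$ (resp.\ its analogue for $Y_a\transverse aY_b$). Nothing in \ref{hyp::1}--\ref{hyp::3} rules out the opposite configuration: the hypotheses constrain projections of the orbits, not the positions of the $\rho$'s, and they are perfectly compatible with $\rho^{W_i}_{W_{i-1}}$ sitting next to $\pi_{W_{i-1}}(C_i)$ rather than next to $\pi_{W_{i-1}}(C_{i-1})$. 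Excluding that configuration is, up to constants, the inequality of the Claim one index over --- the circularity you yourself flagged --- and your reorganisation does not break it: the ``unwanted alternative'' in the auxiliary estimate (all of $aCx_0$ projecting $E$-close to $\rho^{Y_a}_{aY_b}$ in $\C (aY_b)$) is exactly what Behrstock \emph{forces} whenever $\rho^{aY_b}_{Y_a}$ lies near $\pi_{Y_a}(Cx_0)$, so no combination of \ref{hyp::1}, \ref{hyp::4}, \ref{hyp::3} and your recorded fact can discard it. A tell-tale sign that the approach overreaches: if your mechanism worked it would prove the bound $E$, whereas the true bound carries the $M/10$, which is not slack but precisely the price of the branch you are trying to exclude.

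The paper keeps both branches. Since $\dist_{W_{i-1}}(C_{i-1},C_i)\ge M\ge 4E$ by \ref{hyp::4}, at least one of $C_{i-1}$, $C_i$ lies entirely more than $E$-far from $\rho^{W_i}_{W_{i-1}}$ in $\C W_{i-1}$, hence by Behrstock projects entirely $E$-close to $\rho^{W_{i-1}}_{W_i}$ in $\C W_i$. If it is $C_i$, the claim follows at once; if it is $C_{i-1}$, then \ref{hyp::3} (with $a=g_i$, $b=g_{i-1}^{-1}$, after translating) gives $\dist_{W_i}(C_{i-1},C_i)\le M/10$, whence $\dist_{W_i}(W_{i-1},C_i)\le M/10+E$. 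So the configuration you try to rule out is simply absorbed by \ref{hyp::3}, and that is the only place the $M/10$ enters. (Your instinct that the second inequality is the delicate one is not misplaced: the paper's ``analogously'' is the same dichotomy one index up, and in its bad branch one needs $\dist_{W_i}(C_{i+1},C_{i+2})\le M/10$, i.e.\ \ref{hyp::3} applied after translating by $g_i^{-1}$, to the domain $g_i^{-1}Y_{g_i}$; but the remedy is the two-case argument, not the exclusion of a case.)
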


    \begin{proof}
        We only prove the first statement, as the second follows analogously. As $\dist_{W_{i-1}}(C_{i-1}, C_i)\ge M\ge 4E$, by the Behrstock inequality one of the following happens:
        \begin{itemize}
            \item Every point of $\pi_{W_i}(C_i)$ is $E$-close to $\rho^{W_{i-1}}_{W_i}$. Then the conclusion clearly follows.
            \item Every point of $\pi_{W_i}(C_{i-1})$ is $E$-close to $\rho^{W_{i-1}}_{W_i}$. Then we have that
            $$\dist_{W_i}(C_i, W_{i-1})\le \dist_{W_i}(C_i, C_{i-1})+E\le M/10+E,$$
            where we used Assumption~\ref{hyp::3}. 
        \end{itemize}
    \end{proof}

    \begin{claim}\label{claim:distance_of_consecutives}
        For every $i=2, \ldots, k-1$, $\dist_{W_i}(W_{i-1}, W_{i+1})\ge 4/5M-4E>6E$.  
    \end{claim}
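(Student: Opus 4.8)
The plan is to combine the triangle inequality for projections to $\C W_i$ with the two bounds established in Claim~\ref{claim:projection of adjacent Wi} and the defining lower bound from Notation~\ref{notation:C_iW_i}. Concretely, I would write
$$\dist_{W_i}(W_{i-1},W_{i+1})\ge \dist_{W_i}(C_i,C_{i+1})-\dist_{W_i}(W_{i-1},C_i)-\dist_{W_i}(C_{i+1},W_{i+1}),$$
where the middle term is controlled by the first statement of Claim~\ref{claim:projection of adjacent Wi} (applied at index $i$), and the last term by the second statement of Claim~\ref{claim:projection of adjacent Wi} (again at index $i$, using $C_{i+1}$). Since $\dist_{W_i}(C_i,C_{i+1})\ge M$ by Notation~\ref{notation:C_iW_i}, each of the two subtracted terms is at most $M/10+E$, so the right-hand side is at least $M-2(M/10+E)=\tfrac{4}{5}M-2E\ge \tfrac45 M-4E$, which gives the stated inequality. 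Finally, $\tfrac45 M-4E>6E$ follows from the standing hypothesis $M\ge 100E$ (indeed $\tfrac45\cdot 100E-4E=76E>6E$), so the last inequality is immediate; one just has to remember to invoke $M\ge 100E$.

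The only subtlety is making sure the two halves of Claim~\ref{claim:projection of adjacent Wi} are applied with the correct indices and to the correct cosets: the first half bounds $\dist_{W_i}(W_{i-1},C_i)$, and the second half, read at index $i$, bounds $\dist_{W_i}(W_{i+1},C_{i+1})$ — both of which are exactly the error terms appearing in the triangle inequality above. Note this also implicitly uses that all the projections $\pi_{W_i}(C_i)$, $\pi_{W_i}(C_{i+1})$ and $\rho^{W_{i-1}}_{W_i}$, $\rho^{W_{i+1}}_{W_i}$ are defined, which holds because $W_{i-1}\transverse W_i\transverse W_{i+1}$ by Claim~\ref{claim:transverse}, so the relative projections exist. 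I do not anticipate any real obstacle here; this is a routine bookkeeping step, and the point of isolating it as a claim is simply that the lower bound $\dist_{W_i}(W_{i-1},W_{i+1})>6E$ will be fed into Corollary~\ref{cor:necessariamente_transverse} (or a Behrstock-type argument) later to control the global behaviour of the chain $W_1,\dots,W_k$.
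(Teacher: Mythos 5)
Your argument is correct and is essentially the paper's own proof: the same triangle inequality in $\C W_i$, with the middle term bounded below by $M$ via Notation~\ref{notation:C_iW_i} and the two error terms bounded by the two halves of Claim~\ref{claim:projection of adjacent Wi}. The only bookkeeping difference is that, since these are distances between \emph{sets}, the triangle inequality also requires subtracting $\diam_{W_i}(\rho^{W_{i-1}}_{W_i})+\diam_{W_i}(\rho^{W_{i+1}}_{W_i})\le 2E$, which is exactly why the paper gets $4/5M-4E$ rather than your intermediate $4/5M-2E$; the slack you left absorbs this, so the stated bound and the comparison with $6E$ (using $M\ge 100E$) go through unchanged.
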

    \begin{proof}[Proof of Claim~\ref{claim:distance_of_consecutives}]
    We simply notice that
    $$\dist_{W_i}(W_{i-1}, W_{i+1})\ge \dist_{W_i}(C_{i}, C_{i+1})-\dist_{W_i}(C_{i}, W_{i-1})-\dist_{W_i}(C_{i+1}, W_{i+1})-$$
    $$-\diam_{W_i}(\rho^{W_{i-1}}_{W_i})-\diam_{W_i}(\rho^{W_{i+1}}_{W_i})\ge 4/5M-4E.$$
    \end{proof}



    We continue with a general statement about families of pairwise transverse domains in a hierarchical space:
    \begin{lem}[Transverse domains in a row]\label{lemma:alex_lemma}
        Let $(X,\frakS)$ be a hierarchical space, and let $\{W_1, \ldots, W_k\}\subset \frakS$ be a collection of domains such that  $W_i\transverse W_{i+1}$ for every $i=1,\ldots, k-1$. If $\dist_{W_j}(W_{j-1},W_{j+1})> 6E$ for every $j=2,\ldots, k-1$, then $\{W_1, \ldots, W_k\}$ are pairwise transverse, and $\dist_{W_j}(W_{i},W_{r})> 2E$ for every $i<j<r$.
    \end{lem}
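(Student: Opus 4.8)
The plan is to induct on $k$, the base cases $k \le 3$ being immediate (for $k=3$ the hypothesis $\dist_{W_2}(W_1,W_3) > 6E > 2E$ is exactly the conclusion, and $W_1 \transverse W_2$, $W_2 \transverse W_3$ are given — though I still need to check $W_1 \transverse W_3$, see below). For the inductive step, I would first establish the following key sub-claim by a "propagation" argument along the row: for any indices $i < j$, every point of $\pi_{W_j}(W_{i})$ — meaning $\rho^{W_i}_{W_j}$ — lies $E$-close to $\rho^{W_{j-1}}_{W_j}$, hence $\dist_{W_j}(W_i, W_{j+1}) \ge \dist_{W_j}(W_{j-1},W_{j+1}) - 2E > 4E$ in particular $> 2E$; and symmetrically for $i > j$ with $\rho^{W_{j+1}}_{W_j}$.

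To prove that sub-claim I would proceed by downward/upward induction anchored at $j$. Consider $\dist_{W_j}(W_{j-1}, W_{j+1}) > 6E$. Apply the Behrstock inequality to the transverse pair $W_{j-1} \transverse W_j$: for the realisation point (or rather, working purely with the $\rho$'s via Corollary~\ref{cor:necessariamente_transverse} and Lemma~\ref{lem:close_proj_of_orthogonals}) one of $\rho^{W_{j-1}}_{W_j}$ is $E$-close to... — more precisely, I want to compare $\rho^{W_{j-2}}_{W_j}$ with $\rho^{W_{j-1}}_{W_j}$. Since $W_{j-2} \transverse W_{j-1}$ and $\dist_{W_{j-1}}(W_{j-2}, W_j) > 6E > E$, the consistency/Behrstock inequality forces $\dist_{W_{j-1}}(\rho^{W_j}_{W_{j-1}}, \rho^{W_{j-2}}_{W_{j-1}})$ to be large... actually the clean tool is the last clause of Axiom~\ref{item:dfs_transversal} together with bounded geodesic image: I would argue that $\rho^{W_{j-2}}_{W_j}$ must be $E$-close to $\rho^{W_{j-1}}_{W_j}$, because otherwise projecting to $\C W_{j-1}$ would violate the Behrstock inequality between $W_{j-2}$ and $W_{j-1}$ (as $\rho^{W_j}_{W_{j-1}}$ would then be far from $\rho^{W_{j-2}}_{W_{j-1}}$, while it is also far from $\pi_{W_{j-1}}$ of the relevant realisation point). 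Iterating this down to $W_i$ gives $\dist_{W_j}(W_i, W_{j-1}) \le E$ for all $i < j$, which is the sub-claim; the symmetric statement for $i > j$ is identical.

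From the sub-claim, transversality of an arbitrary pair $W_i, W_r$ with $i < r$ follows: we have $\dist_{W_r}(W_i, W_{r+1})$ or more simply $\dist_{W_{r}}(\rho^{W_i}_{W_r}, \cdot)$ well-defined and, pushing the index $r$ to be the larger of the two and using the sub-claim at $j = r$ (when $r < k$) or the hypothesis directly, one gets $\dist_{W_r}(W_i, W_{i'})$-type quantities exceeding $2E$; then Corollary~\ref{cor:necessariamente_transverse} upgrades "large projection" to "transverse". For the endpoints $i=1$ or $r=k$ where the sub-claim doesn't directly give a third reference domain, I instead use that $\dist_{W_2}(W_1, W_r) > 2E$ (from the sub-claim applied at $j=2$, noting $r \ge 3$) together with Corollary~\ref{cor:necessariamente_transverse} to get $W_1 \transverse W_r$. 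Finally the inequality $\dist_{W_j}(W_i, W_r) > 2E$ for $i < j < r$ is read off by combining the two halves of the sub-claim: $\rho^{W_i}_{W_j}$ is $E$-close to $\rho^{W_{j-1}}_{W_j}$ and $\rho^{W_r}_{W_j}$ is $E$-close to $\rho^{W_{j+1}}_{W_j}$, so $\dist_{W_j}(W_i, W_r) \ge \dist_{W_j}(W_{j-1}, W_{j+1}) - 2E > 4E > 2E$.

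The main obstacle I anticipate is the bookkeeping in the propagation step: making precise the claim "$\rho^{W_{j-2}}_{W_j}$ is $E$-close to $\rho^{W_{j-1}}_{W_j}$" without a realisation point to test against, since the Behrstock inequality in Axiom~\ref{item:dfs_transversal} is stated for points $z \in X$, not for $\rho$-sets directly. The fix is to invoke the last clause of Axiom~\ref{item:dfs_transversal} (consistency among the $\rho$'s: if $U \nest V$ then $\dist_W(\rho^U_W, \rho^V_W) \le E$ when $V \transverse W$, $W \not\bot U$) — but here $W_{j-2}$ is *transverse* to $W_{j-1}$, not nested, so that clause does not literally apply, and I will instead need the version of consistency for $\rho$-maps that follows from combining bounded geodesic image (Axiom~\ref{item:dfs:bounded_geodesic_image}) with the point-Behrstock inequality, essentially re-deriving a "$\rho$-Behrstock" inequality for transverse triples. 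This is routine in the HHS literature but needs to be spelled out carefully to keep the constants ($6E$ versus $4E$ versus $2E$) honest.
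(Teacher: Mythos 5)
Your plan follows essentially the same route as the paper: an induction (the paper inducts on $k$ and feeds the two sub-collections $\{W_1,\dots,W_{k-1}\}$ and $\{W_2,\dots,W_k\}$ into the inductive hypothesis, while you anchor at $j$ and propagate along the index gap), with the same two ingredients — a Behrstock-type inequality applied to the $\rho$-sets to show $\rho^{W_i}_{W_j}$ lies close to $\rho^{W_{j\pm1}}_{W_j}$, and Corollary~\ref{cor:necessariamente_transverse} to upgrade the resulting bound $\dist_{W_j}(W_i,W_r)>2E$ to transversality of non-adjacent pairs. The only caveats are minor: your closing estimate must also subtract the diameters of the two $\rho$-sets (so the honest conclusion is $>2E$, not $>4E$, exactly as in the paper's $6E-4E$ computation), and the $\rho$-set version of the Behrstock inequality that you flag as an obstacle is used by the paper as well, with the same level of constant bookkeeping.
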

    Notice that the hypothesis of Lemma~\ref{lemma:alex_lemma} are satisfied by the $W_i$s we are considering, in view of Claims~\ref{claim:transverse} and~\ref{claim:distance_of_consecutives} and of the fact that $M\ge 100E$.

    \begin{proof}[Proof of Lemma~\ref{lemma:alex_lemma}]
        We proceed by induction on $k$. If $k=3$ we just need to show that $W_1\transverse W_3$. This is true since $\dist_{W_2}(W_{1},W_{3})> 6E$, and we can invoke Corollary~\ref{cor:necessariamente_transverse}.

        Now assume that the theorem is true for every collection of at most $k-1$ elements. In particular, by applying the inductive hypothesis to the collections $\{W_1, \ldots, W_{k-1}\}$ and to $\{W_2, \ldots, W_{k}\}$, we get that $\dist_{W_j}(W_{i},W_{r})> 2E$ whenever $i<j<r$ and $(i,r)\neq(1,k)$. Thus, we only need to show that $\dist_{W_j}(W_{1},W_{k})> 2E$ for every $j=2,\ldots, k-1$, and again Corollary~\ref{cor:necessariamente_transverse} will imply that $W_1\transverse W_k$. Now
        $$\dist_{W_j}(W_{1},W_{k})\ge \dist_{W_j}(W_{j-1},W_{j+1})-\dist_{W_j}(W_{j-1},W_{1})-$$
        $$-\dist_{W_j}(W_{j+1},W_{k})-\diam_{W_j}(\rho^{W_{j-1}}_{W_j})-\diam_{W_j}(\rho^{W_{j+1}}_{W_j}).$$
        Since by Behrstock inequality we have that $\dist_{W_j}(W_{j-1},W_{1})\le E$, and similarly $\dist_{W_j}(W_{j+1},W_{k})\le E$, we get that 
        $$\dist_{W_j}(W_{1},W_{k})> 6E-4E= 2E,$$
        as required.
    \end{proof}

    \begin{claim}\label{claim:W1Wk}
    For every $i\neq j$ and every $x\in C_j$, $\dist_{W_i}(x, W_j)\le E$.
    \end{claim}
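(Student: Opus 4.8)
The plan is to establish the estimate one pair $(i,j)$ at a time, by invoking the Behrstock inequality for the transverse pair $W_i\transverse W_j$ (transverse by Claim~\ref{claim:transverse} and Lemma~\ref{lemma:alex_lemma}): for every $x\in C_j$,
$$\min\left\{\dist_{W_i}(x,\rho^{W_j}_{W_i}),\ \dist_{W_j}(x,\rho^{W_i}_{W_j})\right\}\le E,$$
so that it is enough to exclude the second alternative, i.e.\ to prove that $\pi_{W_j}(x)$ is \emph{far} from $\rho^{W_i}_{W_j}$ in $\C W_j$. Since $x\in C_j$, Claim~\ref{claim:projection of adjacent Wi} tells us that $\pi_{W_j}(x)\subseteq\pi_{W_j}(C_j)$ lies within $M/10+E$ of $\rho^{W_{j-1}}_{W_j}$ when $j\ge 2$ (and when $j=1$ it lies at distance $\ge M-(M/10+E)$ from $\rho^{W_2}_{W_1}$, by Assumption~\ref{hyp::4} together with Claim~\ref{claim:projection of adjacent Wi}). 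So the whole problem reduces to locating $\rho^{W_i}_{W_j}$ inside $\C W_j$ relative to the two landmarks $\rho^{W_{j-1}}_{W_j}$ and $\rho^{W_{j+1}}_{W_j}$, which by Claim~\ref{claim:distance_of_consecutives} are at distance $\ge 4M/5-4E$ from one another.

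The key auxiliary fact I would first prove is a ``Behrstock inequality for the $\rho$--maps'': if $U,V,W\in\frakS$ are pairwise transverse and $\dist_V(U,W)>2E$, then $\dist_U(V,W)\le 2E$. This follows by taking a partial realisation point $z$ for the singleton family $\{W\}$, so that $\dist_U(z,\rho^W_U)\le E$ and $\dist_V(z,\rho^W_V)\le E$ by the partial realisation axiom; then $\dist_V(z,\rho^U_V)\ge\dist_V(\rho^W_V,\rho^U_V)-E>E$, so the Behrstock inequality for $U\transverse V$ forces $\dist_U(z,\rho^V_U)\le E$, and hence $\dist_U(\rho^V_U,\rho^W_U)\le 2E$ (this is essentially the estimate already used inside the proof of Lemma~\ref{lemma:alex_lemma}). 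Combining this with the lower bounds $\dist_{W_m}(W_p,W_q)>2E$ for $p<m<q$ from Lemma~\ref{lemma:alex_lemma}, one gets that $\rho^{W_m}_{W_j}$ is $2E$--close to $\rho^{W_{j+1}}_{W_j}$ for every $m>j$, and $2E$--close to $\rho^{W_{j-1}}_{W_j}$ for every $m<j$.

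For $i>j$ (so $j\le k-1$; we may assume $j\ge 2$, since $j=1$ was handled above) this gives that $\rho^{W_i}_{W_j}$ is within $2E$ of $\rho^{W_{j+1}}_{W_j}$, which by Claim~\ref{claim:distance_of_consecutives} is at distance $\ge 4M/5-4E$ from $\rho^{W_{j-1}}_{W_j}$, which in turn is within $M/10+E$ of $\pi_{W_j}(x)$; therefore
$$\dist_{W_j}(x,\rho^{W_i}_{W_j})\ \ge\ 4M/5-4E-2E-(M/10+E)\ =\ 7M/10-7E\ >\ E$$
since $M\ge 100E$, and the Behrstock alternative gives $\dist_{W_i}(x,W_j)\le E$, as wanted. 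The case $i<j$ I would handle by symmetry: reversing the order of the chain $W_1,\dots,W_k$ and relabelling the markers $C_1,\dots,C_{k+1}$ accordingly preserves the hypotheses of Claims~\ref{claim:projection of adjacent Wi},~\ref{claim:distance_of_consecutives} and of Lemma~\ref{lemma:alex_lemma}, so it reduces to the case just proved, the only difference being that the boundary index to treat by hand is now $j=k$ rather than $j=1$.

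The hard part is bookkeeping rather than conceptual: one has to keep track of which ``side'' of $\C W_j$ each $\rho$--projection sits on and make sure that every triangle inequality is used in the favourable direction. The whole argument works because the only lower bound ever needed is the one on $\dist_{W_j}(x,\rho^{W_i}_{W_j})$, and this is available precisely because $\rho^{W_i}_{W_j}$ is pinned near the landmark \emph{opposite} to $\pi_{W_j}(C_j)$, so that the gap $4M/5-4E$ of Claim~\ref{claim:distance_of_consecutives} swamps the $M/10$--slack coming from Claim~\ref{claim:projection of adjacent Wi}. The remaining care goes into the boundary indices $j=1$ and $j=k$, where one of the landmarks is missing and is replaced by $C_2$, respectively $C_k$, via Assumption~\ref{hyp::4}.
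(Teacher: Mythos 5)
Your treatment of the case $i>j$ is correct and is in substance the paper's own argument: the paper assumes $\dist_{W_i}(x,W_j)>E$, deduces $\dist_{W_j}(x,W_i)\le E$ from the Behrstock inequality, pins $\rho^{W_i}_{W_j}$ near $\rho^{W_{j+1}}_{W_j}$ via Lemma~\ref{lemma:alex_lemma}, and contradicts $\dist_{W_j}(C_j,C_{j+1})\ge M$ through Claim~\ref{claim:projection of adjacent Wi}; you run the same estimate in contrapositive-free form. (Two harmless quantitative remarks: your auxiliary ``Behrstock inequality for the $\rho$-maps'' is fine, and when quoting Claim~\ref{claim:projection of adjacent Wi} for a single point $x\in C_j$ you should add $\diam_{W_j}(C_j)\le M/10$ from Assumption~\ref{hyp::1}; the slack $M\ge 100E$ absorbs both.)

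The case $i<j$, however, contains a genuine gap. The claim pairs the coset $C_j$ with the domain $W_j$, which lies on one specific side of $C_j$ in the chain; reversing the chain ($W'_m=W_{k+1-m}$, $C'_m=C_{k+2-m}$) sends $C_j$ to $C'_{k+2-j}$ but sends $W_j$ to $W'_{k+1-j}=W'_{(k+2-j)-1}$, i.e.\ to the domain \emph{preceding} the coset. So applying your proved case to the reversed chain yields $\dist_{W_i}(x,W_{j-1})\le E$ for $i<j-1$ (and nothing at all for $i=j-1$, where the reversed indices coincide), not $\dist_{W_i}(x,W_j)\le E$; from it you only recover $\dist_{W_i}(x,W_j)\le 4E$ or so, via $\dist_{W_i}(W_{j-1},W_j)\le 2E$. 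Nor can your exclusion mechanism be run directly when $i<j$: there $\rho^{W_i}_{W_j}$ is pinned ($2E$-close) to $\rho^{W_{j-1}}_{W_j}$, hence within $M/10$ plus a bounded multiple of $E$ of $\pi_{W_j}(C_j)\ni\pi_{W_j}(x)$ --- on the \emph{same} side as $x$ --- so the alternative $\dist_{W_j}(x,\rho^{W_i}_{W_j})\le E$ cannot be ruled out, and the Behrstock inequality then gives no information in $\C W_i$. In other words, the reversal does not ``reduce to the case just proved''; it proves a reindexed statement. (For comparison, the paper writes out only the case $j<i$ and declares the other ``analogous''; note also that the reindexed statement your symmetry does give is what the second half of Lemma~\ref{lem:Cj_close_in_W_i} actually uses, but it is not the claim as stated, and the subcase $i=j-1$ still needs a separate argument.)
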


    \begin{proof}[Proof of Claim~\ref{claim:W1Wk}]
        We assume that $j<i$, as the case $j>i$ is dealt with analogously. If by contradiction $\dist_{W_i}(x, W_j)> E$, then the Behrstock inequality implies that $\dist_{W_j}(x, W_i)\le E$. Moreover $\dist_{W_j}(W_i,W_{j+1})\le E$, either by combining the Behrstock inequality with Lemma~\ref{lemma:alex_lemma} (if $j\le i-2$) or because $W_i=W_{j+1}$ (if $j=i-1$). Thus we get
         $$\dist_{W_j}(x,C_{j+1})\le \dist_{W_j}(x,W_i)+\diam_{W_j}(\rho^{W_i}_{W_j})+\dist_{W_j}(W_i,W_{j+1})+$$
         $$+\diam_{W_j}(\rho^{W_{i+1}}_{W_j})+\dist_{W_j}(W_{j+1},C_{j+1})\le M/10+5E, $$
        where we used that $\dist_{W_j}(W_{j+1},C_{j+1})\le M/10+5E$ by Claim~\ref{claim:projection of adjacent Wi}. But then
        $$M\le \dist_{W_j}(C_j,C_{j+1}) \le \dist_{W_j}(x,C_{j+1}) M/10+5E,$$
        giving a contradiction as $M\ge 100E$.
    \end{proof}

    \begin{claim}\label{claim:large projection}
        $\dist_{W_k}(C x_0, w Cx_0)=\dist_{W_k}(C_1, C_{k+1})\ge 9/10M-5E>0.$
    \end{claim}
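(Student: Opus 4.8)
The plan is to estimate $\dist_{W_k}(C_1, C_{k+1})$ from below by isolating the contribution of the "last step" $\dist_{W_k}(C_k, C_{k+1})$, which is $\ge M$ by Notation~\ref{notation:C_iW_i}, and controlling the error terms coming from the initial segment $C_1$. Concretely, I would write the reverse triangle inequality
\begin{align*}
\dist_{W_k}(C_1, C_{k+1}) &\ge \dist_{W_k}(C_k, C_{k+1}) - \dist_{W_k}(C_1, C_k).
\end{align*}
The first term is $\ge M$. For the second term, the key observation is that $C_1 \subseteq W_1$-adjacent data lies "behind" $W_{k-1}$ as seen from $W_k$: more precisely, $C_1 = Cx_0$ and $1 < k$, so Claim~\ref{claim:W1Wk} (with $j=1$, $i=k$, $x\in C_1$) gives $\dist_{W_k}(C_1, W_{k-1}) \le E$. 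Hence, using that $W_{k-1}\transverse W_k$ so $\diam_{W_k}(\rho^{W_{k-1}}_{W_k})\le E$, together with Claim~\ref{claim:projection of adjacent Wi} applied at index $i=k-1$... wait — I actually want the bound $\dist_{W_k}(W_{k-1}, C_k)\le M/10+E$, which is exactly the first inequality in Claim~\ref{claim:projection of adjacent Wi} with $i=k$. So
\begin{align*}
\dist_{W_k}(C_1, C_k) &\le \dist_{W_k}(C_1, W_{k-1}) + \diam_{W_k}(\rho^{W_{k-1}}_{W_k}) + \dist_{W_k}(W_{k-1}, C_k) \le E + E + (M/10 + E),
\end{align*}
so $\dist_{W_k}(C_1, C_k)\le M/10 + 3E$. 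Combining, $\dist_{W_k}(C_1, C_{k+1}) \ge M - M/10 - 3E = 9M/10 - 3E > 0$ since $M\ge 100E$. (The stated constant is $9M/10 - 5E$, which is weaker and also works; I would keep a little slack in case one needs Claim~\ref{claim:W1Wk} via an extra $\rho$-diameter term, making it $9M/10 - 5E$.)

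The only subtlety — and the thing I'd double-check — is the edge case $k=3$ versus larger $k$, since Claim~\ref{claim:W1Wk} and Claim~\ref{claim:projection of adjacent Wi} were both stated for the relevant index ranges ($i\ne j$, and $i\ge 2$ respectively), and one must make sure $k\ge 3$ (guaranteed, since the $k\le 2$ case was handled separately before Notation~\ref{notation:C_iW_i}) so that $W_{k-1}$ genuinely exists and is distinct in role from $W_k$. I'd also verify that Claim~\ref{claim:W1Wk} applies with $j=1$: it requires $1\ne k$ and a point $x\in C_1$, both fine. No real obstacle here; this claim is a short bookkeeping consequence of the two preceding claims. The genuine content — that $w\ne_G 1$ — then follows immediately: $\dist_{W_k}(Cx_0, wCx_0) > 0$ forces $wCx_0 \ne Cx_0$ as subsets of $X$ up to the relevant coarse constants, hence $w$ acts nontrivially on $(X,\frakS)$ and so $w\ne_G 1$; but that deduction presumably comes after the claim, so I would stop at the displayed lower bound.
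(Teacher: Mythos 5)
Your argument is essentially the paper's: isolate the last jump $\dist_{W_k}(C_k,C_{k+1})\ge M$ and control the contribution of $C_1$ via Claim~\ref{claim:W1Wk} together with the bound $\dist_{W_k}(W_{k-1},C_k)\le M/10+E$ from Claim~\ref{claim:projection of adjacent Wi}. The one inaccuracy is your citation of Claim~\ref{claim:W1Wk}: with $i=k$, $j=1$ it places $\pi_{W_k}(C_1)$ within $E$ of $\rho^{W_1}_{W_k}$, \emph{not} of $\rho^{W_{k-1}}_{W_k}$, so you still need the bridge $\dist_{W_k}(W_1,W_{k-1})\le E$ (Behrstock, available since $\dist_{W_{k-1}}(W_1,W_k)>2E$ by Lemma~\ref{lemma:alex_lemma}); this, plus the $\diam_{W_k}(C_1)$-type terms that your triangle inequality for set-distances silently drops, is exactly what the paper packages into the single estimate $\diam_{W_k}\left(\rho^{W_{k-1}}_{W_k}\cup\rho^{W_1}_{W_k}\cup C_1\right)\le 4E$. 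These corrections cost only a few extra multiples of $E$ and are absorbed by the slack you reserved and by $M\ge 100E$, so the stated bound $9M/10-5E>0$ goes through as in the paper.
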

    The Claim concludes the proof of Theorem~\ref{thm:amalgamation}, because it proves that $w$ acts non-trivially on $X$, and therefore is non-trivial in $G$.
    
    \begin{proof}[Proof of Claim~\ref{claim:large projection}]
    This is just a matter of putting all the above Claims together. Indeed, one has that
     $$\dist_{W_k}(C_1, C_{k+1})\ge \dist_{W_k}(C_{k},C_{k+1})-\dist_{W_k}(C_k, W_{k-1})-\diam_{W_k}\left(\rho^{W_{k-1}}_{W_k}\cup\rho^{W_{1}}_{W_k}\cup C_1\right). $$
    The first term of the right-hand side is at least $M$ by assumption~\ref{hyp::4}. The second one is at most $M/10+E$ by Claim~\ref{claim:projection of adjacent Wi}. Regarding the third one, we have that 
    $$\diam_{W_k}\left(\rho^{W_{k-1}}_{W_k}\cup\rho^{W_{1}}_{W_k}\cup C_1\right)\le$$
    $$\le \diam_{W_k}\left(\rho^{W_{k-1}}_{W_k}\right)+\dist_{W_k}(W_{k-1},W_1)+\diam_{W_k}\left(\rho^{W_{1}}_{W_k}\cup C_1\right)\le E+E+(2E)=4E, $$
    where we used Claim~\ref{claim:W1Wk} to bound the last term. Hence $$\dist_{W_k}(C_1, C_{k+1})\le 9/10 M-5E,$$ as required.
    \end{proof}

Before moving forward, we point out the following lemma, which combines some of the above Claims and will be useful later.



    
\begin{lem}\label{lem:Cj_close_in_W_i} For every $1\le j<i\le k$ and every $x\in C_j$,
$$\dist_{W_i}(x, C_i)\le M/10+5E.$$
Symmetrically, for every $1\le i+1<j\le k$ and every $x\in C_j$,
$$\dist_{W_{i}}(x, C_{i+1})\le M/10+5E.$$
\end{lem}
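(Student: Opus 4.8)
The plan is to extract this statement directly from the machinery already assembled in Claims~\ref{claim:transverse}–\ref{claim:W1Wk}, since the two inequalities are essentially the "one-step" estimates that appeared inside the proof of Claim~\ref{claim:W1Wk}. First I would treat the asymmetric case $j < i$. By Claim~\ref{claim:W1Wk} applied to the pair $(i,j)$, every $x \in C_j$ satisfies $\dist_{W_i}(x, W_j) \le E$; that is, $\pi_{W_i}(x)$ is $E$-close to $\rho^{W_j}_{W_i}$. So it suffices to bound $\dist_{W_i}(\rho^{W_j}_{W_i}, C_i)$, i.e.\ $\dist_{W_i}(W_j, C_i)$, by something like $M/10 + 4E$.

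To get that bound I would chain through the intermediate domain $W_{i-1}$, paralleling the argument in Claim~\ref{claim:W1Wk}. When $j = i-1$ we have $W_j = W_{i-1}$, and Claim~\ref{claim:projection of adjacent Wi} gives directly $\dist_{W_i}(W_{i-1}, C_i) \le M/10 + E$, so $\dist_{W_i}(x, C_i) \le M/10 + 2E$. When $j \le i-2$, Lemma~\ref{lemma:alex_lemma} together with the Behrstock inequality gives $\dist_{W_i}(W_j, W_{i-1}) \le E$ (the roles of $W_1, W_j$ are interchangeable here since $j < i-1 < i$, and $\dist_{W_i}(W_{i-1}, W_j) > 2E$ would force the Behrstock alternative on the other side), and then
$$\dist_{W_i}(x, C_i) \le \dist_{W_i}(x, W_j) + \diam_{W_i}(\rho^{W_j}_{W_i}) + \dist_{W_i}(W_j, W_{i-1}) + \diam_{W_i}(\rho^{W_{i-1}}_{W_i}) + \dist_{W_i}(W_{i-1}, C_i),$$
which is at most $E + E + E + E + (M/10 + E) = M/10 + 5E$, using Claim~\ref{claim:projection of adjacent Wi} for the last term. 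The symmetric case ($i+1 < j$, bounding $\dist_{W_i}(x, C_{i+1})$) is handled the same way with $W_i, W_{i+1}$ in place of $W_{i-1}, W_i$: apply Claim~\ref{claim:W1Wk} to get $\dist_{W_i}(x, W_j) \le E$ for $x \in C_j$ (note $j > i+1 > i$, so this is the case $j > i$ of that Claim), then route through $W_{i+1}$ using $\dist_{W_i}(W_j, W_{i+1}) \le E$ from Lemma~\ref{lemma:alex_lemma} and the bound $\dist_{W_i}(W_{i+1}, C_{i+1}) \le M/10 + E$ from the second half of Claim~\ref{claim:projection of adjacent Wi}.

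The main thing to be careful about — rather than a genuine obstacle — is checking that the indices fall in the range where Lemma~\ref{lemma:alex_lemma} and Claim~\ref{claim:W1Wk} actually apply, and keeping track of which endpoint of the triple $(W_j, W_{i-1}, W_i)$ or $(W_i, W_{i+1}, W_j)$ the Behrstock inequality pins down; once the degenerate subcases $j = i-1$ and $j = i+1$ are separated out, everything is a routine triangle-inequality bookkeeping with the constant $M/10 + 5E$ coming out with room to spare since $M \ge 100E$.
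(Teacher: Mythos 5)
Your proposal is correct and follows essentially the same route as the paper's proof: bound $\dist_{W_i}(x,W_j)$ by $E$ via Claim~\ref{claim:W1Wk}, bound $\dist_{W_i}(W_j,W_{i-1})$ by $E$ via the Behrstock inequality combined with Lemma~\ref{lemma:alex_lemma}, and finish with Claim~\ref{claim:projection of adjacent Wi} and the triangle inequality to get $M/10+5E$. The only cosmetic difference is that you separate out the degenerate case $j=i-1$, which the paper's single chained inequality absorbs implicitly.
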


\begin{proof} 
This is just a combination of some inequalities from the above proof. Indeed
$$\dist_{W_i}(x, C_i)\le\dist_{W_i}(x,W_j)+\diam_{W_i}(W_j)+\dist_{W_i}(W_j, W_{i-1})+\diam_{W_i}(W_{i-1})+\dist_{W_i}(W_{i-1}, C_i).$$
The first term is at most $E$ by Claim~\ref{claim:W1Wk}; the third term is at most $E$ by the Behrstock inequality, combined with Lemma~\ref{lemma:alex_lemma}; the last term is at most $M/10+E$ by Claim~\ref{claim:projection of adjacent Wi}. Thus
$$\dist_{W_i}(x, C_i)\le 4E+M/10+E=M/10+5E.$$
The second inequality follows analogously. 
\end{proof}

\section{Amalgamation of geometric subgroups along common boundaries}\label{sec:mcg}
We not describe an application of Theorem~\ref{thm:amalgamation} to mapping class groups of finite-type surfaces. The subgroups we shall amalgam are mapping class groups of embedded subsurfaces, which overlap “sufficiently” away from a collection of common boundary components. The precise result is Theorem~\ref{thm:geometric} below, which in turn is Theorem~\ref{thmintro:geometric} from the introduction. 

\subsection{Notation and setting}
We gather here the (fairly standard, but sometimes subtle) notation we shall need to prove Theorem~\ref{thm:geometric}.
\subsubsection{Curves, surfaces, and mapping classes}
In what follows, let $S$ be a possibly disconnected surface of finite-type (that is, an oriented, compact surface from which a finite number of points is removed). If $S$ is connected, let $\MCG(S)$ be the (extended) mapping class group of $S$, that is, the group of isotopy class of self-homeomorphisms of $S$ fixing the boundary pointwise. If instead $S$ is disconnected, its mapping class group is defined as the direct product of the mapping class groups of its connected components.

By \emph{curve} we denote the isotopy class of an embedding $\mathbb{S}^1\hookrightarrow S$ which is \emph{essential}, meaning that it does not bound a disk with at most one puncture. A \emph{multicurve} is a collection of pairwise disjoint, non-isotopic curves. We often see a multicurve as a subsurface of $S$, by replacing each curve with a closed annulus whose core is the curve. Given a curve $\gamma$, let $T_\gamma$ be the Dehn Twist around $\gamma$ (see e.g. \cite[Section 3.1.1]{FarbMargalit}).

The \emph{curve graph} $\C S$ is the simplicial graph whose vertices are all curves on $S$, and where adjacency corresponds to disjointness. This definition does not apply to some surfaces of small complexity:
\begin{itemize}
    \item If $S$ is either a sphere with four punctures, or a torus with at most two punctures, then two curves are adjacent in $\C S$ if and only if their intersection number is minimal among all pairs.
    \item If $S$ is an annulus then its \emph{annular curve graph}, which we still denote by $\C S$, is a quasiline. We won't need the actual definition, and we refer to \cite[Section 2.4]{MasurMinsky2} for further explanations.
\end{itemize}  

The following is an easy consequence of known facts, but we provide a proof for completeness. Recall that a subgroup $H$ of a group $G$ is \emph{separable} if, for every $g\in G-H$, there exists a finite quotient $\psi\colon G\to \overline G$ such that $\psi(g)\not \in \psi(H)$. 
\begin{lem}\label{lem:separation_boundary}
    Let $S$ be a finite-type surface, and let $\Gamma\subseteq \partial S$ be a collection of boundary components. Then the Dehn twist flat $\Z^{|\Gamma|}$ supported on $\Gamma$ is a separable subgroup of $\MCG(S)$. 
\end{lem}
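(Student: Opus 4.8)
The plan is to reduce the separability of the Dehn twist flat $\Z^{|\Gamma|}$ to known separability results for mapping class groups, exploiting the splitting of $\MCG(S)$ relative to the boundary. First I would recall the \emph{capping} (or \emph{puncturing}) homomorphism: let $\widehat S$ be the surface obtained from $S$ by gluing a once-punctured disk to each boundary component in $\Gamma$. There is a standard short exact sequence
$$1\to \Z^{|\Gamma|}\to \MCG(S')\to \MCG(\widehat S)\to 1,$$
where $S'$ is $S$ with the components of $\Gamma$ capped and the other boundary components retained, and the kernel is precisely the free abelian group generated by the Dehn twists about the curves in $\Gamma$ (these become peripheral loops around the new punctures). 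Actually, to keep bookkeeping simple I would instead use the homomorphism $\MCG(S)\to\MCG(\widehat S)$ directly: its kernel contains the $\Z^{|\Gamma|}$ we care about, together with the (finitely generated abelian) subgroup generated by half-twists/point-pushes if $\Gamma$ has multiple components on the same piece — so some care is needed, and the cleanest route is to cap the $\Gamma$-components with \emph{punctured} disks so the kernel is exactly $\Z^{|\Gamma|}$.

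The key steps, in order, are: (1) identify $\Z^{|\Gamma|}$ as the kernel $K$ of an explicit surjection $\varphi\colon \MCG(S)\to Q$ onto the mapping class group of a finite-type surface (capping off $\Gamma$ by once-punctured disks); (2) given $g\in\MCG(S)-\Z^{|\Gamma|}$, split into two cases according to whether $\varphi(g)$ is trivial or not; (3) if $\varphi(g)\neq 1$, use separability of the trivial subgroup in $Q$ — i.e. residual finiteness of mapping class groups of finite-type surfaces, which is classical — to find a finite quotient of $Q$, hence of $\MCG(S)$, in which the image of $g$ is nontrivial while $\Z^{|\Gamma|}\le K$ dies; (4) if $\varphi(g)=1$, then $g\in K=\Z^{|\Gamma|}$, contradicting the choice of $g$, so this case is vacuous. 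Thus separability of $\Z^{|\Gamma|}$ in $\MCG(S)$ follows from residual finiteness of $\MCG(\widehat S)$ plus the fact that $\Z^{|\Gamma|}$ is \emph{exactly} the kernel, so that every $g$ outside it has nontrivial image downstairs.

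The main obstacle is step (1): correctly arranging the capping so that the kernel of the resulting homomorphism is \emph{precisely} $\Z^{|\Gamma|}$ and not something larger. If one caps a boundary component with an unpunctured disk the Dehn twist about that curve becomes trivial but one may also kill nothing else; if one caps with a punctured disk the twist survives as a puncture-loop and one must check no extra relations (e.g. among point-pushing maps, the "lantern"-type relations, or braiding of the new punctures when several components of $\Gamma$ are capped) are introduced into the kernel. The standard reference for the exactness of the capping sequence (e.g. \cite[Section 3.6, Proposition 3.19]{FarbMargalit}, applied boundary-component by boundary-component) handles this; iterating over the components of $\Gamma$ and using that the successive kernels are the cyclic groups generated by the corresponding boundary twists gives $K\cong\Z^{|\Gamma|}$ generated by $\{T_\gamma\}_{\gamma\in\Gamma}$. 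Once that identification is in place, the rest is immediate from residual finiteness of mapping class groups.
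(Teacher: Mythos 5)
Your proposal is correct and follows essentially the same route as the paper: cap each component of $\Gamma$ with a once-punctured disk, identify $\Z^{|\Gamma|}$ as precisely the kernel of the induced map to the (puncture-fixing) mapping class group of the capped surface via \cite[Proposition 3.19]{FarbMargalit}, and then invoke residual finiteness of mapping class groups to separate any $g\notin\Z^{|\Gamma|}$. The only cosmetic point is that the image of the capping map is the subgroup of $\MCG(\widehat S)$ fixing the new punctures pointwise rather than all of $\MCG(\widehat S)$, but residual finiteness passes to subgroups, exactly as in the paper's argument.
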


\begin{proof}
Let $\widehat S$ be the surface obtained from $S$ by gluing a once-punctured disk  onto each boundary component belonging to $\Gamma$, and let $\{p_1,\ldots, p_r\}$ be the punctures added this way. By e.g. \cite[Proposition 3.19]{FarbMargalit}, the quotient $\MCG(S)/\Z^{|\Gamma|}$ is isomorphic to the group $\MCG(\widehat S, \{p_1,\ldots,p_k\})$, which is the subgroup of $\MCG(\widehat S)$ of all mapping classes fixing the punctures $\{p_1,\ldots,p_k\}$ pointwise. 

Now let $g\in \MCG(S)-\Z^{|\Gamma|}$, and let $h$ be its image in $\MCG(\widehat S, \{p_1,\ldots,p_k\})$, which is therefore non-trivial. Now, $\MCG(\widehat S, \{p_1,\ldots,p_k\})\le \MCG(\widehat S)$, and the latter is residually finite (see e.g. \cite[Theorem 6.11]{FarbMargalit} and the discussion below it for punctured surfaces); hence we can find a finite quotient $\ov G$ of $\MCG(\widehat S, \{p_1,\ldots,p_k\})$ where $h$ projects non-trivially. Then, if we take the composition $\MCG(S)\to \MCG(\widehat S, \{p_1,\ldots,p_k\})\to \ov G$, we get a finite quotient such that the image of $\Z^{|\Gamma|}$ is trivial while the image of $g$ is not.
\end{proof}

\subsubsection{The marking graph}
Let $\M (S)$ be the \emph{marking graph} of $S$, as defined in \cite[Section 2.5]{MasurMinsky2}. A vertex $x$ of $\M(S)$ consists of a multicurve of maximal cardinality, called the \emph{support} of $x$ and denoted by $\text{supp}(x)$, and for every $\alpha\in \text{supp}(x)$ a choice of a set $p\in \C \alpha$ of diameter at most $1$, called the \emph{transversal} associated to $\alpha$. By e.g. \cite[Theorem 11.1]{HHS_II}, which in turn builds on observations from \cite{MasurMinsky2}, $\M (S)$ has the following HHS structure:
    \begin{itemize}
        \item The domain set $\frakS$ is the collection of subsurfaces of $S$;
        \item $\nest$ is containment and $\orth$ is disjointness of subsurfaces (up to isotopy);
        \item For every $U\in \frakS$, the associated coordinate space $\C U$ is the curve graph of $U$, and the projection $\pi_U\colon \M (S)\to \C U$ is the subsurface projection.
    \end{itemize}
    Furthermore, $\MCG(S)$ acts geometrically on $\M(S)$, and therefore inherits the HHG structure described above.

\subsection{The result}
By \emph{subsurface} we mean the isotopy class of an embedding $U\hookrightarrow S$, where $U$ is a finite-type surface whose connected components cannot be pairs of pants. The \emph{boundary} $\partial U$ of a connected, non-annular subsurface $U$ is defined as the closure of $U$ minus its interior: if $U$ is an annulus, with a little abuse of notation we define its boundary as the core of the annulus. The boundary of a disconnected subsurface is the union of the boundaries of its connected components.

If $U$ is a \emph{closed} subsurface, we can extend every mapping class on $U$ to the identity on $S-U$, and we get a homomorphism $\MCG(U)\to \MCG(S)$. This map is injective if and only if every curve in $\partial U$ is essential in $S$, and no two boundary components are isotopic (see e.g. \cite[Theorem 3.18]{FarbMargalit}). In this case, we call $\MCG(U)$ a \emph{geometric embedded} subgroup of $\MCG(S)$. Our main theorem describes when two such subgroups span a free product, amalgamated along a common boundary Dehn twist flat:

\begin{thm}\label{thm:geometric}
   Let $S$ be a connected finite-type surface, and let $\MCG(U)$ and $\MCG(V)$ be two geometric embedded subgroups, where each of $U$ and $V$ is either connected or a multicurve.

   Let $\Gamma$ be the collection of all curves which belong to the boundary of both $U$ and $V$, and let $\Gamma_U=\partial U-\Gamma$ (resp. $\Gamma_V=\partial V-\Gamma$). Suppose that $\Gamma_U$, $\Gamma_V$ are both non-empty, and that $\dist_{\C (S-\Gamma)}(\Gamma_U,\Gamma_V)\ge 4$.

   Then there exist finite index subgroups $A\le \MCG(U)$, $B\le \MCG(V)$, intersecting along the Dehn twist flat $\Z^{|\Gamma|}$, such that
   $$\langle A,B\rangle_{\MCG(S)}\cong A *_{\Z^{|\Gamma|}} B.$$
\end{thm}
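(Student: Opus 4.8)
The strategy is to verify the four hypotheses of Theorem~\ref{thm:amalgamation} (with $n=2$, $A_1 = A \le \MCG(U)$, $A_2 = B \le \MCG(V)$, and $C = \Z^{|\Gamma|}$ the Dehn twist flat on $\Gamma$), for a suitable choice of finite-index subgroups and a suitable assignment of witness domains $Y_a$. First I would record the two basic group-theoretic facts: that $\MCG(U) \cap \MCG(V) = \Z^{|\Gamma|}$ (this uses the distance-$\ge 4$ hypothesis on $\Gamma_U, \Gamma_V$ in $\C(S-\Gamma)$, which forces any common mapping class to fix both $\Gamma_U$ and $\Gamma_V$, hence to be supported on the annuli of $\Gamma$), and that $\Z^{|\Gamma|}$ is separable in each of $\MCG(U)$ and $\MCG(V)$ by Lemma~\ref{lem:separation_boundary}. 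Separability is what lets me pass to finite-index $A, B$ with the quantitative translation properties needed for hypotheses \ref{hyp::1} and \ref{hyp::4}.

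The heart of the argument is choosing the domains $Y_a$ and choosing $A,B$ so that hypotheses \ref{hyp::1}--\ref{hyp::3} hold with $M$ as large as we like. For $a \in A - C$: since $a$ preserves $U$ and is not a pure multitwist on $\Gamma$, its "reduction" $\bar a$ to $\MCG(U/\Gamma)$ (cap off the $\Gamma$-boundary components) is a nontrivial mapping class, so some subsurface $Z \nest U$ witnesses a large subsurface projection $\dist_Z(x_0, a x_0)$ — more precisely, by the Behrstock-Kleiner-Masur-Minsky machinery, either $a$ has a nontrivial projection growing linearly in word length on some fixed subsurface, or (the annular case) $a$ involves twisting. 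Using separability I pass to a finite-index $A$ in which every nontrivial element maps to a quotient where its image is "large enough" that one can select $Y_a \nest U$ with $\dist_{Y_a}(Cx_0, aCx_0) \ge M$; the diameter bounds in \ref{hyp::1} follow because $C = \Z^{|\Gamma|}$ has coarsely bounded orbit in $\C Y_a$ whenever $Y_a$ is not one of the $\Gamma$-annuli (and those are ruled out by the choice that $Y_a \nest U$ with $a$ acting nontrivially on $U/\Gamma$, or by direct twisting estimates). The transversality hypothesis \ref{hyp::2} — that $Y_a \transverse a Y_b$ for $a\in A-C$, $b \in B-C$ — is where the distance-$4$ hypothesis does its real work: $Y_a \nest U$ so $\partial Y_a$ is within distance $1$ of $\partial U \supseteq \Gamma_U$ in $\C(S-\Gamma)$, and similarly $\partial(aY_b)$ is near $\Gamma_V$; since $\dist_{\C(S-\Gamma)}(\Gamma_U, \Gamma_V) \ge 4$, the subsurfaces $Y_a$ and $aY_b$ overlap, hence are transverse (they are incomparable because neither boundary is close to the other). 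Hypothesis \ref{hyp::3}, $\dist_{Y_a}(Cx_0, bCx_0) \le M/10$, holds because $b \in \MCG(V)$ fixes $\Gamma$ and acts on a subsurface "far" from $Y_a$, so $b$ moves the basepoint projection in $\C Y_a$ by a uniformly bounded amount (bounded geodesic image, since $\rho^{Y_a}_{S-\Gamma}$ is far from $\partial V$); after increasing $M$ past this universal constant the inequality is automatic. Symmetric statements hold with the roles of $A$ and $B$ swapped.

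Once all four hypotheses hold for $M \ge \max(M_0, 100E)$ — and one can arrange this by choosing the finite-index subgroups deeply enough, since the only $M$-dependent conditions (\ref{hyp::1}, \ref{hyp::4}) are the ones separability lets us push, while (\ref{hyp::2}, \ref{hyp::3}) give bounds independent of the subgroup — Theorem~\ref{thm:amalgamation} delivers $\langle A, B\rangle \cong A *_C B$ with $C$ the common intersection, which we have already identified as $\Z^{|\Gamma|}$. The main obstacle I anticipate is the careful bookkeeping in the second paragraph: making the choice of $Y_a$ uniform enough (for instance, ensuring $Y_a$ never accidentally equals a $\Gamma$-annulus, where $C$ would have unbounded projection and \ref{hyp::1} would fail), and controlling the interaction between the "linear growth of subsurface projections" statement and the passage to a finite-index subgroup — essentially one needs a version of the statement "in a finite-index subgroup chosen via separability of $\Z^{|\Gamma|}$, every non-multitwist element has a witness subsurface inside $U$ with arbitrarily large projection," which should follow from combining separability with the standard fact that elements of infinite order in a mapping class group act with positive translation length on some curve graph, but requires care when $a$ has finite order modulo $\Z^{|\Gamma|}$ (here one uses that a finite-order element still has a nontrivial bounded-but-definite projection, and one picks $M_0$ accordingly, or one first passes to a torsion-free finite-index subgroup).
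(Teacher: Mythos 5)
Your overall strategy coincides with the paper's: identify $\MCG(U)\cap\MCG(V)=\Z^{|\Gamma|}$, use separability of the twist flat (Lemma~\ref{lem:separation_boundary}) to pass to finite-index $A,B$ all of whose elements outside $\Z^{|\Gamma|}$ displace a fixed marking a lot in some witness domain $Y_a\nest U$ not nested in $\Gamma$, and then verify the hypotheses of Theorem~\ref{thm:amalgamation} with $M$ chosen after the configuration-dependent bounds. However, two of your verifications are not right as written. For hypothesis~\ref{hyp::2} you argue that $\partial(aY_b)$ stays close to $\Gamma_V$ in $\C(S-\Gamma)$; this is false, since $a\in\MCG(U)$ does not fix $\partial V$ and can move $\partial Y_b$ arbitrarily far in $\C(S-\Gamma)$. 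The correct (one-line) fix is to check $a^{-1}Y_a\transverse Y_b$ instead: $a$ preserves $U$ and $\Gamma$, so $a^{-1}Y_a$ is still nested in $U$ and not in $\Gamma$, and the boundary-distance argument (Claim~\ref{claim:W_W'}) applies to the pair $(a^{-1}Y_a,Y_b)$. For hypothesis~\ref{hyp::3}, the bound on $\dist_{Y_a}(Cx_0,bCx_0)$ is \emph{not} a universal constant obtainable from bounded geodesic image alone: it is of the order of the configuration constant $C_0=\sup\{\dist_W(x_0,V):W\nest U,\ W\not\nest\Gamma\}+\sup\{\dist_{W'}(x_0,U):W'\nest V,\ W'\not\nest\Gamma\}$, whose finiteness must itself be proved, and the clean derivation is via the Behrstock inequality between $Y_a$ and $Y_b$. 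Your quantifier order (bounds for \ref{hyp::2} and \ref{hyp::3} independent of the choice of subgroups, then $M$, then $A,B$ deep enough) does tolerate such a configuration-dependent constant, but the argument has to introduce it explicitly, since both $M$ and the separability threshold are calibrated to it.

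Two further steps are left genuinely open. First, $\MCG(U)\cap\MCG(V)=\Z^{|\Gamma|}$ does not follow from "a common element fixes $\Gamma_U$ and $\Gamma_V$": one passes to a power $g^k$, decomposes it into partial pseudo-Anosovs and twist powers, uses Claim~\ref{claim:W_W'} to force every support into $\Gamma$, and then must rule out that $g$ is a nontrivial torsion element modulo the twist flat; the paper does this by capping $\Gamma$ and using torsion-freeness of $\MCG(\widehat U)$, which is exactly the subtlety your sketch of the intersection omits. Second, for hypothesis~\ref{hyp::4} you do not need translation lengths, linear growth of projections, or a torsion-free finite-index subgroup: it suffices that the set $F$ of elements of $\MCG(U)$ displacing a fixed marking $y_U$ by at most a chosen threshold in \emph{every} domain is finite (properness of the action plus the uniqueness axiom), that separability yields $A$ with $A\cap F\subseteq\Z^{|\Gamma|}$, and that the witness domain can always be taken nested in $U$ but not in $\Gamma$ --- because $\MCG(U)$ moves projections to domains not nested in $U$ by a uniformly bounded amount, and because if all large-displacement domains of some $a$ lay in $\Gamma$, then correcting $a$ by a multitwist would produce an element of $(A\cap F)-\Z^{|\Gamma|}$, a contradiction. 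This finite-set-plus-multitwist-correction argument is what replaces the step you leave vague ("image large enough in a finite quotient"), and it sidesteps the torsion worries you raise at the end.
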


The proof will highlight some useful techniques to verify the requirements of Theorem~\ref{thm:amalgamation}. Such tools often use only the existence of a HHG structure for the ambient group, and can therefore be exported to more general settings. 

\begin{proof}
We start with some considerations on the two subsurfaces and their mapping class groups:
    \begin{claim}\label{claim:W_W'}
        Given two subsurfaces $W\nest U$ and $W'\nest V$, if none of the two subsurfaces is a sub-multicurve of $\Gamma$, then $W\transverse W'$.
    \end{claim}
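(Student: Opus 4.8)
The plan is to rule out, one by one, the three alternatives to transversality: that $W$ and $W'$ are equal, that one is nested in the other, or that they are orthogonal (disjoint). Throughout, the key hypothesis is that neither $W$ nor $W'$ is a sub-multicurve of $\Gamma$, so each of them contains at least one curve of its boundary lying outside $\Gamma$; equivalently, $\partial W\cap\Gamma_U\neq\emptyset$ (taking $\partial W$ to include the core if $W$ is an annulus, and remembering $W$ could be an annulus whose core is in $\Gamma_U$) and likewise $\partial W'$ meets $\Gamma_V$ — here I am using that $W\nest U$ forces $\partial W$ to consist of curves that are either in $\partial U$ or supported in the interior of $U$, and similarly for $W'\nest V$. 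Actually the cleaner statement to extract first is: since $U$ is either connected or a multicurve and $W\nest U$ is not a sub-multicurve of $\Gamma$, the subsurface $W$ (or its core) is ``mostly inside'' $U$ and its boundary is disjoint from, or equal to part of, $\partial U$; the same for $W'$ and $V$.

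\textbf{Step 1 (equality and nesting).} If $W=W'$ or, say, $W\nest W'$, then in particular the curves of $\partial W$ can be realised disjointly from $W'$, hence from $\partial W'$, hence (pushing to $U\cap V$) one gets a configuration forcing $W$ to be isotopic into the common part of $U$ and $V$. But $U\cap V$, up to isotopy, is supported near $\Gamma$ together with possibly a thin part; the hypothesis $\dist_{\C(S-\Gamma)}(\Gamma_U,\Gamma_V)\ge 4$ — in particular $\ge 1$, i.e.\ the curves of $\Gamma_U$ and $\Gamma_V$ are \emph{not} disjoint and not equal — means $U$ and $V$ overlap essentially away from $\Gamma$, so there is no room for $W$ to be simultaneously nested in $U$ and in $V$ unless $W$ is a sub-multicurve of $\Gamma$, which is excluded. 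This is the step where one has to be slightly careful about the annular cases and about what ``$U\cap V$'' means up to isotopy; I expect to phrase it via curves rather than via a choice of representative subsurfaces.

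\textbf{Step 2 (orthogonality), the main obstacle.} Suppose $W\orth W'$, i.e.\ $W$ and $W'$ can be isotoped to be disjoint. Pick a curve $\gamma\in\partial W$ with $\gamma\notin\Gamma$ (so $\gamma$ is either in $\Gamma_U$ or properly inside $U$) and a curve $\gamma'\in\partial W'$ with $\gamma'\notin\Gamma$. Disjointness of $W$ and $W'$ gives $i(\gamma,\gamma')=0$, and moreover $\gamma$ is disjoint from $\Gamma_V\subseteq\partial V\subseteq \partial W'\cup(\text{interior curves})$... the point I want is that a boundary curve of $U$ outside $\Gamma$ ends up at distance $\le 2$ from a boundary curve of $V$ outside $\Gamma$ \emph{inside the curve graph of $S-\Gamma$}: indeed $\gamma$ is disjoint from $W'$, hence from its boundary, hence from $\Gamma_V$ up to replacing by an adjacent curve, and symmetrically for $\gamma'$ and $\Gamma_U$. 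Chaining these disjointness relations in $\C(S-\Gamma)$ yields $\dist_{\C(S-\Gamma)}(\Gamma_U,\Gamma_V)\le 3$, contradicting the hypothesis $\ge 4$. The technical heart is verifying that all the curves involved are genuinely curves of $S-\Gamma$ (essential, non-peripheral there) and that the subsurface-projection-type estimates give the claimed distance-$3$ bound; the constant $4$ in the hypothesis is presumably exactly what is needed to absorb the ``$+1$'' or ``$+2$'' slack coming from passing between a subsurface, its boundary, and $\partial U$ or $\partial V$. Having excluded equality, nesting, and orthogonality, transversality is the only remaining possibility, proving the claim.
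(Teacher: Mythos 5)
Your Step 2 is essentially the paper's argument: under disjointness you build a chain of disjoint curves and get $\dist_{\C(S-\Gamma)}(\Gamma_U,\Gamma_V)\le 3$, contradicting the hypothesis $\ge 4$. But Step 1 has a genuine gap. First, the parenthetical reasoning is wrong: $\dist_{\C(S-\Gamma)}(\Gamma_U,\Gamma_V)\ge 1$ only says the multicurves are not equal; non-disjointness needs distance $\ge 2$. More importantly, the key assertion — that because $U$ and $V$ ``overlap essentially away from $\Gamma$'' there is ``no room'' for a subsurface nested in both $U$ and $V$ other than sub-multicurves of $\Gamma$ — is not proved and is false at the level of generality you invoke it: even with $\dist_{\C(S-\Gamma)}(\Gamma_U,\Gamma_V)=2$ there can be a curve disjoint from both boundaries, and hence an annular domain nested in both $U$ and $V$ but not in $\Gamma$. (Also, when $W\nest W'$ the curves of $\partial W$ are \emph{not} disjoint from $W'$; they lie in its closure, so the opening sentence of Step 1 is backwards.) The fix is to run exactly your Step 2 chain in the nesting/equality case: if $W\nest W'$ (or $W=W'$), pick an essential curve $\delta$ contained in $W$ and not isotopic into $\Gamma$; then $\delta$ lies in both $U$ and $V$, hence is disjoint from both $\partial U$ and $\partial V$, giving $\dist_{\C(S-\Gamma)}(\Gamma_U,\Gamma_V)\le 2<4$. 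This is what the paper does: it treats disjointness and nesting uniformly by choosing curves $\delta\subset W$, $\delta'\subset W'$ (not boundary curves) with $\dist_{\C(S-\Gamma)}(\delta,\delta')\le 1$, and concludes in one stroke.

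A second, smaller issue is your choice of \emph{boundary} curves in Step 2. That $\partial W$ contains a curve not isotopic into $\Gamma$ does not follow immediately from ``$W$ is not a sub-multicurve of $\Gamma$'': a priori a non-annular $W\nest U$ could have all its boundary components isotopic into $\Gamma$, and ruling this out requires an argument (using that $U$ is essential with pairwise non-isotopic boundary components, one shows such a $W$ would be either a $\Gamma$-parallel annulus or isotopic to $U$ itself, whose boundary meets $\Gamma_U$). The statement is true, but the paper sidesteps it entirely by taking $\delta$ to be \emph{any} essential curve inside $W$ not isotopic to a curve of $\Gamma$ (which exists since $W$ is either an annulus with core off $\Gamma$, or non-annular and hence contains infinitely many isotopy classes of curves); such a $\delta$ is automatically disjoint from $\partial U\supseteq\Gamma$ and is a genuine vertex of $\C(S-\Gamma)$. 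If you adopt that choice, both your steps collapse to the paper's three-hop estimate and the proof closes.
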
 
    \begin{proof}[Proof of claim \ref{claim:W_W'}]
If by contradiction $W$ and $W'$ were either disjoint, or one contained in the other, then there would be two curves $\delta\subset W$ and $\delta'\subset W$, both disjoint from $\Gamma$, such that $\dist_{\C (S-\Gamma)}(\delta, \delta')\le 1$. However, notice that $\delta$ would either belong to, or be disjoint from, $\partial U$, since $W\nest U$, and the same is true for $\delta'$ and $\partial V$. Therefore
        $\dist_{\C (S-\Gamma)}(\delta, \delta')\ge \dist_{\C (S-\Gamma)}(\Gamma_U, \Gamma_V) -2 \ge 2,$ contradicting our assumption.
    \end{proof}

    \begin{claim}\label{claim:Mu_cap_Mv}
        $\MCG(U)\cap\MCG(V)=\Z^{|\Gamma|}$.
    \end{claim}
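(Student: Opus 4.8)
The plan is to prove the two inclusions separately. The easy inclusion is $\Z^{|\Gamma|}\subseteq \MCG(U)\cap\MCG(V)$: every curve of $\Gamma$ lies in $\partial U$, so the Dehn twist $T_\gamma$ for $\gamma\in\Gamma$, viewed inside $\MCG(S)$ via the geometric embedding, is supported on (a collar of) $\gamma$, which is disjoint from the interior of $U$; hence $T_\gamma$ agrees with the image of a twist in $\MCG(U)$ — more precisely, the annular neighbourhood of $\gamma$ can be taken inside the closed subsurface $U$ (as $\gamma$ is a boundary component of $U$). The same applies with $V$ in place of $U$, so the whole flat $\Z^{|\Gamma|}$ generated by $\{T_\gamma : \gamma\in\Gamma\}$ lies in the intersection.

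The substantive inclusion is $\MCG(U)\cap\MCG(V)\subseteq\Z^{|\Gamma|}$. The plan is to use the HHG structure on $\M(S)$: a mapping class $g$ lies in (a uniform neighbourhood of) $\MCG(U)x_0$ exactly when, for every domain $W$ not nested in $U$ and not orthogonal to $U$, the projection $\dist_W(x_0, gx_0)$ is uniformly bounded — indeed $\MCG(U)$, being a geometric embedded subgroup, is hierarchically quasiconvex with support essentially $\frakS_U\cup\{W: W\orth U\}$, and one can detect membership (coarsely) from the coordinates. Suppose $g\in\MCG(U)\cap\MCG(V)$. If $g\notin\Z^{|\Gamma|}$, then $g$ acts nontrivially on $\C W$ for some subsurface $W$ that is not a sub-multicurve of $\Gamma$ (if $g$ moved only annular domains around curves of $\Gamma$, it would be a product of twists along $\Gamma$, i.e.\ in $\Z^{|\Gamma|}$). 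Since $g\in\MCG(U)$ we may take a large-projection domain $W$ of the form $W\nest U$ (translating by powers of $g$ forces the witnessing domains into the support of the HQC subgroup $\MCG(U)$, via a standard argument using large links / bounded geodesic image). Similarly, since $g\in\MCG(V)$, we may take a large-projection domain $W'\nest V$. But a single element $g$ has large translation in $\C W$ and $\C W'$ for $W\nest U$, $W'\nest V$, neither a sub-multicurve of $\Gamma$; by Claim~\ref{claim:W_W'} we have $W\transverse W'$, and an element cannot have large translation length in two transverse domains simultaneously (this is the usual "a mapping class has a well-defined canonical reduction system and acts with positive translation length only on the subsurfaces in its support, which are pairwise disjoint or nested" fact, recoverable in the HHS language from the Behrstock inequality applied to $g^n x_0$). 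This contradiction shows $g\in\Z^{|\Gamma|}$.

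The main obstacle I expect is making the step "$g\in\MCG(U)$ nontrivial $\Rightarrow$ $g$ has large projection to some $W\nest U$ not a sub-multicurve of $\Gamma$" fully rigorous without invoking the Nielsen–Thurston classification directly — one wants to phrase it so it only uses the HHG axioms and the hierarchical quasiconvexity of geometric embedded subgroups (so the argument generalises, as the authors advertise). The cleanest route is probably: if $g\ne 1$ in $\MCG(S)$ then by the uniqueness axiom there is a domain $W$ with $\dist_W(x_0, g^n x_0)$ unbounded as $n\to\infty$; among all such, the ones with $\dist_W(x_0,g x_0)$ large must (by $g$-equivariance and BGI/large-links, since $g$ preserves $\MCG(U)x_0$ coarsely) be nested in $U$ or orthogonal to $U$; ruling out the orthogonal case and the "sub-multicurve of $\Gamma$" case leaves exactly what we need, and symmetrically for $V$. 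I would also double-check the edge cases where $U$ or $V$ is itself a multicurve, where "$W\nest U$ not a sub-multicurve of $\Gamma$" could be vacuous — but then $\MCG(U)=\Z^{|\partial U|}$ is already abelian and the claim follows by a direct computation comparing canonical reduction systems with $\Gamma$.
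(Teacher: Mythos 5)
Your first inclusion and the ``supports must lie in $\Gamma$'' part of the second are essentially the paper's argument in HHS clothing: the paper applies the canonical (Nielsen--Thurston) form to a power $g^k=\prod g_i$ and uses Claim~\ref{claim:W_W'} to force each active subsurface $R_i$ (nested in both $U$ and $V$, hence, were it not a sub-multicurve of $\Gamma$, transverse to itself) into $\Gamma$; your ``two transverse large-projection domains'' step is the same fact, since the only way to make ``large translation in a domain'' precise for a single element is via the active subsurfaces of its canonical form, which is exactly the tool you end up invoking anyway.

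The genuine gap is the last step: passing from ``all domains where $g$ has unbounded (or large) projection are sub-multicurves of $\Gamma$'' to ``$g\in\Z^{|\Gamma|}$''. This is precisely where roots of multitwists and torsion live: the canonical-form argument only gives $g^k\in\Z^{|\Gamma|}$ for some $k\ge 1$, and an element with $g^k\in\Z^{|\Gamma|}$ but $g\notin\Z^{|\Gamma|}$ has bounded orbits in \emph{every} coordinate space except the annuli of $\Gamma$, so it is invisible to your criterion. Your proposed rigorization via the uniqueness axiom (``if $g\neq 1$ then $\dist_W(x_0,g^nx_0)$ is unbounded for some $W$'') is false exactly for such elements (and for torsion in general), so ``ruling out the sub-multicurve of $\Gamma$ case'' cannot be done by projections alone. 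The paper closes this gap with a genuinely different ingredient: the capping homomorphism $\MCG(U)\to\MCG(\widehat U)$ has kernel $\Z^{|\Gamma|}$ (Farb--Margalit, Prop.~3.19), the image of $g$ is torsion, and $\MCG(\widehat U)$ is torsion-free because $\widehat U$ still has boundary --- this is where the hypothesis $\partial U-\Gamma\neq\emptyset$ is used, and it is not a removable technicality. Your treatment of the multicurve case is fine in spirit (there $g$ is literally a multitwist and $g^k\in\Z^{|\Gamma|}$ forces the exponents outside $\Gamma$ to vanish), but the connected case needs the torsion-freeness argument or an equivalent substitute.
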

    \begin{proof}[Proof of Claim~\ref{claim:Mu_cap_Mv}]
        Clearly $\Z^{|\Gamma|}\le \MCG(U)\cap \MCG(V)$. Conversely, pick an element $g\in \MCG(U)\cap \MCG(V)$. By e.g. \cite[Corollary 13.3]{FarbMargalit}, there exists a power $k$ such that $g^k=\prod_{i=1}^l g_i$, where each $g_i$ is either a partial pseudo-Anosov or a power of a Dehn Twist, and the supports $\{R_i\}$ of the $g_i$ are all pairwise disjoint, closed subsurfaces. 
        
        Now, as $g$ is supported on both $U$ and $V$, each $R_i$ must be nested in both $U$ and $V$, and therefore must be nested in $\Gamma$ by Claim~\ref{claim:W_W'}. In other words, $g^k\in \Z^{|\Gamma|}$.
        
        Now, if one between $U$ and $V$ is a multicurve we immediately get that $g\in \Z^{|\Gamma|}$ as well, and we are done. Otherwise, the above argument shows that $g$ projects to a torsion element of $\MCG(\widehat U)$, where $\widehat U$ is the surface obtained by capping every curve in $\Gamma$ with a once-punctured disk. However, as $\partial U-\Gamma$ is non-empty, $\widehat U$ still has non-empty boundary, so \cite[Corollary 7.3]{FarbMargalit} yields that $\MCG(\widehat U)$ is torsion-free (more precisely, the cited result is stated for surfaces with negative Euler characteristic; however we can always embed $\MCG(\widehat U)$ in the mapping class group of such a surface, for example by gluing a sphere with five disks removed along one of the boundaries of $\widehat U$). Then again $g\in \Z^{|\Gamma|}$, as required.
    \end{proof}

Next, we produce the subgroups $A, B$. Extend $\Gamma$ to a multicurve $\Gamma'$ of maximal cardinality, and let $x_0\in \M(S)$ be any marking supported on $\Gamma'$. Let
    \begin{equation}\label{eq:C0}
        C_0\coloneq \sup\{\dist_W(x_0,V)\,|\,W\nest U,W\not\nest \Gamma\}+\sup\{\dist_{W'}(x_0,U)\,|\,W'\nest V,W'\not\nest \Gamma\}.
    \end{equation}
The above quantity is well-defined, as Claim~\ref{claim:W_W'} tells us that, whenever $W\nest U$ is not a sub-multicurve of $\Gamma$, we have that $W\transverse V$, so that the projection of $V$ to $W$ is well-defined. Furthermore, $C_0$ is finite. Indeed, let $y_V\in \M (S)$ be a marking whose support contains $\partial V$. Then, whenever $W$ is nested in $U$ but not in $\Gamma$, the projection $\rho^V_W$ coincides with the subsurface projection of $y_V$ to $W$. In other words, 
    $$\dist_W(x_0,V)=\dist_W(x_0,y_V)\le E\dist_{\M (S)}(x_0,y_V)+E,$$
    where $E$ is a hierarchical constant for $(\M (S), \frakS)$, and we used that subsurface projections are $E$-coarsely Lipschitz. We can similarly define a marking $y_U$ whose support contains $\partial U$, and use it to bound the second term of Equation~\eqref{eq:C0}. Notice that, in view of the above argument, we can rewrite Equation~\eqref{eq:C0} in the following form:
    \begin{equation}\label{eq:C0_con_y}
        C_0= \sup\{\dist_W(x_0,y_V)\,|\,W\nest U,W\not\nest \Gamma\}+\sup\{\dist_{W'}(x_0,y_U)\,|\,W'\nest V,W'\not\nest \Gamma\}.
    \end{equation}

    Now, let $F\subset \MCG(U)$ be the subset of all elements $a$ such that, for every $Y\in\frakS$, $$\dist_Y(y_U, a(y_U))\le 12C_0+100E+100$$
    Notice that $F$ is finite, by the fact that $\MCG(S)$ acts metrically properly on $\M(S)$, combined with the uniqueness axiom~\eqref{item:dfs_uniqueness} for the HHS $(\M(S),\frakS)$. By Lemma~\ref{lem:separation_boundary} there exists a finite-index subgroup $A\le \MCG(U)$ containing $\Z^{|\Gamma|}$, such that if $a\in F\cap A$ then $a\in \Z^{|\Gamma|}$. One can define $B$ analogously. Notice that $$\Z^{|\Gamma|}\le A\cap B\le \MCG(U)\cap \MCG(V)=\Z^{|\Gamma|},$$
    where the last equality is Claim~\ref{claim:Mu_cap_Mv}.

We are finally ready to show that $A$ and $B$ satisfy the hypotheses of Theorem~\ref{thm:amalgamation}, and therefore
$$\langle A,B\rangle_{\MCG(S)}\cong A *_{\Z^{|\Gamma|}} B.$$
We first describe the subsurfaces $Y_g$ we shall use to prove the Theorem. For every $a\in A-\Z^{|\Gamma|}$ there exists a domain $Y_a$ such that $\dist_{Y_a}(y_U, a(y_U))\ge 12C_0+100E+100$, by how we chose $A$.

We first notice that $Y_a$ must be nested in $U$. Indeed, the action of $\MCG(U)$ fixes $\partial U$ and every curve $\alpha\in \text{supp}(y_U)$ which is disjoint from $U$, and the transversal for each such $\alpha$ is moved within distance at most $4$ in $\C\alpha$ (see e.g. \cite{Mousley}). Furthermore, if a surface $Y$ is either disjoint from $U$, or properly contains $U$, then the subsurface projection of both $y_U$ and $a(y_U)$ only depends on the above data. Therefore $\dist_Y(y_U, a(y_U))\le 4$ whenever $Y\not\nest U$. 

Furthermore, suppose that every $Y_a$ as above was nested in $\Gamma$. Then we could multiply $a$ by a suitable multitwist in $\Z^{|\Gamma|}$ to find an element of $(A\cap F)-\Z^{|\Gamma|}$, contradicting our choice of $A$. Thus pick any $Y_a$ as above which is nested in $U$ but not in $\Gamma$. 

Now let $M=10C_0+100E+40$, and choose $x_0$ as above. We now verify the four requirements from Theorem~\ref{thm:amalgamation}, for every $a\in A-\Z^{|\Gamma|}$ and every $b\in B-\Z^{|\Gamma|}$:
\begin{itemize}
\item \ref{hyp::1} The action of $\Z^{|\Gamma|}$ fixes every curve $\alpha\in \Gamma'-\Gamma$, and the associated transversal (again, up to distance $4$ in $\C\alpha)$. Therefore, as $Y_a\nest U$ but is not nested in $\Gamma$, the subsurface projection of $\Z^{|\Gamma|} x_0$ to $Y_a$ is coarsely the same as that of $x_0$, and the same is true for $a\Z^{|\Gamma|} x_0$. In particular $\max\{\diam_{Y_a}(\Z^{|\Gamma|} x_0), \diam_{Y_a}(a\Z^{|\Gamma|} x_0)\}\le 4\le M/10$.
\item \ref{hyp::4} By the above discussion we get $\dist_{Y_a}(\Z^{|\Gamma|} x_0,a\Z^{|\Gamma|} x_0)\ge\dist_{Y_a}(x_0,ax_0)-8$. Furthermore  Equation~\eqref{eq:C0_con_y} yields that
$$\dist_{Y_a}(x_0,ax_0)-8\ge \dist_{Y_a}(y_U,ay_U)-2C_0-8\ge M.$$
\item \ref{hyp::2} As $Y_a\nest U$, $Y_b\nest V$ and none is nested in $\Gamma$, Claim~\ref{claim:W_W'} tells us that $Y_a\transverse Y_b$.
\item \ref{hyp::3} Firstly $\dist_{Y_b}(\Z^{|\Gamma|} x_0,a\Z^{|\Gamma|} x_0)\le \dist_{Y_b}(x_0,ax_0)$. Moreover
$$\dist_{Y_a}(ax_0, Y_b)\ge \dist_{Y_a}(ax_0, x_0)-\dist_{Y_a}(x_0, Y_b)-\diam_{Y_a}(\rho^{Y_b}_{Y_a})\ge M-C_0-E\ge 2E. $$
Hence by the Behrstock inequality $\dist_{Y_b}(ax_0, Y_a)\le E$. In turn, this means that
$$\dist_{Y_b}(x_0, ax_0)\le \dist_{Y_b}(x_0, Y_a)+ \diam_{Y_b}(\rho^{Y_a}_{Y_b})+\dist_{Y_b}(Y_a, ax_0)\le C_0+2E\le M/10.$$
\end{itemize}
The proof of Theorem~\ref{thm:geometric} is now complete.\end{proof}

\section{A combination theorem for hierarchically quasiconvex subgroups}\label{sec:combination_HQC}
This Section is devoted to the proof of Theorem~\ref{thmintro:amalgamation_HQC} from the introduction, regarding the hierarchical quasiconvexity of the amalgamated free product of two HQC subgroups $A,B$ of a HHG $(G,\frakS)$.

\subsection{Quasiconvexity and friends}\label{subsec:hqc}
\begin{defn}\label{defn:quasiconvex}
A subspace $Z$ of a geodesic metric space is $R$-\emph{quasiconvex}, for some constant $R\ge 0$, if every geodesic segment with endpoints on $Z$ is contained in the $R$-neighbourhood of $Z$.
\end{defn}


For the rest of the section, let $(X,\frakS)$ be a HHS. 
\begin{defn}[HQC]
\label{defn:HQC}
A subspace $Y\subseteq X$ is \emph{$\kappa$-hierarchically quasiconvex}, for some $\kappa\colon [0,+\infty)\to [0,+\infty)$, if:
    \begin{itemize}
        \item For every $U\in\frakS$, $\pi_U(Y)$ is $\kappa(0)$-quasiconvex in $\C U$;
        \item \textbf{Realisation:} for every $x\in X$ and every $R\in[0,+\infty)$, if $\dist_U(x,Y)\le R$ for every $U\in\frakS$ then $\dist_X(x,Y)\le \kappa(R)$.
    \end{itemize}
\end{defn}

\begin{rem}\label{rem:nested_HQC}
    It follows from the definition, together with the fact that coordinate projections are coarsely Lipschitz, that if $Y$ and $Z$ are two subspaces of $X$ within Hausdorff distance $d$, and if $Y$ is $\kappa$-hierarchically quasiconvex, then $Z$ is $\kappa'$-hierarchically quasiconvex, for some function $\kappa'$ only depending on $\kappa$, $d$, and $(X,\frakS)$. This observation will be used repeatedly throughout the section.
\end{rem}

An equivalent definition of hierarchical quasiconvexity, which more closely resembles Definition~\ref{defn:quasiconvex}, involves being closed under certain quasigeodesic paths, called “hierarchy paths”:
\begin{defn}[Hierarchy path]
    For $\lambda \ge 1$, a (not necessarily continuous) path $\gamma\colon [a,b]\subset \mathbb{R}\to X$ is a $\lambda$–hierarchy path if
    \begin{itemize}
        \item $\gamma$ is a $(\lambda, \lambda)$-quasigeodesic,
        \item for each $W\in\frakS$, the path $\pi_w(\gamma)$ is an unparameterised $(\lambda,\lambda)$-quasigeodesic, meaning that it becomes a $(\lambda,\lambda)$-quasigeodesic after precomposing it with an increasing function $g\colon[0,l]\to[a,b]$ mapping $0$ to $a$ and $l$ to $b$.
    \end{itemize}
\end{defn}

\begin{rem}\label{rem:existence_of_hpath}
    \cite[Theorem 5.4]{HHS_I} states that any two points of $X$ are connected by a $\lambda_0$-hierarchy path, for some constant $\lambda_0$ only depending on $(X, \frakS)$. From now on, we will assume that the hierarchical constant $E$ has been chosen greater than this $\lambda_0$.
\end{rem}

\begin{lem}[{\cite[Theorem 5.7]{RST}}]\label{lem:equivalent_def_HQC}
    A subset $Y\subseteq X$ is $\kappa$–hierarchically quasiconvex if and only if there exists a function $\eta\colon [1,+\infty)\to [0,+\infty)$ such that every $\lambda$-hierarchy path with endpoints on $Y$ is contained in the $\eta(\lambda)$-neighbourhood of $Y$. Moreover, $\kappa$ and $\eta$ each determine the other.
\end{lem}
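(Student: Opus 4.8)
The plan is to prove the two implications separately, in each case passing between $X$ and its coordinate spaces by exploiting the stability of quasigeodesics in the $E$-hyperbolic spaces $\C U$.

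\textbf{From HQC to hierarchy-path closure.} Suppose $Y$ is $\kappa$-hierarchically quasiconvex, let $\gamma$ be a $\lambda$-hierarchy path with endpoints $y,y'\in Y$, and let $p=\gamma(t)$ be a point on it. Fix $U\in\frakS$. Then $\pi_U(\gamma)$ is an unparameterised $(\lambda,\lambda)$-quasigeodesic of $\C U$ from $\pi_U(y)$ to $\pi_U(y')$, so, as $\C U$ is $E$-hyperbolic, the Morse lemma provides $D=D(\lambda,E)$ such that the image of $\pi_U(\gamma)$ lies in the $D$-neighbourhood of any geodesic $[\pi_U(y),\pi_U(y')]$, which in turn lies in the $\kappa(0)$-neighbourhood of $\pi_U(Y)$ by quasiconvexity of the latter. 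Hence $\dist_U(p,Y)\le D+\kappa(0)+E=:R(\lambda)$ for every $U$, and the realisation clause of hierarchical quasiconvexity yields $\dist_X(p,Y)\le\kappa(R(\lambda))=:\Lambda(\lambda)$.

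\textbf{From hierarchy-path closure to HQC: quasiconvexity of projections.} Assume every $\lambda$-hierarchy path with endpoints on $Y$ lies in the $\Lambda(\lambda)$-neighbourhood of $Y$. Fix $U$, points $y,y'\in Y$, and a geodesic $g=[\pi_U(y),\pi_U(y')]$ of $\C U$; choose a $\lambda_0$-hierarchy path $\gamma$ from $y$ to $y'$ (Remark~\ref{rem:existence_of_hpath}). Since $\pi_U(\gamma)$ and $g$ share endpoints and $\pi_U(\gamma)$ is an unparameterised $(\lambda_0,\lambda_0)$-quasigeodesic, $g$ lies in the $D(\lambda_0,E)$-neighbourhood of (the image of) $\pi_U(\gamma)$; and since $\gamma\subseteq N_{\Lambda(\lambda_0)}(Y)$ and $\pi_U$ is $(E,E)$-coarsely Lipschitz, $\pi_U(\gamma)\subseteq N_{E\Lambda(\lambda_0)+E}(\pi_U(Y))$. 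Therefore $g\subseteq N_{\kappa(0)}(\pi_U(Y))$ with $\kappa(0):=D(\lambda_0,E)+E\Lambda(\lambda_0)+E$.

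\textbf{From hierarchy-path closure to HQC: the realisation clause, and the main obstacle.} Now suppose $x\in X$ satisfies $\dist_U(x,Y)\le R$ for all $U$. For each $U$ choose a coarse closest-point projection $b_U\in\pi_U(Y)$ of $\pi_U(x)$ inside the $\kappa(0)$-quasiconvex subset $\pi_U(Y)$ of the $E$-hyperbolic space $\C U$, so that $\dist_U(x,b_U)\le R+c$ for a uniform $c$. Using that the coordinates $(\pi_U(x))_U$ of a genuine point satisfy the consistency inequalities with constant $E$, one checks that $(b_U)_U$ is an $R'$-consistent tuple with $R'=R'(R,E,\kappa(0))$, so the Realisation Theorem~\ref{thm:realisation} produces $z\in X$ with $\dist_U(z,b_U)\le\theta(R')$ for all $U$, whence $\dist_U(z,x)\le\theta(R')+R+c$ for all $U$ and, by the Uniqueness axiom, $\dist_X(x,z)\le\theta_u(\theta(R')+R+c)$. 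The remaining, and genuinely delicate, point is to bound $\dist_X(z,Y)$: one has $\dist_U(z,Y)\le\theta(R')+E$ for every $U$, and concludes that $z$ is uniformly close to $Y$ by a bootstrapping argument — either an induction on the complexity of $(X,\frakS)$ (the base case being $X$ quasi-isometric to $\C S$, where the statement is immediate) or the gate-map technology for subsets almost closed under hierarchy paths. This is precisely the content of \cite[Theorem 5.7]{RST}, built on the machinery of \cite{HHS_I}, and I expect organising this bootstrap — in particular ruling out that a hierarchy path issuing from $z$ into $Y$ drifts away from $Y$ — to be the main obstacle. Combining the two displayed bounds gives a function $\kappa$ with $\dist_X(x,Y)\le\kappa(R)$; finally, the explicit formulas obtained in the two directions show that $\kappa$ and $\Lambda$ each determine the other.
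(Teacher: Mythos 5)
First, note that the paper itself gives no argument for this statement: it is quoted verbatim as \cite[Theorem 5.7]{RST}, so there is no internal proof to compare against. Judged on its own terms, your proposal correctly handles the easy direction (HQC implies closure under hierarchy paths, via Morse stability of unparameterised quasigeodesics in each $E$-hyperbolic $\C U$ plus the realisation clause) and the first half of the converse (quasiconvexity of each $\pi_U(Y)$). But the second half of the converse -- deducing the realisation clause from closure under hierarchy paths -- is where the whole content of the theorem lies, and there your argument has a genuine gap, which moreover you cannot fill by citing \cite[Theorem 5.7]{RST}, since that is precisely the statement being proved. Concretely, the detour through the Realisation Theorem~\ref{thm:realisation} is a no-op: you start from $x$ with $\dist_U(x,Y)\le R$ for all $U$, build $b_U\in\pi_U(Y)$, realise to a point $z$, and end with $\dist_U(z,Y)\le\theta(R')+E$ for all $U$ -- which is exactly the hypothesis you started from, with a worse constant and for a different point. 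Bounding $\dist_X(z,Y)$ is the same problem as bounding $\dist_X(x,Y)$, so nothing has been gained, and the ``bootstrapping argument'' you gesture at (induction on complexity, or gates) is left entirely unspecified; that bootstrap is the theorem.

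If you want a self-contained route using only tools already quoted in this paper (and which does not pass through \cite[Theorem 5.7]{RST} itself), argue via hierarchy path hulls: if every $\lambda$-hierarchy path with endpoints on $Y$ stays in $N_{\Lambda(\lambda)}(Y)$, then $P^1_{\ov\lambda}(Y)\subseteq N_{\Lambda(\ov\lambda)}(Y)$, and iterating (extending a hierarchy path with endpoints in a neighbourhood of $Y$ to one with endpoints on $Y$, at the cost of a larger constant $\lambda'$ depending only on $\Lambda$, $\ov\lambda$, $E$ -- the same trick used in the proof of Lemma~\ref{lem:nbh_chiuso_per_hp}) one gets that $P^N_{\ov\lambda}(Y)$ lies within bounded Hausdorff distance of $Y$. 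By Lemma~\ref{lem:hierarchyhull_HQC} (which rests on \cite[Theorem 5.2]{RST} and \cite[Lemma 6.2]{HHS_II}, not on the statement at hand) the hull $P^N_{\ov\lambda}(Y)$ is $\ov\kappa$-HQC, and Remark~\ref{rem:nested_HQC} then transfers hierarchical quasiconvexity to $Y$ with a gauge depending only on $\Lambda$ and $(X,\frakS)$. With that replacement the quantitative claim that $\kappa$ and $\Lambda$ determine each other also follows from the explicit constants in the two directions.
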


The following combines \cite[Definition 5.4 and Lemma 5.5]{HHS_II}:
\begin{defn}[Gate]\label{defn:gate} Let $Y\subseteq X$ be $\kappa$-hierarchically quasiconvex. There exists a coarsely Lipschitz, coarse retraction $\gate_Y\colon X\to Y$, called the \emph{gate} on $Y$, such that, for every $x\in X$ and every $W\in \frakS$, $\pi_W(\gate_Y(x))$ uniformly coarsely coincides with the coarse closest point projection of $\pi_W(x)$ to the $\kappa(0)$-quasiconvex subset $\pi_W(Y)\subseteq \C W$.
\end{defn}

Given two HQC subspaces $A,B$, the gate of one onto the other can be characterised in terms of the \emph{coarse intersection} of $A$ and $B$:

\begin{lem}[{\cite[Lemma 4.10]{quasiflat}}]\label{lem:gate=intersection}
    Let $A$ and $B$ be $\kappa$-hierarchically quasiconvex subsets of $X$, and let $\dist_X(A,B)=r$. There exist non-negative constants $R_0$ and $D_1$, both depending only on $\kappa$ and $r$, such that $\dist_{Haus}(N_{R_0}(A)\cap N_{R_0}(B), \gate_B(A))\le D_1$.
\end{lem}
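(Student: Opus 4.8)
The plan is to establish the two coarse inclusions $\gate_B(A)\subseteq N_{R_0}(A)\cap N_{R_0}(B)$ and $N_{R_0}(A)\cap N_{R_0}(B)\subseteq N_{D_1}(\gate_B(A))$ separately, keeping track throughout that every constant depends only on $\kappa$ and $r$ (together with the fixed HHS constant $E$). Fix once and for all $a_0\in A$ and $b_0\in B$ with $\dist_X(a_0,b_0)\le r+1$, and recall that $\gate_B(A)$ lies within uniformly bounded distance of $B$, so one half of the first inclusion will be automatic once $R_0$ is chosen large enough.

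For the first inclusion it then remains to bound $\dist_X(\gate_B(a),A)$ uniformly over $a\in A$. By the Realisation clause of Definition~\ref{defn:HQC} applied to the $\kappa$-hierarchically quasiconvex set $A$, it suffices to produce a constant $D_2$ with $\dist_U(\gate_B(a),A)\le D_2$ for every $U\in\frakS$. By Definition~\ref{defn:gate}, $\pi_U(\gate_B(a))$ coarsely coincides with the closest-point projection of $\pi_U(a)$ onto the $\kappa(0)$-quasiconvex set $\pi_U(B)\subseteq\C U$; and since the coordinate projections are $(E,E)$-coarsely Lipschitz, $\dist_U(\pi_U(A),\pi_U(B))\le\dist_U(a_0,b_0)\le E(r+1)+E=:r'$. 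So inside the $E$-hyperbolic space $\C U$ we are projecting a point of a $\kappa(0)$-quasiconvex set $P:=\pi_U(A)$ onto a $\kappa(0)$-quasiconvex set $Q:=\pi_U(B)$ with $\dist(P,Q)\le r'$, and a standard hyperbolic-geometry fact (the strong contraction property of nearest-point projections, equivalently the description of the coarse intersection of two quasiconvex subsets of a hyperbolic space) gives that such a projection lies within some $D_2=D_2(E,\kappa(0),r')$ of $P$. Feeding this back through the Realisation clause gives $\dist_X(\gate_B(a),A)\le\kappa(D_2)$, so with $R_0:=\max\{E,\kappa(D_2),r+1\}$ one gets $\gate_B(A)\subseteq N_{R_0}(A)\cap N_{R_0}(B)$, in particular a nonempty set.

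For the reverse inclusion, take $x\in N_{R_0}(A)\cap N_{R_0}(B)$ and put $a:=\gate_A(x)\in A$. Since $\gate_A$ is coarsely Lipschitz and restricts to the coarse identity on $A$, $\dist_X(x,a)$ is bounded by a function of $\dist_X(x,A)\le R_0$; hence $\dist_X(a,B)\le\dist_X(a,x)+\dist_X(x,B)$ is bounded in terms of $R_0$, and, by the same reasoning applied to $\gate_B$, so is $\dist_X(a,\gate_B(a))$. The triangle inequality then gives $\dist_X(x,\gate_B(a))\le D_1$ for a constant $D_1=D_1(\kappa,r)$, and $\gate_B(a)\in\gate_B(A)$. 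Putting the two inclusions together yields $\dist_{Haus}(N_{R_0}(A)\cap N_{R_0}(B),\gate_B(A))\le D_1$, as required.

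The one step that genuinely leaves the toolkit of the excerpt is the hyperbolic-geometry input used above: the uniform statement that, in an $E$-hyperbolic geodesic space, the nearest-point projection of a $\kappa(0)$-quasiconvex set $P$ onto a $\kappa(0)$-quasiconvex set $Q$ with $\dist(P,Q)\le r'$ is contained in a $D_2(E,\kappa(0),r')$-neighbourhood of $P$ (it is automatically in a uniform neighbourhood of $Q$). I expect this to be the main obstacle to a fully self-contained argument; it is proved by the usual contraction argument, distinguishing whether a geodesic joining the relevant coordinates stays far from $Q$ or re-enters a bounded neighbourhood of it. Everything else reduces to the triangle inequality, the coarse Lipschitzness of gates and coordinate projections, and the Realisation clause built into the definition of hierarchical quasiconvexity; notably, neither the uniqueness axiom nor the Realisation theorem for $X$ is needed.
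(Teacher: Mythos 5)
Your argument is correct. Note, though, that the paper does not prove this statement at all: it is imported verbatim from the literature (the citation \cite[Lemma 4.10]{quasiflat}), so there is no in-paper proof to compare against; your write-up is essentially the standard argument one would give, and matches the spirit of the cited source (coordinate-wise description of $\gate_B$, the hyperbolic-geometry fact in each $\C U$, then the realisation clause of hierarchical quasiconvexity, with the reverse inclusion handled by coarse Lipschitzness of gates). The one external ingredient you flag is genuinely true and uniform: if $P,Q$ are $\kappa(0)$-quasiconvex in an $E$-hyperbolic space with $\dist(P,Q)\le r'$, then for $p\in P$ with nearest point $q\in Q$ and any $p_0\in P$, $q_0\in Q$ with $\dist(p_0,q_0)\le r'+1$, the concatenation $[p,q]\cup[q,q_0]$ is a uniform quasigeodesic (coarse distance-minimality of nearest-point projection to a quasiconvex set), so $q$ lies uniformly close to a geodesic $[p,q_0]$, hence, by slimness of the triangle $p,p_0,q_0$, uniformly close to $N_{\kappa(0)}(P)\cup N_{r'+1}(p_0)$; this gives your $D_2(E,\kappa(0),r')$. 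With that supplied, all constants depend only on $\kappa$, $r$ and the HHS constants, as required.
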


Furthermore, if $G$ is a group and $A$, $B$ are \emph{subgroups} of $G$, then by \cite[Lemma 4.5]{Hruska_Wise} there exist $D_2$, depending on $R_0$ and on a choice of a word metric for $G$, such that $\dist_{Haus}(N_{R_0}(A)\cap N_{R_0}(B), A\cap B)\le D_2.$ Thus, for HQC subgroups of a HHG, the gate of one onto the other is within finite Hausdorff distance from the actual intersection:

\begin{lem}\label{lem:gate=intersection_for_groups}
    Let $A$ and $B$ be $\kappa$-hierarchically quasiconvex subgroups of a HHG $(G,\frakS)$. There exists a constant $L$, which only depends on $(G,\frakS)$ and $\kappa$, such that $\dist_{Haus}(A\cap B, \gate_B(A))\le L.$
\end{lem}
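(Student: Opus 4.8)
The plan is to chain together the two Hausdorff-distance estimates already available in the excerpt. By Lemma~\ref{lem:gate=intersection}, applied with $r=\dist_X(A,B)$ (which is $0$ here, since $A$ and $B$ both contain the identity, or at worst a bounded constant coming from the quasi-isometry between $G$ and $X$), there are constants $R_0$ and $D_1$, depending only on $\kappa$ and this $r$ — hence ultimately only on $(G,\frakS)$ and $\kappa$ — such that
$$\dist_{Haus}\bigl(N_{R_0}(A)\cap N_{R_0}(B),\ \gate_B(A)\bigr)\le D_1.$$
Then I would invoke \cite[Lemma 4.5]{Hruska_Wise}, as quoted in the excerpt, which gives a constant $D_2$, depending on $R_0$ and the chosen word metric on $G$, with
$$\dist_{Haus}\bigl(N_{R_0}(A)\cap N_{R_0}(B),\ A\cap B\bigr)\le D_2.$$
The triangle inequality for Hausdorff distance then yields $\dist_{Haus}(A\cap B,\gate_B(A))\le D_1+D_2=:L$, and by construction $L$ depends only on $(G,\frakS)$ and $\kappa$.

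The one point that needs a word of care is the dependence of $R_0$: in Lemma~\ref{lem:gate=intersection} it depends on $\kappa$ and on $r=\dist_X(A,B)$, and then $D_2$ depends on $R_0$. So I must first pin down that $\dist_X(A,B)$ is uniformly bounded. Since $A$ and $B$ are subgroups they both contain $1\in G$, so $\dist_G(A,B)=0$; transporting through the quasi-isometry $G\to X$ furnished by the Milnor--\v{S}varc Lemma (Definition~\ref{def:rel_HHG}), we get $\dist_X(A,B)\le D$ for a constant $D$ depending only on $(G,\frakS)$. Feeding this bound into Lemma~\ref{lem:gate=intersection} makes $R_0$ (and hence $D_1$) depend only on $(G,\frakS)$ and $\kappa$, and then $D_2$ likewise depends only on $(G,\frakS)$ (the word metric being fixed once and for all as part of the HHG data).

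I do not anticipate a genuine obstacle here: this lemma is essentially a bookkeeping corollary assembling two black boxes. The only thing to watch is notational consistency — making sure the "$N_{R_0}(A)\cap N_{R_0}(B)$" appearing in the two cited lemmas is literally the same set (same $R_0$), which is why one should quote Lemma~\ref{lem:gate=intersection} first to fix $R_0$ and only then apply \cite[Lemma 4.5]{Hruska_Wise} with that same $R_0$. With that ordering the proof is a two-line triangle inequality.

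\begin{proof}
Since $A$ and $B$ are subgroups of $G$, both contain the identity, so they lie at distance $0$ in any word metric on $G$; transporting along a quasi-isometry $G\to X$ provided by the Milnor--\v{S}varc Lemma, we get $\dist_X(A,B)\le D$ for some $D=D(G,\frakS)$. By Lemma~\ref{lem:gate=intersection}, applied with this bound on $\dist_X(A,B)$, there are constants $R_0$ and $D_1$, depending only on $\kappa$ and $(G,\frakS)$, with
$$\dist_{Haus}\bigl(N_{R_0}(A)\cap N_{R_0}(B),\ \gate_B(A)\bigr)\le D_1.$$
By \cite[Lemma 4.5]{Hruska_Wise}, applied with the same $R_0$, there is a constant $D_2$, depending on $R_0$ and the fixed word metric on $G$ — hence only on $(G,\frakS)$ and $\kappa$ — with
$$\dist_{Haus}\bigl(N_{R_0}(A)\cap N_{R_0}(B),\ A\cap B\bigr)\le D_2.$$
The triangle inequality for Hausdorff distance now gives $\dist_{Haus}(A\cap B,\gate_B(A))\le D_1+D_2$, and setting $L:=D_1+D_2$ completes the proof.
\end{proof}
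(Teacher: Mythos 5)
Your proposal is correct and follows essentially the same route as the paper: the paper proves this lemma precisely by combining Lemma~\ref{lem:gate=intersection} with \cite[Lemma 4.5]{Hruska_Wise} and the triangle inequality for Hausdorff distance. Your extra remark pinning down that $\dist_X(A,B)$ is uniformly bounded (since both subgroups contain the identity) is a sensible bookkeeping point that the paper leaves implicit.
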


\subsection{Hierarchical quasiconvexity of a union}\label{subsec:aub}
In hyperbolic spaces, a union of quasiconvex subspaces is again quasiconvex. We provide a proof of this fact for completeness:
\begin{lem}\label{lem:union_of_quasiconvex_in_hyp}
    Let $X$ be a $\delta$-hyperbolic space, and let $Y,Y'$ be two $R$-quasiconvex subspaces. Then $Y\cup Y'$ is $\widehat R$-quasiconvex, where $$\widehat R=R+2\delta+\dist_X(Y,Y')+1.$$ 
\end{lem}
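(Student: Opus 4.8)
The plan is to take a geodesic $[p,q]$ with both endpoints in $Y\cup Y'$ and show every point on it is within $\widehat R$ of $Y\cup Y'$. If $p$ and $q$ both lie in $Y$ (or both in $Y'$), the conclusion is immediate from $R$-quasiconvexity of the relevant set, so the only case to treat is $p\in Y$ and $q\in Y'$ (up to swapping). First I would pick points $y\in Y$ and $y'\in Y'$ realising (coarsely) the distance $\dist_X(Y,Y')$, so $\dist_X(y,y')\le \dist_X(Y,Y')+1$ (the $+1$ absorbs the fact that the infimum may not be attained; one could also phrase this with $\dist_X(Y,Y')$ exactly if one prefers coarse statements). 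Then I would consider the geodesic quadrilateral (really, a concatenation of four geodesics) with vertices $p, y, y', q$: namely $[p,y]$, $[y,y']$, $[y',q]$, and the original side $[p,q]$.

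The key step is thin-quadrilateral hyperbolicity: in a $\delta$-hyperbolic space a geodesic quadrilateral is $2\delta$-slim, so every point $x\in[p,q]$ lies within $2\delta$ of $[p,y]\cup[y,y']\cup[y',q]$. Now I distinguish three subcases according to which of the three sides $x$ is close to. If $x$ is $2\delta$-close to $[p,y]$, then since $p,y\in Y$ and $Y$ is $R$-quasiconvex, $[p,y]\subseteq N_R(Y)$, so $x\in N_{R+2\delta}(Y)$. Symmetrically if $x$ is $2\delta$-close to $[y',q]$ then $x\in N_{R+2\delta}(Y')$. Finally, if $x$ is $2\delta$-close to $[y,y']$, then since $[y,y']$ has length at most $\dist_X(Y,Y')+1$ and $y\in Y$, every point of $[y,y']$ is within $\dist_X(Y,Y')+1$ of $Y$, hence $x\in N_{R+2\delta + \dist_X(Y,Y')+1}(Y)$. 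In all cases $x\in N_{\widehat R}(Y\cup Y')$ with $\widehat R = R + 2\delta + \dist_X(Y,Y') + 1$, which is what we wanted.

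I do not expect any serious obstacle here; this is a routine slim-polygon argument. The only mild subtlety is bookkeeping the constant and handling the possibility that the infimum defining $\dist_X(Y,Y')$ is not realised, which is why I build in the harmless $+1$; if one works with the standard convention that these subsets are nonempty and one is content with a coarse bound, the $+1$ is cosmetic. One should also note at the start that $Y\cup Y'$ is nonempty and that the statement is vacuous if either set is empty, so the interesting content is exactly the cross case treated above.
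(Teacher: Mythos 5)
Your proposal is correct and follows essentially the same argument as the paper: pick nearly-closest points $y\in Y$, $y'\in Y'$, form the geodesic quadrilateral on $p,y,y',q$, apply $2\delta$-slimness of quadrilaterals, and bound each side using $R$-quasiconvexity or the length bound $\dist_X(Y,Y')+1$ on $[y,y']$. The constant bookkeeping matches the paper's $\widehat R=R+2\delta+\dist_X(Y,Y')+1$, so there is nothing to add.
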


\begin{proof}
    Let $\gamma$ be a geodesic segment with endpoints $a,b\in Y\cup Y'$. If $a$ and $b$ are both in $Y$, or both in $Y'$, then $\gamma$ is in the $R$-neighbourhood of $Y\cup Y'$, by quasiconvexity of each subset. Thus, suppose without loss of generality that $a\in Y$ and $b\in Y'$. Let $p\in Y$ and $p'\in Y'$ be such that $\dist_X(p,p')\le \dist_X(Y,Y')+1$, and choose geodesics $[a,p]$, $[p, p']$, and $[p', b]$. Then $\gamma$ lies in the $2\delta$-neighbourhood of $[a,p]\cup [p, p']\cup [p', b]$, as geodesic quadrangles are $2\delta$-slim in hyperbolic spaces. In turn, $[a,p]\cup [p, p']\cup [p', b]$ is contained in the $(R+\dist_X(Y,Y')+1)$-neighbourhood of $Y\cup Y'$, and the conclusion follows.
\end{proof}

Now we want to establish an analogue of Lemma~\ref{lem:union_of_quasiconvex_in_hyp} for HQC subspaces of a HHS. More precisely, we shall prove that the necessary and sufficient condition for the union to be HQC is the following. Recall that, given two subspaces $C\subseteq A$ of a metric space $X$, we say that $C$ is $R$-dense in $A$ if $A\subseteq N_R(C)$.

\begin{defn}\label{defn:fill_all_gaps}
    Let $(X,\frakS)$ be a HHS, and let $A,B\subseteq X$ be hierarchically quasiconvex subspaces. We say that $A$ and $B$ \emph{fill all squares} if there exists a constant $T$ such that, whenever $U,V\in\frakS$ are orthogonal, either $\pi_U(\gate_A(B))$ is $T$-dense in $\pi_U(A)$, or $\pi_V(\gate_B(A))$ is $T$-dense in $\pi_V(B)$. 
\end{defn}
Notice that, if $A$ and $B$ fill all squares for some constant $T$, then they also fill all squares for any bigger constant $T'\ge T$.

\begin{rem}\label{rem:L-shape}
Definition~\ref{defn:fill_all_gaps} roughly forbids the following situation, which gives the name to the property. Let $X=\mathbb{R}^2$ with the HHS structure coming from the usual Cartesian coordinates, let $A$ be the $x$-axis and $B$ the $y$-axis. Both $A$ and $B$ are hierarchically quasiconvex, as they correspond to factors of the product structure. However, $A\cup B$ is not hierarchically quasiconvex, as it does not satisfy the realisation property: any point $p\in\mathbb{R}^2$ has the same $x$-coordinate as some point in $A$ and the same $y$-coordinate as some point in $B$, but can be arbitrarily far from $A\cup B$. In other words, $A$ and $B$ leave some gaps in the square. 
\end{rem}

%

%

\begin{thm}\label{lem:AUB}
    Let $(X,\frakS)$ be a HHS, and let $A,B\subseteq X$ be $\kappa$-hierarchically quasiconvex subspaces. Then $A$ and $B$ fill all squares (Definition~\ref{defn:fill_all_gaps}), for some constant $T\ge 0$, if and only if $A\cup B$ is $\widetilde\kappa$-hierarchically quasiconvex, where $T$ and $\widetilde\kappa$ each determine the other (together with $\kappa$ and $\dist_X(A,B)$).
\end{thm}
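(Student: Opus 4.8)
\emph{Reduction.} The plan is to verify the two clauses of Definition~\ref{defn:HQC} separately. The quasiconvexity-of-projections clause is automatic and uses neither direction of the equivalence: $\pi_U(A\cup B)=\pi_U(A)\cup\pi_U(B)$ is a union of two $\kappa(0)$-quasiconvex subsets of the $E$-hyperbolic space $\C U$, and since $\pi_U$ is $(E,E)$-coarsely Lipschitz one has $\dist_{\C U}(\pi_U(A),\pi_U(B))\le E\dist_X(A,B)+2E$, so Lemma~\ref{lem:union_of_quasiconvex_in_hyp} makes it uniformly quasiconvex. Hence everything reduces to showing that \emph{$A$ and $B$ fill all squares with constant $T$} is equivalent to \emph{$A\cup B$ satisfies the realisation clause} with some $\widetilde\kappa$, with $T$ and $\widetilde\kappa$ mutually controlled given $\kappa$ and $r:=\dist_X(A,B)$. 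I would use throughout that $\gate_A(B)$ and $\gate_B(A)$ are within Hausdorff distance $D_0=D_0(\kappa,r)$ of each other (Lemma~\ref{lem:gate=intersection}), and the elementary fact that if $q$ lies at distance $>d$ from the nearest-point projection of a set $P$ onto a set $Q$ with $q\in Q$, then $q$ lies at distance $>d/2-O(1)$ from $P$ itself.

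\emph{Fill squares $\Rightarrow$ realisation.} Let $x\in X$ have $\dist_U(x,A\cup B)\le R$ for every $U$, and fix a large threshold $H=H(R,T,\kappa,E,r)$. If $\dist_X(x,A)\le\kappa(H)$ or $\dist_X(x,B)\le\kappa(H)$ we are done; otherwise the realisation clause for $A$ gives $W_A$ with $\dist_{W_A}(x,A)>H$, hence $\dist_{W_A}(x,B)\le R$, and symmetrically $W_B$ with $\dist_{W_B}(x,B)>H$, $\dist_{W_B}(x,A)\le R$. With $a=\gate_A(x)$, $b=\gate_B(x)$, both $\dist_{W_A}(a,b)$ and $\dist_{W_B}(a,b)$ are $\gtrsim H$. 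The crux is that \emph{$W_A\orth W_B$}. Transversality is excluded by feeding $a,b,x$ into the Behrstock inequality: the largeness of the two projections forces a configuration in which one of $\dist_{W_A}(x,a)$, $\dist_{W_B}(x,b)$ is $O(E)$, against the choice of $W_A,W_B$. Nesting, say $W_A\propnest W_B$, is excluded in two moves. First, the consistency inequalities at $x$ and at $a$, together with bounded geodesic image (making $\rho^{W_B}_{W_A}$ coarsely constant along geodesics of $\C W_B$ avoiding $N_E(\rho^{W_A}_{W_B})$), force $\pi_{W_B}(x)$ to lie $O(R+E)$-close to $\rho^{W_A}_{W_B}$: otherwise $\pi_{W_A}(x)\approx\rho^{W_B}_{W_A}(\pi_{W_B}(x))\approx\rho^{W_B}_{W_A}(\pi_{W_B}(a))\approx\pi_{W_A}(a)$, contradicting $\dist_{W_A}(x,a)>H$. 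Second, then $\pi_{W_B}(B)$ is $H$-far from $\rho^{W_A}_{W_B}$; since $\pi_{W_B}(B)$ is quasiconvex, bounded geodesic image makes $\rho^{W_B}_{W_A}$ coarsely constant on it, and consistency for points of $B$ then gives $\diam_{\C W_A}(\pi_{W_A}(B))=O(E)$; but $\pi_{W_A}(x)$ is $R$-close to that set and $H$-far from $\pi_{W_A}(A)$, contradicting $\dist_{W_A}(A,B)\le Er+2E$. The case $W_B\propnest W_A$ is symmetric (using $b$ in place of $a$). Once $W_A\orth W_B$, I would apply fill-all-squares to the pair $(W_B,W_A)$: if $\pi_{W_B}(\gate_A(B))$ is $T$-dense in $\pi_{W_B}(A)$, then since $\dist_{W_B}(x,A)\le R$ and $\gate_A(B)\subseteq N_{D_0}(\gate_B(A))\subseteq N_{D_0}(B)$ one gets $\dist_{W_B}(x,B)=O(R+T+E+ED_0)$, contradicting $\dist_{W_B}(x,B)>H$ once $H$ exceeds this; the other alternative contradicts $\dist_{W_A}(x,A)>H$. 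Thus $\dist_X(x,A\cup B)\le\widetilde\kappa(R):=\kappa(H(R))$ (enlarged by the reduction constant).

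\emph{Realisation $\Rightarrow$ fill squares.} I would argue contrapositively: if $A,B$ fail to fill squares at scale $T$, I would exhibit $z\in X$ with $\dist_U(z,A\cup B)=O(E)$ for all $U$ yet $\dist_X(z,A\cup B)\gtrsim T$, which contradicts the realisation clause for $A\cup B$ as $\widetilde\kappa$ is fixed and $T\to\infty$. Choose $U\orth V$, $p\in\pi_U(A)$ with $\dist_U(p,\pi_U(\gate_A(B)))>T$, $q\in\pi_V(B)$ with $\dist_V(q,\pi_V(\gate_B(A)))>T$, and pick $a\in A$, $b\in B$, $c\in\gate_A(B)$ with $\pi_U(a)\approx p$, $\pi_V(b)\approx q$. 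Let $z$ be a realisation point (Theorem~\ref{thm:realisation}) of the tuple $\tau_W=\pi_W(a)$ for $W\nest U$, $\tau_W=\pi_W(b)$ for $W\nest V$, $\tau_W=\rho^U_W$ for $W$ transverse to or strictly containing $U$ (consistent with $\rho^V_W$ when both apply, by Lemma~\ref{lem:close_proj_of_orthogonals}), the mirror choices for $V$, and $\tau_W=\pi_W(c)$ when $W\orth U$ and $W\orth V$; since $U\orth V$, no domain lies in two classes. Checking $\tau$ is $O(E)$-consistent is a finite case analysis over the relative position of two domains carrying different coordinate types, the only non-immediate cases pairing a genuine-point coordinate with a $\rho$- or $c$-coordinate, settled by consistency of $a$ (resp.\ $b$, $c$) and Lemma~\ref{lem:close_proj_of_orthogonals}. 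Then $\dist_W(z,A\cup B)=O(E)$ for all $W$ (immediate when $\tau_W$ is an $a$-, $b$-, or $c$-coordinate; when $\tau_W=\rho^U_W$, consistency of $a$ gives $\dist_W(a,\rho^U_W)\le E$, so $z$ is $O(E)$-close to $A$ there). On the other hand, by the projection fact from the first paragraph, $p$ is $\gtrsim T$-far from $\pi_U(B)$ and $q$ is $\gtrsim T$-far from $\pi_V(A)$; as $\pi_U(z)\approx p$, $\pi_V(z)\approx q$, this gives $\dist_U(z,B)\gtrsim T$ and $\dist_V(z,A)\gtrsim T$, hence $\dist_X(z,A),\dist_X(z,B)\gtrsim T/E$, i.e.\ $\dist_X(z,A\cup B)\gtrsim T/E-O(1)$, as required; this also shows $T$ is controlled by $\widetilde\kappa$, $\kappa$, $r$.

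I expect the main obstacle to be the orthogonality extraction $W_A\orth W_B$ in the second step, specifically excluding the nested configurations, since that is where the hierarchy axioms, the quasiconvexity of $\pi_W(A)$ and $\pi_W(B)$, and bounded geodesic image have to be combined and the correct intermediate quantity ($\pi_{W_B}(x)$ being close to $\rho^{W_A}_{W_B}$) isolated; the consistency verification for the tuple $\tau$ in the last step is lengthy but routine.
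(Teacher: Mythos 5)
Your direction ``fill all squares $\Rightarrow$ realisation for $A\cup B$'' is essentially the paper's proof of Lemma~\ref{lem:AUB_2}: the same pair of bad domains, the same exclusion of transversality (Behrstock) and of nesting (bounded geodesic image plus quasiconvexity of the projections of $A$ and $B$), and the same use of filling squares in the orthogonal case; up to constants that part is correct.

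The gap is in the converse direction. When you check that your realisation point $z$ satisfies $\dist_W(z,A\cup B)=O(E)$ for the domains $W$ whose coordinate is $\rho^U_W$ (i.e.\ $W\transverse U$ or $U\propnest W$), you assert that ``consistency of $a$ gives $\dist_W(a,\rho^U_W)\le E$''. The consistency/Behrstock inequality does not say this: it only bounds the \emph{minimum} of $\dist_W(a,\rho^U_W)$ and $\dist_U(a,\rho^W_U)$ (respectively the nested variant), and for a general point $a$ the first quantity can be arbitrarily large. Nor is the statement you actually need --- that $\rho^U_W$ is uniformly close to $\pi_W(A)$ for \emph{every} such $W$ --- true for an arbitrary hierarchically quasiconvex set; it holds in this situation only because $\diam_U(A)\ge\dist_U(a,\gate_A(B))>T$ is large. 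This is precisely the nontrivial step in the paper's Lemma~\ref{lem:AUB_1} (Claim~\ref{claim:p_close_to_AuB}): since the $U$-projection of $A$ is large, a hierarchy path from $a$ to $\gate_A(B)$ passes uniformly close to the product region $P_U$ by \cite[Proposition 4.24]{RST}; by hierarchical quasiconvexity of $A$ (via Lemma~\ref{lem:equivalent_def_HQC}) that path stays uniformly close to $A$, so $\pi_W(A)$ is uniformly close to $\pi_W(P_U)$, which coarsely is $\rho^U_W$. This is also where the threshold ``$T$ large enough'' enters the argument. Without this (or an equivalent) argument, $z$ is not known to have all coordinates uniformly close to $A\cup B$, and your final contradiction with the realisation property of $A\cup B$ does not go through. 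The remaining ingredients of your construction --- the mixed tuple supported on the square $U\orth V$, its consistency check, and the lower bound $\dist_X(z,A\cup B)\gtrsim T/E$ --- do match the paper's argument, so repairing this single claim along the above lines would complete the proof.
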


We split the two implications of Theorem~\ref{lem:AUB} into Lemmas~\ref{lem:AUB_1} and~\ref{lem:AUB_2} below.

\begin{lem}\label{lem:AUB_1}
    Let $(X,\frakS)$ be a HHS, and let $A,B\subseteq X$ be $\kappa$-hierarchically quasiconvex subspaces. If $A\cup B$ is $\widetilde\kappa$-hierarchically quasiconvex then $A$ and $B$ fill all squares (Definition~\ref{defn:fill_all_gaps}), for some constant $T$ which only depends on $(X,\frakS)$, $\kappa$, and $\widetilde\kappa$.
    \end{lem}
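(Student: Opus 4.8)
The plan is to prove the contrapositive: if $A$ and $B$ do \emph{not} fill all squares, then $A\cup B$ fails the realisation property in Definition~\ref{defn:HQC}, hence cannot be $\widetilde\kappa$-hierarchically quasiconvex for any $\widetilde\kappa$. So suppose that for every large constant $T$ there are orthogonal domains $U, V\in\frakS$ with $\pi_U(\gate_A(B))$ not $T$-dense in $\pi_U(A)$ and $\pi_V(\gate_B(A))$ not $T$-dense in $\pi_V(B)$. I will produce, for each such $T$, a point $x_T\in X$ that projects uniformly close to $A\cup B$ in every coordinate space but lies at distance roughly $T$ (up to uniform additive/multiplicative error) from $A\cup B$.

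\medskip

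The key steps, in order. \textbf{(1)} Fix $T$ and the witnessing pair $U\orth V$. By hypothesis choose $a\in A$ whose projection $\pi_U(a)$ is more than $T$-far from $\pi_U(\gate_A(B))$, and $b\in B$ with $\pi_V(b)$ more than $T$-far from $\pi_V(\gate_B(A))$; note $\pi_U(\gate_A(B))$ coarsely coincides with the closest-point projection of $\pi_U(B)$ onto $\pi_U(A)$ by Definition~\ref{defn:gate}, and similarly for $V$. \textbf{(2)} Build the candidate point via the Realisation Theorem~\ref{thm:realisation}: define a tuple $(b_W)_{W\in\frakS}$ by setting $b_U=\pi_U(a)$, $b_V=\pi_V(b)$, and for every other $W$ letting $b_W=\pi_W(\gate_A(B))$ (equivalently $\pi_W(\gate_B(A))$, which coarsely agrees with it, being coarsely the coarse intersection of $A$ and $B$ by Lemma~\ref{lem:gate=intersection}). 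Check this tuple is $R$-consistent for a uniform $R$: for pairs not involving $U$ or $V$ this is immediate since $\gate_A(B)\subseteq X$ is a genuine point set; consistency of pairs $\{U,W\}$ and $\{V,W\}$ uses that $U\orth V$ and that $\pi_U(a)\in\pi_U(A)$ while $\pi_W$-coordinates track a point of $A$ (use the consistency of the realisation point for $a\in A$, together with the orthogonality container axiom to handle $W$ nested in/transverse to $U$ versus $V$). Since $U\orth V$ they are not $\nest$-comparable and not transverse, so there is no consistency constraint directly between the $U$- and $V$-coordinates. Let $x=x_T$ be the resulting realisation point. \textbf{(3)} Show $\dist_W(x_T, A\cup B)\le R'$ for all $W$: in coordinate $U$, $\pi_U(x_T)$ is close to $\pi_U(a)\in\pi_U(A)$; in coordinate $V$, close to $\pi_V(b)\in\pi_V(B)$; in every other coordinate, close to $\pi_W$ of a point of $A\cap B\subseteq A\cup B$. \textbf{(4)} Show $\dist_X(x_T, A\cup B)\gtrsim T$: if $\dist_X(x_T, A)$ were small, then $\pi_V(x_T)$ would be close to $\pi_V(A)$, hence (since $\pi_V(x_T)\approx\pi_V(b)$) $\pi_V(b)$ would be close to $\pi_V(A)$ and therefore close to its closest-point projection $\pi_V(\gate_B(A))$, contradicting the choice of $b$ once $T$ exceeds the accumulated uniform constants; symmetrically $\dist_X(x_T, B)$ cannot be small because of the $U$-coordinate. \textbf{(5)} Conclude: the family $\{x_T\}$ witnesses unbounded failure of realisation, so $A\cup B$ is not hierarchically quasiconvex, proving the contrapositive; tracking constants backwards gives that the threshold $T$ is controlled by $(X,\frakS)$, $\kappa$, and $\widetilde\kappa$.

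\medskip

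The main obstacle I anticipate is Step (2): verifying that the hand-built tuple is uniformly consistent. The subtle cases are the $\rho$-relations between $U$ (or $V$) and a domain $W$ that is nested in, or transverse to, the \emph{other} of the two orthogonal domains — here I will need the orthogonality/container axiom (Definition~\ref{defn:Hierarchical_space}\ref{item:dfs_orthogonal}) and Lemma~\ref{lem:close_proj_of_orthogonals} to see that $\rho^U_W$ and $\rho^V_W$ interact compatibly with the coordinates coming from $a\in A$ and $b\in B$, and I will lean on the fact that $\pi_U(a)\in\pi_U(X)$ is realised by an honest point of $X$ (namely $a$) whose own coordinate tuple is $E$-consistent. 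Everything else is bookkeeping with uniform constants, for which Remark~\ref{rem:nested_HQC} and the coarse Lipschitz property of the $\pi_W$ suffice.
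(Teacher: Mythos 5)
Your overall strategy (produce, for each $T$, a realisation point that is coordinate-wise uniformly close to $A\cup B$ but at distance roughly $T$ from it, contradicting the realisation property of $A\cup B$) is the same as the paper's, and your Steps (3)--(5) match the paper's endgame. However, Step (2) has a genuine gap: the tuple you build is not uniformly consistent, so Theorem~\ref{thm:realisation} does not apply to it. Setting $b_W=\pi_W(\gate_A(B))$ for every $W\notin\{U,V\}$ causes two problems. A minor one: $\gate_A(B)$ is a subset whose projections coarsely track the coarse intersection of $A$ and $B$ (Lemma~\ref{lem:gate=intersection}), so some coordinates $\pi_W(\gate_A(B))$ may have unbounded diameter, and Definition~\ref{defn:consistent_tuple} requires bounded coordinates; you would at least have to replace the gate by a single point in it. The serious one: the consistency of the honest point $a$ relates $\pi_U(a)$ to $\pi_W(a)$, \emph{not} to $\pi_W(\gate_A(B))$. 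Concretely, take $W\propnest U$ with $\dist_U(a,\rho^W_U)$ large; consistency of $a$ then forces $\rho^U_W(\pi_U(a))$ to coarsely equal $\pi_W(a)$, so consistency of your tuple for the pair $(W,U)$ would force $\pi_W(\gate_A(B))$ to be uniformly close to $\pi_W(a)$. Nothing prevents $a$ from being far from the gate in such a $W$ as well (a path in $A$ from the gate to $a$ can accumulate distance both in $\C U$ and in a domain $W\nest U$ whose $\rho^W_U$ sits near the gate's $U$-coordinate), and an analogous failure occurs for suitable $W\transverse U$. So the tuple can be inconsistent by an arbitrarily large amount, and neither the container axiom nor Lemma~\ref{lem:close_proj_of_orthogonals} repairs this.

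This is precisely why the paper's tuple (Equation~\eqref{equation:xW}) keeps $\pi_W(a)$ for all $W\nest U$, $\pi_W(b)$ for all $W\nest V$, $\pi_W(a)$ for $W$ orthogonal to both, and uses $\rho^U_W\cup\rho^V_W$ only in the remaining cases. The price of that correct choice is the step your outline omits entirely: one must then show that the realisation point is coordinate-wise close to $A\cup B$ in the remaining domains (those $W$ with $U\propnest W$ or $U\transverse W$, and likewise for $V$), where the coordinate is $\rho^U_W\cup\rho^V_W$ rather than the projection of an actual point of $A\cup B$. The paper does this via \cite[Proposition 4.24]{RST}: since $\pi_U(\gate_A(B))$ fails to be $T$-dense, $\diam_U(A)\ge T$ is large, so a hierarchy path from $a$ to $\gate_A(B)$ passes uniformly close to the product region $P_U$; hierarchical quasiconvexity of $A$ then puts $A$ uniformly close to $P_U$, hence $\pi_W(A)$ uniformly close to $\rho^U_W$. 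This is where the largeness of $T$ is used before the final contradiction, and it is the content your Step (2)/(3) would need to supply after correcting the tuple.
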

    
\begin{proof} Fix a hierarchical constant $E$ for $(X,\frakS)$. Assume by contradiction that $A$ and $B$ do not fill all squares. This means that, for every $T\in[0,+\infty)$, one can find two orthogonal domains $U,V\in \frakS$ such that $\min\{\diam_U(A), \diam_V(B)\}\ge T$, but neither $\pi_U(\gate_A(B))$ is $T$-dense in $\pi_U(A)$ nor $\pi_V(\gate_B(A))$ is $T$-dense in $\pi_V(B)$. 

From now on, we will say that a quantity is \emph{uniform} if it does not depend on $T$, but only on $(X,\frakS)$, $\kappa$, and $\widetilde\kappa$. We shall eventually choose $T$ greater than every uniform constant we will find along the way, and this will yield the desired contradiction.

Since $\pi_U(\gate_A(B))$ is not $T$-dense in $\pi_U(A)$, there exists a point $a\in A$ such that $\dist_U(a,\gate_A(B))> T$. Similarly, choose $b\in B$ such that $\dist_V(b,\gate_B(A))> T$. Now consider the following tuple of coordinates:
    \begin{equation}\label{equation:xW}
    x_W=\begin{cases}
        \pi_W(a)\mbox{ if } W\nest U;\\
        \pi_W(b)\mbox{ if } W\nest V;\\
        \pi_W(a)\mbox{ if } W\orth U \mbox{ and } W\orth V;\\
        \rho^U_W\cup \rho^V_W\mbox{ otherwise,}
    \end{cases}        
    \end{equation}
    where, in order to make the definition more compact, we slightly abused the notation by setting $\rho^U_W=\emptyset$ if $U\orth W$, and similarly for $V$. Notice that the tuple $\{x_W\}_{W\nest U}$ is $E$-consistent, as $a$ is a point of $X$ and therefore satisfies the consistency inequalities by Axiom~\eqref{item:dfs_transversal}, and for the same reason $\{x_W\}_{W\nest V}$ and $\{x_W\}_{W\orth U, W\orth V}$ are $E$-consistent as well. Arguing as in \cite[Construction 5.10]{HHS_II}, one can show that the whole tuple $\{x_W\}_{W\in\frakS}$ is $K_1$-consistent for some uniform constant $K_1$. Then by the Realisation Theorem~\ref{thm:realisation} we can find $p\in X$ and a uniform constant $K_2$ such that $\dist_W(p,x_W)\le K_2$ for every $W\in\frakS$.

    \begin{claim}\label{claim:p_close_to_AuB}
        There exist uniform constants $D\ge 0$ and $K_3\ge K_2$ such that, if $T\ge D$, then $\dist_W(p,A\cup B)\le K_3$ for every $W\in\frakS$.
    \end{claim}
    \begin{proof}
        We refer to how we defined $x_W$ in Equation~\eqref{equation:xW}. If either $W$ is nested into $U$ or $V$, or $W$ is orthogonal to both, then $\pi_W(p)$ is $K_2$-close to either $\pi_W(a)$ or $\pi_W(b)$, and we have nothing to prove. Thus suppose, without loss of generality, that either $U\propnest W$ or $U\transverse W$. Then $\pi_W(p)$ is $K_2$-close to $\rho^U_W\cup \rho^V_W$, and in particular 
        $$\dist_W(p,U)\le K_2+\diam_W(\rho^V_W)+\dist_W(U,V)\le K_2+3E,$$
        where we used that $\dist_W(U,V)\le 2E$ by Lemma~\ref{lem:close_proj_of_orthogonals}. Now let $P_U$ be the \emph{product region} associated to $U$, as defined in \cite[Definition 5.15]{HHS_II}. For our purposes, $P_U$ can be thought of as the subspace of all $z\in X$ such that, for every $Y\in\frakS$ such that $U\propnest Y$ or $U\transverse Y$, the projection $\pi_Y(z)$ coincides with $\rho^U_Y$, up to some error which is bounded in terms of $E$. By \cite[Proposition 4.24]{RST}, there exists uniform constants $D,\lambda,\nu\ge 1$ such that, if $T\ge D$, then there exists a $\lambda$-hierarchy path $\gamma$ connecting $a$ to $\gate_A(B)\subseteq A$ which passes $\nu$-close to the product region $P_U$. Since $A$ is $\kappa$-hierarchically quasiconvex, by Lemma~\ref{lem:equivalent_def_HQC} we have that $\gamma$ is contained in some uniform neighbourhood of $A$. Thus the distance between $A$ and $P_U$ is uniformly bounded. In turn, since the projection map $\pi_W\colon X\to \C W$ is $E$-coarsely Lipschitz, $\pi_W(A)$ is uniformly close to $\pi_W(P_U)$, which in turn uniformly coarsely coincides with $\rho^U_W\subseteq \pi_W(p)$. 
    \end{proof}
    Since $A\cup B$ is $\widetilde\kappa$-quasiconvex, the realisation property for $A\cup B$ tells us that $p$ is $\widetilde\kappa(K_3)$-close to $A\cup B$, and without loss of generality we can assume that $\dist_X(p,A)\le\widetilde\kappa(K_3)$. Then $\gate_B(p)$ is uniformly close to $\gate_B(A)$, as gate maps are coarsely Lipschitz. However, by how gate maps are constructed (see Definition~\ref{defn:gate}), we have that $\pi_V(\gate_B(p))$ uniformly coarsely coincides with the projection of $\pi_V(p)$ to the quasiconvex subset $\pi_V(B)$, and $\pi_V(p)=\pi_V(b)$ already lies in $\pi_V(B)$.
    Summarising, there is some constant $K_4$, only depending on $(X,\frakS)$, $\kappa$, and $\widetilde\kappa$, such that $\dist_V(b, \gate_B(A))\le K_4$, and this is against our choice of $b$ if we choose $T>K_4$.
\end{proof}

\begin{lem}\label{lem:AUB_2}
       Let $(X,\frakS)$ be a HHS, and let $A,B\subseteq X$ be $\kappa$-hierarchically quasiconvex subspaces. If $A$ and $B$ fill all squares (Definition~\ref{defn:fill_all_gaps}), for some constant $T\ge 0$, then $A\cup B$ is $\widetilde\kappa$-hierarchically quasiconvex, where $\widetilde\kappa$ only depends on $(X,\frakS)$, $\kappa$, $\dist_X(A,B)$, and $T$.
\end{lem}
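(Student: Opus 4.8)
\textbf{Proof strategy for Lemma~\ref{lem:AUB_2}.}

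The plan is to verify the two clauses of Definition~\ref{defn:HQC} for $A\cup B$. The quasiconvexity of each projection $\pi_U(A\cup B)$ in $\C U$ is the easy half: since $\pi_U(A)$ and $\pi_U(B)$ are each $\kappa(0)$-quasiconvex in the hyperbolic space $\C U$, and $\dist_U(A,B)\le E\dist_X(A,B)+E$ is uniformly bounded, Lemma~\ref{lem:union_of_quasiconvex_in_hyp} gives that $\pi_U(A\cup B)$ is $\widehat{R}$-quasiconvex for a uniform $\widehat R$. So the work is entirely in the realisation clause: given $p\in X$ with $\dist_U(p,A\cup B)\le R$ for every $U\in\frakS$, I must produce a uniform bound on $\dist_X(p,A\cup B)$ in terms of $R$, $\kappa$, $\dist_X(A,B)$, and $T$.

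The natural approach is to compare the gates $a_0:=\gate_A(p)$ and $b_0:=\gate_B(p)$, and show that $p$ is uniformly close to one of them. For each $U\in\frakS$ we know $\pi_U(p)$ is $R$-close to either $\pi_U(A)$ or $\pi_U(B)$; by Definition~\ref{defn:gate}, $\pi_U(a_0)$ (resp. $\pi_U(b_0)$) is the coarse closest-point projection of $\pi_U(p)$ to $\pi_U(A)$ (resp. $\pi_U(B)$), so in the first case $\dist_U(p,a_0)\lesssim R$ and in the second $\dist_U(p,b_0)\lesssim R$. If it were always the same one of the two, realisation for $A$ (or for $B$) would finish it. The content of \emph{filling all squares} is precisely to control the domains where the roles switch. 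The key case analysis: if $U$ is a domain where $\pi_U(p)$ is close to $\pi_U(A)$ but far (in terms of $R$) from $\pi_U(B)$, and $V$ is a domain where the reverse holds, then $U$ and $V$ cannot be orthogonal — because if they were, filling all squares would force, say, $\pi_U(\gate_A(B))$ to be $T$-dense in $\pi_U(A)$, and since $\pi_U(p)$ is $R$-close to $\pi_U(A)$, it is then $(R+T)$-close to $\pi_U(\gate_A(B))=\gate_{\C U}$ of something inside $\pi_U(B)$'s image under the gate, contradicting that $\pi_U(p)$ is far from $\pi_U(B)$ once we spell out that $\pi_U(\gate_A(B))$ is uniformly close to $\pi_U(B)$ (using coarse Lipschitzness of gates and the bounded coarse intersection). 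So: let $\mathcal U_A$ be the set of domains where $p$ projects close to $A$ but far from $B$, $\mathcal U_B$ the symmetric set, and $\mathcal U_0$ everything else (where $p$ projects close to \emph{both}, hence close to $\gate_B(A)$ and $\gate_A(B)$, which are within bounded Hausdorff distance of $A\cap B$-type sets and in particular of both $A$ and $B$). Then $\mathcal U_A$ and $\mathcal U_B$ are "transverse or nested" in every relative position — no orthogonality between them — which should let me show that the tuple $(\pi_U(a_0))_U$ and $(\pi_U(b_0))_U$ differ substantially only on a $\nest$-chain-like configuration, and conclude that $\gate_B(a_0)$ is uniformly close to $p$, or symmetrically. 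Concretely, I would form the candidate coordinate tuple $b_W := \pi_W(b_0)$ for $W\in\mathcal U_B\cup\mathcal U_0$ and $b_W:=\rho^{?}_W$ / $\pi_W(a_0)$ otherwise, check $K$-consistency using the no-orthogonality between $\mathcal U_A$ and $\mathcal U_B$ together with the consistency inequalities satisfied by the honest points $p,a_0,b_0$, realise it to a point $q$, and show $q$ is uniformly close to $p$ (every coordinate matches $p$ up to $O(R)$) and also uniformly close to $B$ (it agrees with $b_0\in B$ off a controlled set, invoking realisation/HQC of $B$); combining, $p$ is uniformly close to $B\subseteq A\cup B$.

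I expect the main obstacle to be the consistency check for the hybrid tuple — verifying the Behrstock-type inequality $\min\{\dist_V(b_V,\rho^W_V),\dist_W(b_W,\rho^V_W)\}\le K$ and the nesting inequality across the "seam" between the $A$-block and the $B$-block of domains. The clean input is that no $U\in\mathcal U_A$ is orthogonal to any $V\in\mathcal U_B$, so any such pair is transverse or $\nest$-comparable; for transverse pairs one leans on the fact that $p$ itself is a point and satisfies all consistency inequalities, pushing the small errors ($O(R)$, $O(T)$, $O(E)$) through the triangle inequalities; the genuinely delicate sub-case is a nested pair $V\propnest W$ with $V$ and $W$ in different blocks, where one must use the BGI variant (Lemma~\ref{lem:bgi}) to argue that the large-projection domain "remembers" the other via $\rho$, exactly as in \cite[Construction 5.10]{HHS_II}. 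Once consistency is in hand, the Realisation Theorem and the (already-established, for $A$ and $B$ separately) HQC realisation property do the rest, with all constants tracked as uniform in the stated parameters. The symmetry between $A$ and $B$ means I only need to handle one orientation of the dichotomy and then remark "without loss of generality".
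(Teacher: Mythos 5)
Your first half (quasiconvexity of each $\pi_U(A\cup B)$ via Lemma~\ref{lem:union_of_quasiconvex_in_hyp} and coarse Lipschitzness of the projections) matches the paper. The gap is in the realisation step. Splitting $\frakS$ into $\mathcal{U}_A$ (domains where $p$ projects close to $A$ but far from $B$), $\mathcal{U}_B$, and $\mathcal{U}_0$ is the right start, and your observation that filling all squares forbids a pair $U\in\mathcal{U}_A$, $V\in\mathcal{U}_B$ with $U\orth V$ is correct. But from there the argument must show that one of the two families $\mathcal{U}_A$, $\mathcal{U}_B$ is actually \emph{empty}, and your hybrid-tuple construction neither does this nor can replace it. First, the construction is circular: the tuple you propose takes $\pi_W(a_0)$ on $\mathcal{U}_A$ and $\pi_W(b_0)$ on $\mathcal{U}_B\cup\mathcal{U}_0$, and in every one of these domains the chosen coordinate is $O(R)$-close to $\pi_W(p)$ (that is exactly why you can hope to verify consistency); so, even granting consistency, the realisation point $q$ produced by Theorem~\ref{thm:realisation} is just a point uniformly close to $p$, and you have learned nothing new. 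Second, the final claim that $q$ is "uniformly close to $B$" is false whenever $\mathcal{U}_A\neq\emptyset$: on any $W\in\mathcal{U}_A$ one has $\pi_W(q)$ coarsely equal to $\pi_W(p)$, which is far from $\pi_W(B)$ by the very definition of $\mathcal{U}_A$, so the realisation property of $B$ cannot be invoked for $q$. In other words, your plan can only succeed if $\mathcal{U}_A=\emptyset$ (or symmetrically $\mathcal{U}_B=\emptyset$), and that is precisely the statement you never prove.

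The missing ingredient is that transverse and nested pairs between $\mathcal{U}_A$ and $\mathcal{U}_B$ can be excluded directly, with no tuple-building at all. If $U\in\mathcal{U}_A$, $V\in\mathcal{U}_B$ and $U\transverse V$, the Behrstock inequality applied to $p$ and to the points of $B$ (which project far from $\rho^V_U$ in $\C U$, since $p$ is far from $B$ there) forces $\diam_V(B)\le 3E$, and then $\dist_V(p,A)\le \dist_V(p,B)+\diam_V(B)+\dist_V(A,B)$ is small, contradicting $V\in\mathcal{U}_B$. If $U\propnest V$, the BGI variant (Lemma~\ref{lem:bgi}) plus $\kappa(0)$-quasiconvexity of $\pi_V(A)$ shows $\diam_U(A)\le E$, again a contradiction; and the orthogonal case is excluded by filling all squares as you noted. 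With all three relative positions ruled out, one of $\mathcal{U}_A$, $\mathcal{U}_B$ is empty, so $\dist_W(p,A)\le\Omega$ for every $W$ (or the same with $B$), and the realisation clause of Definition~\ref{defn:HQC} for the single subspace $A$ (or $B$) finishes the proof with constants depending only on $\kappa$, $T$, $\dist_X(A,B)$, and $(X,\frakS)$. So the correct argument is substantially shorter than what you outline, and the consistency check you flag as the main obstacle never needs to be done.
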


\begin{proof}
Firstly, by Lemma~\ref{lem:union_of_quasiconvex_in_hyp} the projection $\pi_U(A\cup B)$ is $\widetilde\kappa(0)$-quasiconvex, where $\widetilde\kappa(0)$ is a constant depending on $\kappa(0)$, $D\coloneq \dist_X(A,B)$, and a hierarchical constant $E$ for $(X,\frakS)$. 

Thus we are left to prove the realisation property for $A\cup B$. 
To this purpose, fix $R\ge 0$, let $x\in X$ be such that $\dist_W(x,A\cup B)\le R$ for every $W\in\frakS$, and set $$\Omega=\kappa(0)+R+T+E(D+KD+K+4),$$ 
where $K$ is the constant from \cite[Lemma 1.27]{quasiflat}, depending only on $\kappa$ and $(X, \frakS)$, such that for every $b\in B$
$$\dist_X(b, \gate_A(b))\le KD+K.$$

We claim that there exists $F\in\{A,B\}$ such that $\dist_W(x,F)\le \Omega$ for every $W\in\frakS$. This will imply that $\dist_X(x,A\cup B)\le \kappa(\Omega)$, so we can set $\widetilde\kappa(R)=\kappa(\Omega)$. If this is not the case, let $U,V\in\frakS$ be (necessarily distinct) domains such that $\dist_U(x,B)> \Omega$ and $\dist_V(x,A)> \Omega$. Notice that, as $\Omega\ge R$, we must have that $\dist_U(x,A)\le R$ and $\dist_V(x,B)\le R$. There are a few configurations to analyse, depending on the relation between $U$ and $V$.
    \begin{itemize}
        \item \textbf{$U\transverse V$}: By the Behrstock inequality, Definition~\ref{defn:Hierarchical_space}.\eqref{item:dfs_transversal}, we can assume without loss of generality that $\dist_U(x,\rho^V_U)\le E$. Moreover, since $\Omega\ge 4E$ and $\diam(\rho^V_U)\le E$, we have that $\dist_U(B,\rho^V_U)> 2E$, and again the Behrstock inequality tells us that $\pi_V(B)$ has diameter at most $3E$. Thus 
        $$\dist_V(x,A)\le \dist_V(x,B)+\diam_V(B)+\dist_V(A,B)\le R+3E+E(D+1),$$
        where we used that $\dist_V(A,B)\le E(D+1)$ as the projection $\pi_V\colon X\to \C V$ is $(E,E)$-coarsely Lipschitz. But then $\dist_V(x,A)\le \Omega$, against our assumption.

        \item \textbf{$U\propnest V$}:  Since $\dist_U(x,B)>\Omega\ge E$, the bounded geodesic image Lemma~\ref{lem:bgi} yields that every geodesic connecting $\pi_V(x)$ to $\pi_V(B)$ must pass $E$-close to $\rho^U_V$. Since $\dist_V(x,B)\le R$ we get that $\dist_V(x,U)\le R+E$, and in turn 
        $$\dist_V(U,A)\ge \dist_V(x,A)-\dist_V(x,U)-\diam_V(\rho^U_V)\ge 2E+\kappa(0).$$ 
        Now, every geodesic $\gamma\subseteq \C V$ connecting two points of $\pi_V(A)$ is contained in the $\kappa(0)$-neighbourhood of $\pi_V(A)$, and therefore is at least $2E$-far from $\rho^U_V$. Then again the bounded geodesic image Lemma~\ref{lem:bgi} tells us that $\pi_U(A)$ has diameter at most $E$, and as in the previous case we get that $\dist_U(x,B)\le \Omega$, yielding a contradiction. Notice that the same argument covers the case when $V\propnest U$.
        
        \item \textbf{$U\orth V$}: Since $A$ and $B$ fill all squares, we can assume without loss of generality that $\pi_U(\gate_A(B))$ is $T$-dense in $\pi_U(A)$, so there exists $b\in B$ such that $\dist_U(x,\gate_A(b))\le R+T$. In turn,
        $$\dist_U(x,B)\le \dist_U(x,b)\le \dist_U(x,\gate_A(b))+\dist_U(\gate_A(b), b)\le R+T+E(KD+K+1)\le\Omega,$$
        where again we used that $\pi_U$ is $(E,E)$-coarsely Lipschitz. This again yields a contradiction.
    \end{itemize}
\end{proof}

\subsection{Hierarchical quasiconvexity of an amalgam}\label{subsec:amalgam_hqc} 
We devote the rest of the Section to the proof of Theorem~\ref{thmintro:amalgamation_HQC}, regarding the amalgamation of HQC subgroups. First, a definition.

\begin{defn}[Hierarchy path hull]\label{def:hierarchyhull}
Let $Z$ be a subset of the hierarchically hyperbolic
space $(X,\frakS)$. Define $P^1_\lambda(Z)$ to be the union of all $\lambda$–hierarchy paths between points in $Z$. One can then inductively set $P^n_\lambda(Z)=P^1_\lambda(P^{n-1}_\lambda(Z))$ for all $n\ge 2$. 

Recall that every two points of $X$ are connected by a $\lambda_0$-hierarchy path, where, $\lambda_0$ is the constant from Remark~\ref{rem:existence_of_hpath}; thus, for every $\lambda\ge\lambda_0$ and every $n\ge 1$, the hull $P^n_\lambda(Z)$ is non-empty and contains $Z$. 
\end{defn}

The following restates the core result of \cite{RST}, which was crucial in establishing the equivalence between hierarchical quasiconvexity and being closed under hierarchy paths:
\begin{lem}[{\cite{RST}}]\label{lem:hierarchyhull_HQC}
    There exist $N\in\mathbb{N}$, $\Lambda\ge \lambda_0$, and $\ov\kappa$, all depending only on $(X,\frakS)$,  such that, for every $Z\subseteq X$, the $N$-th hierarchy path hull $P^N_{\Lambda}(Z)$ is $\ov\kappa$-hierarchically quasiconvex.
\end{lem}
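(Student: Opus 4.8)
The plan is to reproduce, in the HHS generality, the two halves of the Russell--Spriano--Tran argument: first exhibit an auxiliary hierarchically quasiconvex \emph{hull} of $Z$ defined coordinate by coordinate, and then show that this hull lies at finite Hausdorff distance from $P^N_{\ov\lambda}(Z)$ for a \emph{uniform} $N$. The conclusion then follows from Remark~\ref{rem:nested_HQC}.

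First I would construct the hull. For each $U\in\frakS$ let $\mathfrak q_U\subseteq\C U$ be the union of all geodesics of $\C U$ with endpoints in $\pi_U(Z)$; since $\C U$ is $E$-hyperbolic, $\mathfrak q_U$ is uniformly quasiconvex and contains $\pi_U(Z)$. Using the consistency inequalities (Definition~\ref{defn:Hierarchical_space}.\eqref{item:dfs_transversal}) and the bounded geodesic image Lemma~\ref{lem:bgi}, I would check that the tuple $(\mathfrak q_U)_{U\in\frakS}$ is $\kappa_0$-consistent for a constant $\kappa_0=\kappa_0(E)$: if $U\transverse V$ then the $Z$-points realising the endpoints of geodesics defining $\mathfrak q_U$ already satisfy the Behrstock inequality, so either $\mathfrak q_U$ is close to $\rho^V_U$ or $\mathfrak q_V$ is close to $\rho^U_V$; the nested case is handled similarly via Lemma~\ref{lem:bgi}. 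Let $H(Z)=\{x\in X:\dist_U(x,\mathfrak q_U)\le\theta(\kappa_0)\ \forall U\}$, where $\theta$ is the function from the Realisation Theorem~\ref{thm:realisation}; then $H(Z)\ne\emptyset$, its coordinate projections coarsely coincide with the uniformly quasiconvex sets $\mathfrak q_U$, and the realisation property for $H(Z)$ is immediate from its definition together with the uniqueness axiom. Hence $H(Z)$ is $\ov\kappa$-hierarchically quasiconvex.

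Next I would sandwich $P^N_{\ov\lambda}(Z)$ between $H(Z)$ and a uniform neighbourhood of it. The inclusion $P^n_{\ov\lambda}(Z)\subseteq N_r(H(Z))$, valid for \emph{every} $n$ with a fixed $r=r(\ov\lambda)$, is the easier direction and is proved by induction on $n$: a $\ov\lambda$-hierarchy path between two points of $N_r(H(Z))$ projects in each $\C U$ to an unparameterised quasigeodesic with endpoints uniformly close to $\mathfrak q_U$, hence by stability of quasigeodesics in the $E$-hyperbolic space $\C U$ and quasiconvexity of $\mathfrak q_U$ it stays uniformly close to $\mathfrak q_U$; so every point of $P^1_{\ov\lambda}(N_r(H(Z)))$ realises the tuple $(\mathfrak q_U)$ up to uniform error, and since $H(Z)$ is HQC it is uniformly close to $H(Z)$. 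The reverse inclusion $H(Z)\subseteq N_{r'}(P^N_{\ov\lambda}(Z))$ for some uniform $N$ is the crux, and this is where the value of $N$ must be extracted from an induction on the complexity of $(X,\frakS)$: given $p\in H(Z)$, one builds it out of iterated hierarchy paths by choosing $z,z'\in Z$ whose projections to $\C S$ straddle $\pi_S(p)$, taking a hierarchy path between them passing near $\pi_S(p)$, and then correcting the remaining coordinates by running further hierarchy paths \emph{inside the standard product regions}, which are coarsely HHSs of strictly smaller complexity by \cite{HHS_II}; the large link axiom (Definition~\ref{defn:Hierarchical_space}.\eqref{item:dfs_large_link_lemma}) controls which domains still disagree with $H(Z)$, so the recursion terminates after at most $n$ rounds and gives $N=N(n)$.

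The main obstacle is precisely this last induction: making rigorous the statement that the discrepancy between an intermediate hierarchy-path hull and $H(Z)$ is supported on domains of strictly smaller complexity, so that one further round of hierarchy paths inside product regions removes it. This forces a careful bookkeeping of constants through the gate and product-region machinery of \cite{HHS_II}, and repeated appeals to the Realisation Theorem~\ref{thm:realisation} and the uniqueness axiom to pass from coordinate-wise control to control in $X$. Once this is done, $\dist_{Haus}\bigl(P^N_{\ov\lambda}(Z),H(Z)\bigr)$ is bounded uniformly in $Z$, and Remark~\ref{rem:nested_HQC} transfers the $\ov\kappa$-hierarchical quasiconvexity of $H(Z)$ to $P^N_{\ov\lambda}(Z)$, with $N$, $\ov\lambda$, and $\ov\kappa$ depending only on $(X,\frakS)$.
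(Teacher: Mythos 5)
Your overall architecture matches the intended one: a coordinate-wise quasiconvex hull $H(Z)$, a proof that it is hierarchically quasiconvex, a uniform Hausdorff bound $\dist_{Haus}(P^N_{\ov\lambda}(Z),H(Z))\le D$, and then Remark~\ref{rem:nested_HQC}. But the paper proves this lemma purely by citation: the hierarchical quasiconvexity of the hull is \cite[Lemma 6.2]{HHS_II}, and the Hausdorff closeness to the $N$-fold hierarchy path hull, with $N$ and $\ov\lambda$ uniform, is exactly \cite[Theorem 5.2]{RST}. You instead set out to reprove both ingredients, and that is where the proposal has genuine gaps. The inclusion $H(Z)\subseteq N_{r'}(P^N_{\ov\lambda}(Z))$ — which you correctly identify as the crux — is precisely the content of the RST theorem, and your sketch of it (induction on complexity, correcting coordinates by hierarchy paths inside product regions, with the large link axiom controlling the remaining discrepancies) is not carried out: you explicitly flag the key inductive step, namely that the discrepancy between an intermediate hull and $H(Z)$ is supported on domains of strictly smaller complexity, as an unresolved obstacle. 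Without that step there is no uniform $N$, and the lemma is exactly the statement that such a uniform $N$ exists.

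There is also a smaller but real issue in your first half. The Realisation Theorem~\ref{thm:realisation} applies to $R$-consistent tuples of \emph{uniformly bounded diameter}, so it cannot be applied verbatim to the tuple $(\mathfrak q_U)_U$, whose entries are typically unbounded; and the claim that the realisation property for $H(Z)$ is ``immediate from its definition together with the uniqueness axiom'' is not right. Realisation for $H(Z)$ requires, given $x$ with $\dist_U(x,\mathfrak q_U)$ small for all $U$, producing an actual point of $H(Z)$ near $x$; this is done by projecting $\pi_U(x)$ to $\mathfrak q_U$ in each coordinate, verifying that the resulting bounded tuple is consistent (via Behrstock, bounded geodesic image, and partial realisation), and only then invoking Theorem~\ref{thm:realisation}. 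That verification is the substance of \cite[Lemma 6.2]{HHS_II}, not a formal consequence of the definition. In short: either cite \cite[Lemma 6.2]{HHS_II} and \cite[Theorem 5.2]{RST} as the paper does, or supply full proofs of both; as written, the proposal is a plan with the hardest parts still open.
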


\begin{proof}
    For every $\theta\ge 0$, let $H_{\theta}(Z)$ be the $\theta$-quasiconvex hull of $Z$, as defined in e.g. \cite[Definition 4.12]{RST}. By \cite[Lemma 6.2]{HHS_II}, there exists $\theta_0$ such that, for every $Z\subset X$ and every $\theta\ge \theta_0$, the hull $H_{\theta}(Z)$ is $\kappa_\theta$-hierarchically quasiconvex, where $\kappa_\theta$ only depends on $\theta$ and the HHS structure. Moreover, \cite[Theorem 5.2]{RST} states that there exist $N$ and $\Lambda$ as above, and a constant $\ov\theta\ge \theta_0$, such that $\dist_{Haus}(P^N_{\Lambda}(Z), H_{\ov\theta}(Z))\le D$, where $D$ only depends on $\Lambda$, $\ov\theta$, and $(X, \frakS)$. Thus, as $P^N_{\Lambda}(Z)$ is within Hausdorff distance at most $D$ from a $\kappa_{\ov\theta}$-hierarchically quasiconvex set, it is itself $\ov\kappa$-hierarchically quasiconvex by Remark~\ref{rem:nested_HQC}, for some function $\ov\kappa$ depending on $\kappa_{\ov\theta}$ and $D$ (and therefore, ultimately, only on $(X,\frakS)$).
\end{proof}

We now introduce the main technical requirement of Theorem~\ref{thmintro:amalgamation_HQC}. Its necessity will be discussed in Subsection~\ref{subsec:no_drift}, where we also provide an explicit counterexample.
\begin{defn}\label{defn:no_drift} Let $(G,\frakS)$ be a HHG, let $A,B\le G$ be two subgroups satisfying the hypothesis of Theorem~\ref{thm:amalgamation}, and let $C=A\cap B$. We say that $A,B$ have \emph{no drift in the orthogonals} if there exists $R\ge0$ such that the following hold. For every $a,b\in (A\cup B)-C$ belonging to different factors, and for every domain $U\in\frakS$ which is orthogonal to both $a^{-1}Y_a$ and $Y_b$, we have that either $\pi_U(A)$ or $\pi_U(B)$ is $R$-dense in $\C U$.
\end{defn}

\begin{thm}\label{thm:amalgamation_of_quasiconvex}
    Let $(G,\frakS)$ be a HHG, let $A,B\le G$ be two $\kappa$-hierarchically quasiconvex subgroups, and let $C=A\cap B$. Suppose that:
    \begin{itemize}
        \item There exists $M\ge100E$ such that $A$ and $B$ satisfy the hypotheses of Theorem~\ref{thm:amalgamation};
        \item $A$ and $B$ fill all squares (Definition~\ref{defn:fill_all_gaps}), for some constant $T\ge 0$;
        \item $A$ and $B$ have no drift in the orthogonals (Definition~\ref{defn:no_drift}), for some constant $R\ge 0$.
    \end{itemize}
    There exist a positive constant $\mathfrak M$ and a function $\mathfrak k\colon[0,+\infty)\to [0,+\infty)$, both depending only on $\kappa$, $T$, $R$, and $(G,\frakS)$, such that, if $M\ge \mathfrak M$, then $\langle A,B\rangle_G\cong A*_C B$ is $\mathfrak k$-hierarchically quasiconvex in $G$.
\end{thm}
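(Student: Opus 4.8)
The plan is to use the characterisation of hierarchical quasiconvexity via hierarchy paths (Lemma~\ref{lem:equivalent_def_HQC}): it suffices to show that any $\lambda$-hierarchy path $\gamma$ with endpoints $x,y$ on $\Xi\coloneq\langle A,B\rangle_G x_0$ stays within a uniform neighbourhood of $\Xi$. Write $x=w x_0$, $y=w' x_0$ with $w,w'\in A*_C B$; after translating by $w^{-1}$ we may assume $x\in Cx_0$ and $y=g_1\cdots g_k c\,x_0$ in normal form. The key structural input is the combinatorics developed in Section~\ref{sec:amalgamation}: the domains $W_1,\dots,W_k$ (Notation~\ref{notation:C_iW_i}) are pairwise transverse (Lemma~\ref{lemma:alex_lemma}), the projection of $y$ to $W_i$ is coarsely $C_{i+1}$, and, crucially, Lemma~\ref{lem:Cj_close_in_W_i} controls how the cosets $C_j$ project to each $W_i$. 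The idea is that $\gamma$ must cross the product-region-like pieces associated with the $W_i$ roughly in order, so it decomposes (coarsely) into consecutive subpaths each of which is a hierarchy path between points of a single coset $g_1\cdots g_{j}A x_0$ or $g_1\cdots g_{j}B x_0$, and then HQC of those cosets (which is just a translate of HQC of $A\cup B$, available via Theorem~\ref{lem:AUB} since $A,B$ fill all squares) finishes the job.

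**Main steps.** First, I would record that $Ax_0\cup Bx_0$ is $\widetilde\kappa$-HQC by Theorem~\ref{lem:AUB}, hence so is every $G$-translate $gAx_0\cup gBx_0$, with uniform gauge; note the cosets $g_1\cdots g_{i-1}A_{\bullet}x_0$ appearing along $w'$ are of this form. Second, I would show that $\gamma$ has a well-defined coarse ``itinerary'': for each $i$, since $\dist_{W_i}(x,y)\ge \dist_{W_i}(C_1,C_{k+1})\ge \tfrac{9}{10}M-5E$ is large (Claim~\ref{claim:large projection}), the unparameterised quasigeodesic $\pi_{W_i}(\gamma)$ must travel a definite distance in $\C W_i$; combined with the Behrstock inequality among the pairwise-transverse $W_i$'s and bounded geodesic image, the times at which $\gamma$ ``moves in $W_i$'' are coarsely monotone in $i$. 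This lets me pick points $z_0=x, z_1,\dots,z_k=y$ on $\gamma$, with $z_i$ coarsely on the coset $D_i\coloneq g_1\cdots g_i A_\bullet x_0$ (the factor containing $g_i$), such that the subpath $\gamma_i$ of $\gamma$ from $z_{i-1}$ to $z_i$ has, in every domain $W$, projection that ``does not see'' any $W_j$ with $j\notin\{i-1,i,i+1\}$ — this is where Lemma~\ref{lem:Cj_close_in_W_i} and Lemma~\ref{lemma:alex_lemma} are used, to argue $z_{i-1},z_i$ both project uniformly close to $D_i$ in all domains, hence (by the realisation property of the HQC set $D_i\cup D_{i-1}$, say) lie uniformly close to that coset. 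Third, each $\gamma_i$ is a $\lambda$-hierarchy path between two points uniformly close to the $\widetilde\kappa$-HQC set $D_{i-1}\cup D_i$ (a single translate of $Ax_0\cup Bx_0$ composed appropriately), so by Lemma~\ref{lem:equivalent_def_HQC} it stays in a uniform neighbourhood of $D_{i-1}\cup D_i\subseteq\Xi$. Concatenating, $\gamma\subseteq N_\Lambda(\Xi)$ with $\Lambda$ uniform, giving the $\mathfrak k$-HQC conclusion with $\mathfrak M$ absorbing all the thresholds (including $100E$ and the constants from Lemmas~\ref{lemma:alex_lemma}, \ref{lem:Cj_close_in_W_i}, \ref{lem:AUB}).

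**Where the no-drift hypothesis enters.** The delicate point is the case of a domain $U$ orthogonal to all (or to a consecutive block of) the $W_i$'s: such a $U$ is invisible to the transversality combinatorics, so $\pi_U(\gamma)$ is not controlled by the itinerary argument, and $\pi_U(\Xi)$ could in principle be much larger than $\pi_U(D_i)$ for any single $i$ — which would break the realisation estimate for $\gamma$ relative to $\Xi$. The no-drift condition (Definition~\ref{defn:no_drift}) is exactly what rules this out: it forces $\pi_U(A)$ or $\pi_U(B)$ to be $R$-dense in the whole of $\C U$ whenever $U\orth a^{-1}Y_a$ and $U\orth Y_b$, so $\pi_U(\Xi)$ coarsely fills $\C U$ and the realisation inequality is automatic in the $U$-coordinate. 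Concretely, in the third step, when verifying that $z_{i-1},z_i$ are close to $D_i$ ``in every domain $W$'', the domains $W$ orthogonal to $W_{i-1}$ and $W_i$ (equivalently, after translating back, orthogonal to $a^{-1}Y_a$ and $Y_b$ for the relevant $a\in A$, $b\in B$) are handled not by the metric estimates but by this density statement.

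**Main obstacle.** I expect the hardest part to be making the ``coarse itinerary'' rigorous: proving that a single hierarchy path crossing $k$ pairwise-transverse large-projection domains in a row can be cut into $k$ pieces each localised near one coset, with all constants uniform in $k$. The pairwise transversality and the large gaps $\dist_{W_i}(W_{i-1},W_{i+1})>6E$ (Claim~\ref{claim:distance_of_consecutives}) give the needed ``domino'' behaviour via iterated Behrstock and bounded geodesic image, much as in the proof of Lemma~\ref{lemma:alex_lemma}, but one must also control the ``leftover'' domains (those nested in some $W_i$, or transverse to the $W_i$'s but not among them, or orthogonal to them) — the first by nesting under a single $W_i$ hence localised to the corresponding $\gamma_i$, the second by Lemma~\ref{lemma:alex_lemma}'s conclusion $\dist_{W_j}(W_i,W_r)>2E$, and the third precisely by no-drift. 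Packaging all of this uniformly, rather than the individual estimates, is the real work.
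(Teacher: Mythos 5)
Your core decomposition is the same engine as the paper's: cut the hierarchy path at points lying uniformly close to the successive cosets $A_i$ (via the transversality combinatorics, Lemma~\ref{lem:Cj_close_in_W_i} and a realisation argument), apply hierarchical quasiconvexity of translates of $A\cup B$ (Theorem~\ref{lem:AUB}, available by fill-all-squares) to each subpath, and use no-drift together with fill-all-squares for domains orthogonal to the $W_i$; this is exactly Lemma~\ref{lem:nbh_chiuso_per_hp} and Claim~\ref{claim:li_Ai_close}. The genuine gap is in your reduction step. You invoke Lemma~\ref{lem:equivalent_def_HQC}, which requires controlling $\lambda$-hierarchy paths for \emph{every} $\lambda$, while the threshold $\mathfrak M$ is only allowed to depend on $\kappa$, $T$, $R$ and $(G,\frakS)$ --- not on $\lambda$. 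But your itinerary argument needs, in each $\C W_i$, a point of $\pi_{W_i}(\gamma)$ at definite distance (about $M/5$) from both endpoints; since an unparametrised $(\lambda,\lambda)$-quasigeodesic can jump by roughly $2\lambda$ between consecutive points, such a point is only guaranteed when $\dist_{W_i}(x,y)\ge 9M/10-5E$ exceeds roughly $2M/5+2\lambda$, i.e.\ when $M$ is large compared with $\lambda$. For $\lambda\gg M$ the subdivision points need not exist, and you cannot repair this by enlarging $\mathfrak M$, since it must be independent of $\lambda$. (This is visible in the paper: the threshold $M(K)$ of Notation~\ref{notation:choice_of_M} explicitly contains the term $10\lambda'$.)

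The paper circumvents this by never verifying the all-$\lambda$ criterion directly: it uses the hierarchy-path hull result (Lemma~\ref{lem:hierarchyhull_HQC}), which says that $P^N_{\ov\lambda}(Z)$ is HQC for a \emph{single} fixed $\ov\lambda$ and fixed $N$ depending only on $(G,\frakS)$, and then proves the fixed-$\lambda$ statement Lemma~\ref{lem:nbh_chiuso_per_hp} for hierarchy paths whose endpoints lie in a $K$-neighbourhood of $A*_CB$, iterating it $N$ times; $\mathfrak M$ is the maximum of the finitely many thresholds $M(K_{n-1})$, and HQC of $A*_CB$ follows because it is within bounded Hausdorff distance of the HQC set $P^N_{\ov\lambda}(A*_CB)$ (Remark~\ref{rem:nested_HQC}). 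Note that this reduction also forces one to handle endpoints merely \emph{near} the subgroup, which your proposal does not address. Once that reduction is in place, your remaining steps amount to the paper's Claim~\ref{claim:li_Ai_close}, whose five-case analysis (over the relation of an arbitrary domain $U$ to $W_i$) is the technical work your sketch leaves open but correctly anticipates, including where no-drift enters.
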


Theorem~\ref{thm:amalgamation_of_quasiconvex} is a consequence of the following technical statement:
\begin{lem}\label{lem:nbh_chiuso_per_hp}
    There exist functions $K'\colon[0,+\infty)\to [0,+\infty)$ and $M\colon[0,+\infty)\to [100E,+\infty)$, depending only on $\kappa$, $T$, $R$, and $(G,\frakS)$, such that the following holds. Let $\Lambda$ be as in Lemma~\ref{lem:hierarchyhull_HQC}. For every $K\ge 0$, if $M\ge M(K)$ then 
    $$P^1_{\Lambda}(N_K(A*_C B))\subseteq N_{K'(K)}(A*_C B).$$
\end{lem}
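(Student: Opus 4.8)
The plan is to cut an $\ov\lambda$--hierarchy path $\gamma$ with endpoints in $N_K(A*_C B)$ into subpaths, each of which has both of its endpoints uniformly close to a single coset of $A$, of $B$, or of $A\cup B$ inside $G$, and then to invoke the hierarchical quasiconvexity of those cosets (Definition~\ref{defn:HQC}, Lemma~\ref{lem:equivalent_def_HQC}). The point is that the number of pieces is irrelevant, since each piece will be shown to lie in a \emph{uniform} neighbourhood of $G':=A*_C B=\langle A,B\rangle_G$ (the identification being Theorem~\ref{thm:amalgamation}); summing over the pieces therefore costs nothing.

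\emph{Reduction and set-up.} The left $G$--action on $(G,\frakS)$ is by isometries and preserves $G'$, so after translating by an element of $G'$ we may assume the point of $G'$ nearest the initial endpoint $p$ of $\gamma$ is $1$, and write the point nearest the terminal endpoint as a normal-form word $q'=g_1g_2\cdots g_k c$; if $k\le1$ then both endpoints of $\gamma$ lie within $K$ of a single coset of a factor and we are done by the hierarchical quasiconvexity of that factor, so assume $k\ge 2$. As in Notation~\ref{notation:C_iW_i} set $C_i=g_1\cdots g_{i-1}C\subseteq G$ and $W_i=g_1\cdots g_{i-1}Y_{g_i}$. Along the Bass--Serre geodesic of $G'$ from the vertex containing $1$ to the one containing $q'$ there are vertex cosets $V_0,\dots,V_k$, each a $G$--translate of $A$ or of $B$ (hence $\kappa$--hierarchically quasiconvex), with $1\in V_0$, $q'\in V_k$, $C_i\subseteq V_{i-1}\cap V_i$, and $V_{i-1}\cup V_i$ a coset of $A\cup B$. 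By Claims~\ref{claim:transverse}--\ref{claim:distance_of_consecutives} and Lemma~\ref{lemma:alex_lemma} the $W_i$ are pairwise transverse and ``in a row''; by Lemma~\ref{lem:Cj_close_in_W_i}, in the hyperbolic space $\C W_i$ the projection of every $C_j$ is $(M/10+5E)$--close to $\pi_{W_i}(C_i)$ for $j\le i$ and to $\pi_{W_i}(C_{i+1})$ for $j\ge i+1$, while $\dist_{W_i}(C_i,C_{i+1})\ge M$; we take $M$ much larger than $\ov\lambda$, $E$ and $K$, and this is where $M=M(K)$ (growing roughly linearly in $K$) enters.

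\emph{Locating the joints.} Since $\pi_{W_i}\circ\gamma$ is an unparameterised $\ov\lambda$--quasigeodesic in the hyperbolic space $\C W_i$ running from within $O(M/10+E+K)$ of $\pi_{W_i}(C_i)$ to within the same distance of $\pi_{W_i}(C_{i+1})$, a distance $\ge M$, the progress $\dist_{W_i}(\gamma(t),C_i)$ is coarsely monotone in the parameter $t$, so we may speak of $\gamma$ having ``entered'' or ``exited'' $W_i$. The Behrstock inequality applied to $\gamma(t)\in G$ and the transverse pair $W_{i-1},W_i$ forces, at each $t$, that either $\gamma(t)$ is near the terminal end of $W_{i-1}$ or near the initial end of $W_i$; iterating through the row shows that the set of parameters at which $\gamma$ has exited $W_1,\dots,W_{i-1}$ and not yet entered $W_i,\dots,W_k$ is a non-empty subinterval, and we pick a point $\zeta_i$ of $\gamma$ in it (and set $\zeta_0=p$, $\zeta_{k+1}=q$). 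By Lemma~\ref{lem:Cj_close_in_W_i}, $\zeta_i$ then projects within $O(M/10+E)$ of $\pi_{W_j}(C_i)$ for \emph{every} $j$. The key step is to upgrade this to $\dist_W(\zeta_i,C_i)=O(1)$ for \emph{all} $W\in\frakS$, i.e.\ that $\zeta_i$ is realised uniformly close to $C_i\subseteq V_{i-1}\cap V_i$: domains nesting or transverse to one of the relevant $W_j$ are handled by the consistency and bounded geodesic image estimates (Lemma~\ref{lem:bgi}), exactly as in Section~\ref{sec:amalgamation}, whereas domains orthogonal to the $W_j$ straddling $\zeta_i$ are controlled using that $A,B$ have no drift in the orthogonals (Definition~\ref{defn:no_drift}) together with the fact that $A$ and $B$ fill all squares, i.e.\ that cosets of $A\cup B$ are hierarchically quasiconvex (Lemma~\ref{lem:AUB}), via the product-region arguments of Subsection~\ref{subsec:aub}.

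\emph{Conclusion.} Each $\gamma|_{[\zeta_{i-1},\zeta_i]}$ is again an $\ov\lambda$--hierarchy path, and by the previous step both its endpoints project uniformly close to $\pi_W(V_{i-1})$ in every $\C W$ (both $C_{i-1}$ and $C_i$ lie in $V_{i-1}$). Since $\pi_W(V_{i-1})$ is $\kappa(0)$--quasiconvex in the hyperbolic space $\C W$, the unparameterised quasigeodesic $\pi_W(\gamma|_{[\zeta_{i-1},\zeta_i]})$ stays uniformly close to $\pi_W(V_{i-1})$, so every point $z$ of $\gamma|_{[\zeta_{i-1},\zeta_i]}$ satisfies $\dist_W(z,V_{i-1})\le D$ for all $W$, with $D$ uniform; the realisation clause for the $\kappa$--hierarchically quasiconvex coset $V_{i-1}$ then gives $\dist_G(z,V_{i-1})\le\kappa(D)$, and $V_{i-1}\subseteq G'$. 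As $\kappa(D)$ depends only on $\kappa,T,R,(G,\frakS)$ and not on $i$ or $k$, and $\gamma=\bigcup_{i=1}^{k+1}\gamma|_{[\zeta_{i-1},\zeta_i]}$, we obtain $\gamma\subseteq N_{K'(K)}(G')$ with $K'(K):=\kappa(D)$. (The first and last pieces are handled identically using $1\in V_0$ and $q'\in V_k$.) The main obstacle is the middle step --- proving each joint $\zeta_i$ is uniformly close, in \emph{every} coordinate space, to $C_i$: the ruler domains $W_j$ are immediate from Lemma~\ref{lem:Cj_close_in_W_i}, the domains comparable with or transverse to the $W_j$ yield to the bookkeeping already in Section~\ref{sec:amalgamation}, but domains \emph{orthogonal} to the $W_j$ are genuinely unconstrained by the amalgamation data --- precisely the phenomenon behind the counterexample of Subsection~\ref{subsec:no_drift} --- so taming them is the whole purpose of the ``no drift in the orthogonals'' and ``fill all squares'' hypotheses, and keeping every constant dependent only on $\kappa,T,R,(G,\frakS)$ while forcing $M=M(K)$ is where the real care lies.
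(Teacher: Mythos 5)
Your overall architecture (cut the hierarchy path at transition points and absorb each piece using hierarchical quasiconvexity of cosets) is the same as the paper's, but the pivotal step is asserted rather than proved, and as formulated it cannot work. You choose the joints $\zeta_i$ to be points that have ``exited'' $W_1,\dots,W_{i-1}$ and ``not yet entered'' $W_i,\dots,W_k$, and you then claim the upgrade $\dist_W(\zeta_i,C_i)=O(1)$ for \emph{every} $W\in\frakS$, waving at consistency/BGI and at the no-drift and fill-all-squares hypotheses. This is exactly where the whole difficulty of the lemma sits, and your sketch gives no argument for it. Worse, with your choice of joints the claim is quantitatively unattainable even in the ruler domains themselves: hypothesis~\ref{hyp::3} and Lemma~\ref{lem:Cj_close_in_W_i} only guarantee that $\pi_{W_i}(1)$ and $\pi_{W_{i-1}}(w)$ lie within about $M/10$ of $\pi_{W_i}(C_i)$, resp.\ $\pi_{W_{i-1}}(C_i)$, and nothing forces the path to approach $\pi(C_i)$ (or even $\pi(A_{i-1})$, $\pi(A_i)$) more closely than that near the transition; so a point that has ``not yet entered $W_i$'' may sit at distance of order $M/10$ from the relevant coset in $\C W_{i-1}$ and $\C W_i$. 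Your endgame (per-domain closeness plus realisation for $V_{i-1}$) then only yields a neighbourhood of radius roughly $\kappa(M/10+\dots)$, i.e.\ a constant $K'$ depending on $M$. The lemma explicitly requires $K'$ to be a function of $K$ alone (given $\kappa,T,R,(G,\frakS)$), with $M$ allowed to be arbitrarily large above $M(K)$, and this $M$-independence is what makes the iteration in the proof of Theorem~\ref{thm:amalgamation_of_quasiconvex} possible; an $M$-dependent bound does not prove the statement. Separately, aiming at $C_i$ is also qualitatively too strong: for domains $U\propnest W_i$ the path need only be close to the large coset $A_i$, not to $C_i$, and the paper never proves (nor needs) proximity to $C_i$.

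The paper's proof is engineered precisely to avoid these problems: the cut points $\ell_i$ are chosen so that $\pi_{W_i}(\ell_i)$ is at distance at least $M/5$ from \emph{both} endpoints $\pi_{W_i}(1)$ and $\pi_{W_i}(w)$; by thinness of quadrilaterals in the hyperbolic space $\C W_i$ this pins $\pi_{W_i}(\ell_i)$ near the middle geodesic $[\pi_{W_i}(c),\pi_{W_i}(c')]$ with $c\in C_i$, $c'\in C_{i+1}$, hence $O(1)$-close (independently of $M$) to the $\kappa(0)$-quasiconvex set $\pi_{W_i}(A_i)$. One then bounds $\dist_U(\ell_i,A_i)$ for all other $U$ by a five-case analysis, in which the orthogonal case (Case 5) genuinely uses no drift, fill-all-squares and the gate-equals-intersection lemma; the conclusion is that $\ell_i$ lies within an $M$-independent distance of the coset $A_i$ in the group, via realisation for $A_i$. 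Finally the subpath from $\ell_i$ to $\ell_{i+1}$ has endpoints near $A_i\cup A_{i+1}=g_1\cdots g_i(A\cup B)$, and hierarchical quasiconvexity of $A\cup B$ (Theorem~\ref{lem:AUB}, from fill-all-squares) together with Lemma~\ref{lem:equivalent_def_HQC} finishes the proof. If you want to salvage your write-up, you should replace your transition points by cut points selected by the ``far from both endpoints in $\C W_i$'' criterion, replace the target $C_i$ by the cosets $A_i$ and of $A\cup B$, and actually carry out the orthogonal-domain analysis rather than citing it.
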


\begin{proof}[Proof of Theorem~\ref{thm:amalgamation_of_quasiconvex}, assuming Lemma~\ref{lem:nbh_chiuso_per_hp}] Set $K_0=0$, and iteratively define $M_n=M(K_{n-1})$ and $K_n=K'(K_{n-1})$ for every $n=1,\ldots, N$, where $N$ is the integer from Lemma~\ref{lem:hierarchyhull_HQC}. Now let $\mathfrak M=\max_{i=1,\ldots, N} M_n$. If $M\ge \mathfrak M$, then 
$$P^N_{\Lambda}(A*_C B)\subseteq P^{N-1}_{\Lambda}(N_{K_1}(A*_C B))\subseteq\ldots\subseteq N_{K_N}(A*_C B).$$
Now, $P^N_{\Lambda}(A*_C B)$ is $\ov \kappa$-HQC, where $\ov\kappa$ is the function from Lemma~\ref{lem:hierarchyhull_HQC} which only depends on $(G,\frakS)$. Furthermore, as $P^N_{\Lambda}(A*_C B)$ and $A*_C B$ are within Hausdorff distance at most $K_N$, Remark~\ref{rem:nested_HQC} implies that $A*_C B$ is itself $\mathfrak k$-hierarchically quasiconvex, for some function $\mathfrak k$ only depending on $K_N$ and $\ov\kappa$. As by hypothesis $K_N$ only depends on $\kappa$, $T$, $R$, and $(G,\frakS)$, this concludes the proof of Theorem~\ref{thm:amalgamation_of_quasiconvex}.
\end{proof}

\begin{proof}[Proof of Lemma~\ref{lem:nbh_chiuso_per_hp}]
Recall that, if $G$ is a HHG, we can assume that the underlying hierarchically hyperbolic space $X$ is a Cayley graph for $G$, with respect to any finite generating set. Let $\gamma$ be a $\Lambda$-hierarchy path between points of $N_K(A*_C B)$. If we connect each endpoint of $\gamma$ with any point of $A*_C B$ within distance $K$, we get a $\lambda'$-hierarchy path $\gamma'$ which extends $\gamma$, for some constant $\lambda'\ge \Lambda$ depending only on $K$, $\Lambda$, and the hierarchical constant for $G$. Up to the action of $A*_C B$ on itself, we can assume that the endpoints of $\gamma'$ are the identity element $1$ and some $w=g_1\ldots g_k c$, where $c\in C$, $g_i\not \in C$ for all $i=1,\ldots, k$, and every two consecutive $g_i$ and $g_{i+1}$ belong to different factors of the amalgamation. 

\begin{notation}\label{notation:ACW}
$A$ and $B$ satisfy the hypothesis of Theorem~\ref{thm:amalgamation}, whose data include a basepoint $x_0\in G$ and a domain $Y_a\in\frakS$ for every $a\in (A\cup B)-C$. As in Notation~\ref{notation:C_iW_i}, for every $i=1,\ldots, k$ let $Y_i=Y_{g_i}$, and set 
$$C_i=g_1\ldots g_{i-1}Cx_0,\quad W_i=g_1\ldots g_{i-1}Y_i.$$
Moreover, for every $i=1,\ldots, k$ let 
$$H_i=\begin{cases}
    g_1\ldots g_{i-1} A\mbox{ if }g_i\in A;\\
g_1\ldots g_{i-1} B\mbox{ if }g_i\in B.
\end{cases}$$ 
This way, $H_i$ contains both $C_i$ and $C_{i+1}$.
\end{notation}

\begin{notation}\label{notation:choice_of_M}
\newcommand{\MK}{200E+10\lambda'+T+LE+20S}
We shall prove Lemma~\ref{lem:nbh_chiuso_per_hp} assuming that $M\ge M(K)$, where
$$M(K)=\MK.$$ 
In the expression above, $L=L(\kappa, \frakS)$ is the constant from Lemma~\ref{lem:gate=intersection_for_groups}, while the constant $S=S(E,\kappa(0),\lambda',\frakS)$ will be defined in the proof of Claim~\ref{claim:li_Ai_close} below (more precisely, in the paragraph named \textbf{Case 1}).
\end{notation}

Now fix any $i=2, \ldots, k-1$. Recall that $\dist_{W_i}(C_i, C_{i+1})\ge M$ by Assumption~\ref{hyp::4} of Theorem~\ref{thm:amalgamation}. Combining this with Lemma~\ref{lem:Cj_close_in_W_i}, we get that
\begin{equation}
    \dist_{W_i}(1,w)\ge \dist_{W_i}(C_i, C_{i+1})-\dist_{W_i}(C_i, 1)-\dist_{W_i}(C_{i+1}, w)\ge 4M/5-10E.
\end{equation}
Notice that, by our choice of $M$ in Notation~\ref{notation:choice_of_M}, we have that $$4M/5-10E\ge 2M/5+2\lambda'.$$ In other words, the balls of radius $M/5$ around $\pi_{W_i}(1)$ and $\pi_{W_i}(w)$ are at distance at least $2\lambda'$. Then, as the projection of $\gamma'$ to $\C W_i$ is a $(\lambda', \lambda')$-quasigeodesic (after reparametrisation), there must be a point $\ell_i\in\gamma'$ such that 
\begin{equation}\label{equation:dist_1,ell}
    \min\{\dist_{W_i}(1,\ell_i), \dist_{W_i}(w,\ell_i)\}>M/5.
\end{equation}

Now, the core of the proof is the following:

\begin{claim}\label{claim:li_Ai_close}
     For every $i=2\ldots, k-1$, the distance between $\ell_i$ and $H_i$ is bounded by some constant $\Psi$, depending on $K$, $\kappa$, $T$, $R$, and $(G,\frakS)$.
\end{claim}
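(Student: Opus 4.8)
\textbf{Proof plan for Claim~\ref{claim:li_Ai_close}.}

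The plan is to show that $\ell_i$ is close to $A_i$ by verifying the "approximate realisation" hypothesis for the hierarchically quasiconvex set $A_i$: I will check that $\dist_W(\ell_i, A_i)$ is uniformly bounded for \emph{every} domain $W\in\frakS$, and then conclude via the realisation clause in Definition~\ref{defn:HQC} (using that $A_i$ is a translate of $A$ or $B$, hence $\kappa$-HQC). The work is thus organised domain-by-domain, according to the position of $W$ relative to the "step domain" $W_i$ (and, in the orthogonal case, relative to the neighbouring step domains $W_{i-1}$, $W_{i+1}$, i.e. to $g_1\cdots g_{i-1}(a^{-1}Y_a)$ and $Y_b$ for the appropriate $a,b$). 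Throughout, I will freely use that $\gamma'$ is a $\lambda'$-hierarchy path, so each $\pi_W(\gamma')$ is an unparametrised $(\lambda',\lambda')$-quasigeodesic, together with the geometry of the $W_j$'s established in Claims~\ref{claim:transverse}--\ref{claim:W1Wk}, Lemma~\ref{lemma:alex_lemma}, and Lemma~\ref{lem:Cj_close_in_W_i}.

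The case analysis I anticipate is as follows. \textbf{Case 1 ($W\transverse W_i$ or $W$ nested-comparable with $W_i$, "the side the projection already separates").} Inequality~\eqref{equation:dist_1,ell} says $\pi_{W_i}(\ell_i)$ is far (more than $M/5$) from both $\pi_{W_i}(1)$ and $\pi_{W_i}(w)$. Since $\pi_{W_i}$ of a hierarchy path subsegment is an unparametrised quasigeodesic, a point of $\gamma'$ that is deep inside $\C W_i$ "between" the two ends is forced, by bounded geodesic image (Lemma~\ref{lem:bgi}) and the consistency/Behrstock machinery, to have its projection to any such $W$ controlled by $\rho^{W_i}_{W}$; and $\rho^{W_i}_{W}$ is in turn close to $\pi_W(C_i)$ or $\pi_W(C_{i+1})\subseteq\pi_W(A_i)$ by Claims~\ref{claim:projection of adjacent Wi} and~\ref{claim:W1Wk} and Lemma~\ref{lem:Cj_close_in_W_i}. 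This is the step where the constant $S$ is introduced, as the $\lambda'$-dependent bound coming from quasigeodesic behaviour of $\pi_W(\gamma')$ relative to the point where it crosses near $\rho^{W_i}_W$. \textbf{Case 2 ($W_i\propnest W$).} Then $\rho^{W_i}_W$ is a coarse point, and since $\ell_i$ moves a long $\C W_i$-distance relative to both endpoints, BGI (Lemma~\ref{lem:bgi}) forces $\pi_W(\ell_i)$, $\pi_W(1)$, $\pi_W(w)$ all to lie near $\rho^{W_i}_W$; as $\pi_W(1)$ or $\pi_W(w)\in\pi_W(A_i)$ this bounds $\dist_W(\ell_i,A_i)$. \textbf{Case 3 ($W\propnest W_i$).} Here I use the large-links / BGI direction: a long geodesic in $\C W_i$ from $\pi_{W_i}(1)$ to $\pi_{W_i}(\ell_i)$ either stays far from $\rho^{W}_{W_i}$ (so $\pi_W(\ell_i)$ is within $E$ of $\pi_W(1)\in\pi_W(A_i)$, done) or passes near it, in which case $\ell_i$ is coarsely in the product region $P_W$ and a short computation using $\pi_W(C_i),\pi_W(C_{i+1})\subseteq\pi_W(A_i)$ together with the $\kappa(0)$-quasiconvexity of $\pi_W(A_i)$ and Lemma~\ref{lem:Cj_close_in_W_i} finishes it. \textbf{Case 4 ($W\orth W_i$).} This is the genuinely new case and where the two extra hypotheses enter. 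If $W$ is orthogonal to $W_i$ but \emph{not} to one of $W_{i-1},W_{i+1}$ — equivalently $W$ is not orthogonal to $g_1\cdots g_{i-1}(a^{-1}Y_a)$ or to $g_1\cdots g_{i-1}g_i\cdots$ — then transversality/consistency with that neighbour (Claim~\ref{claim:distance_of_consecutives}, Lemma~\ref{lemma:alex_lemma}, Behrstock) pins $\pi_W(\ell_i)$ near $\rho^{W_{i\pm1}}_W$, which is near $\pi_W(C_i)$ or $\pi_W(C_{i+1})$; alternatively $\ell_i\in P_{W_i}$ coarsely and $W\nest$ the container of $W_i$, reducing to Case 1 inside that container. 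If instead $W$ is orthogonal to \emph{all three} of $W_{i-1}, W_i, W_{i+1}$, I invoke \textbf{no drift in the orthogonals} (Definition~\ref{defn:no_drift}): after translating, $W$ is orthogonal to both $a^{-1}Y_a$ and $Y_b$, so $\pi_W(A)$ or $\pi_W(B)$ is $R$-dense in $\C W$; since $A_i$ is a translate of one of these and $\pi_W(\ell_i)\in\C W$, this gives $\dist_W(\ell_i,A_i)\le R+E$ directly.

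\textbf{Conclusion.} Collecting the four cases, $\dist_W(\ell_i,A_i)\le\Psi_0$ for all $W\in\frakS$, with $\Psi_0$ depending only on $E$, $\kappa$, $\lambda'$ (hence $K$), $T$, $R$; then the realisation property of the $\kappa$-HQC set $A_i$ yields $\dist_G(\ell_i,A_i)\le\kappa(\Psi_0)=:\Psi$, as desired. \textbf{The main obstacle} I expect is Case 4: orthogonality is exactly the direction in which projections to $W_i$ carry no information, so neither the hierarchy-path quasigeodesic argument nor Behrstock helps, and one is forced to separate "$W$ still feels a nearby step" (handled by containers and the transverse cases) from "$W$ feels no step at all" (which is precisely what the no-drift hypothesis was designed to rule out, and where the counterexample of Subsection~\ref{subsec:no_drift} lives). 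A secondary subtlety is bookkeeping the dependence of $S$ on $\lambda'$ and hence on $K$, so that the final $M(K)$ in Notation~\ref{notation:choice_of_M} is genuinely a function of $K$ alone and the iteration in the proof of Theorem~\ref{thm:amalgamation_of_quasiconvex} terminates.
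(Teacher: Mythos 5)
Your overall skeleton is the same as the paper's: bound $\dist_W(\ell_i,A_i)$ for every domain $W$, split into cases according to the position of $W$ relative to $W_i$ (and, when $W\orth W_i$, relative to $W_{i\pm1}$), then invoke the realisation property of the $\kappa$-HQC coset $A_i$. The non-orthogonal cases, though stated loosely (for $W=W_i$ there is no $\rho^{W_i}_W$, and in the transverse case one must also treat the subcase where $\pi_W(\ell_i)$ is \emph{not} pinned to $\rho^{W_i}_W$, in which $\pi_W(1)$ and $\pi_W(w)$ are forced $3E$-close and one concludes via the quasigeodesic $\pi_W(\gamma')$), can all be repaired along the lines you indicate and match the paper's Cases 1--4.

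The genuine gap is in your Case 4, in the subcase where $W$ is orthogonal to the neighbouring step domains. The no-drift hypothesis (Definition~\ref{defn:no_drift}), applied after translating by $g_1\cdots g_{i-1}$, yields that \emph{either} $\pi_W(g_1\cdots g_{i-1}A)$ \emph{or} $\pi_W(g_1\cdots g_{i-1}B)$ is $R$-dense in $\C W$, i.e.\ either $\pi_W(A_{i-1})$ or $\pi_W(A_i)$ is $R$-dense --- and you have no control over which. If the dense one is $A_{i-1}$, the conclusion $\dist_W(\ell_i,A_i)\le R+E$ does \emph{not} follow ``directly'': density of the projection of one coset says nothing about the projection of the adjacent coset, and your phrase ``$A_i$ is a translate of one of these'' conflates $\pi_W(gA)$ with $\pi_W(A)$, which are unrelated subsets of $\C W$. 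This is exactly where the \emph{fill all squares} hypothesis must enter (it never actually appears in your case analysis, even though you list $T$ among the dependencies): if $\pi_W(A_{i-1})$ is $R$-dense and $\diam_W(A_{i-1})>T$, then since $\diam_{W_i}(A_i)\ge M$ and $W\orth W_i$, Definition~\ref{defn:fill_all_gaps} forces $\pi_W(\gate_{A_{i-1}}(A_i))$ to be $T$-dense in $\pi_W(A_{i-1})$, and Lemma~\ref{lem:gate=intersection_for_groups} identifies this gate coarsely with $C_i\subseteq A_i$, which is what bounds $\dist_W(\ell_i,A_i)$. The same transfer step is needed in your ``mixed'' subcase: when $\rho^U_{W_{i\pm1}}$ are defined, what the estimates of Lemma~\ref{lem:Cj_close_in_W_i} and Claim~\ref{claim:projection of adjacent Wi} actually pin down are $\pi_W(1)$ near $\pi_W(C_{i-1})$ and $\pi_W(w)$ near $\pi_W(C_{i+2})$ --- the \emph{outer} cosets $A_{i-1},A_{i+1}$, not $C_i,C_{i+1}$ as you assert --- so $\ell_i$ is only forced close to the quasiconvex union $\pi_W(A_{i-1}\cup A_i\cup A_{i+1})$ (Lemma~\ref{lem:union_of_quasiconvex_in_hyp}), and if it lands near the wrong piece one again needs fill all squares plus the gate-equals-intersection lemma to return to $A_i$. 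Without these steps the claim is not proved, and the dependence of $\Psi$ on $T$ (and on the constant $L$ of Lemma~\ref{lem:gate=intersection_for_groups}) is unaccounted for.
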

Before proving the Claim, we show that it implies Lemma~\ref{lem:nbh_chiuso_per_hp}. Indeed, we can decompose $\gamma'$ as a union of $\lambda'$-hierarchy paths $\gamma'_i$ with endpoints $\{\ell_i, \ell_{i+1}\}$, where we set $\ell_1=1$ and $\ell_{k}=w$. Every $\gamma'_i$ thus connects two points on some coset of $N_{\Psi}(A\cup B)$. As $A$ and $B$ $T$-fill all squares, by Theorem~\ref{lem:AUB} there exists $\widetilde \kappa$, depending on $\kappa$ and $T$, such that $A\cup B$ is $\widetilde\kappa$-hierarchically quasiconvex; then Remark~\ref{rem:nested_HQC} implies that $N_{\Psi}(A\cup B)$ is $\kappa'$-hierarchically quasiconvex, where $\kappa'$ depends on $\widetilde \kappa$ and $\Psi$. Finally, Lemma~\ref{lem:equivalent_def_HQC} implies that each $\gamma'_i$ is contained in a neighbourhood of a coset of $N_{\Psi}(A\cup B)$, whose radius only depends on $\kappa'$ and $\lambda'$. Summing everything up, we just proved that, if $M\ge M(K)$, then $\gamma'$ decomposes as a union of subpaths, whose distance from $A *_C B$ is bounded above only in terms of $K$, $\kappa$, $T$, $R$, and $(G,\frakS)$.

\begin{figure}[htp]
\centering
\includegraphics[width=\textwidth]{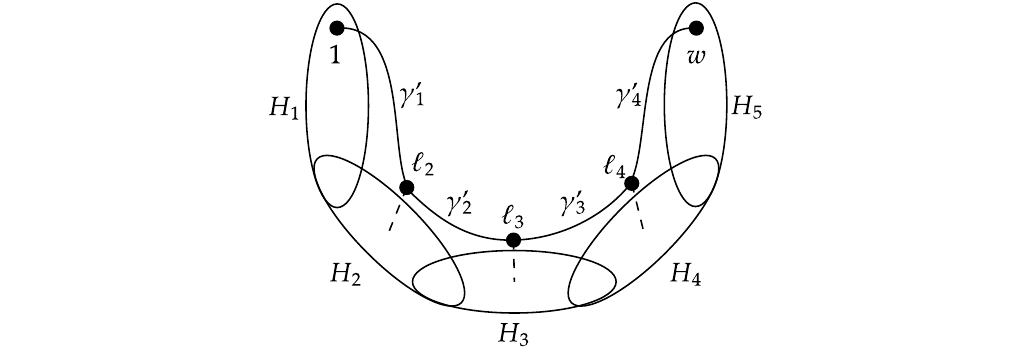}
\caption{The hierarchy path $\gamma'$ must pass $\Psi$-close to each $H_i$ (here denoted by the dashed lines). In other words, $\gamma'$ decomposes as a union of sub-hierarchy paths whose endpoints are uniformly close to some coset of $A\cup B$, and we can invoke Lemma~\ref{lem:AUB} to get that $\gamma'$ lies in a uniform neighbourhood of $A *_C B$.}
\label{fig:catena_degli_li}
\end{figure}

\begin{proof}[Proof of Claim \ref{claim:li_Ai_close}]
Recall that, for every $i=2,\ldots, k-1$, we have a point $\ell_i$ such that $$\min\{\dist_{W_i}(1,\ell_i), \dist_{W_i}(w,\ell_i)\}>M/5,$$ and we want to prove that $\ell_i$ and $H_i$ are uniformly close. To do so, it is enough to prove that, for every $U\in\frakS$, $\dist_U(H_i, \ell_i)$ is uniformly bounded in terms of  $K$, $\kappa$, $T$, $R$, and $(X,\frakS)$, because then we can apply the realisation property of the $\kappa$-HQC $H_i$. There are five cases to analyse, depending on the relation between $U$ and $W_i$.

\par\medskip
\textbf{Case 1: $U=W_i$.} Let $c\in C_i$ be such that $$\dist_{W_i}(1,c)\le \dist_{W_i}(1,C_i)+E\le M/10+6E,$$ where we invoked Lemma~\ref{lem:Cj_close_in_W_i}. Similarly, let $c'\in C_{i+1}$ be such that $$\dist_{W_i}(w,c')\le \dist_{W_i}(w,C_{i+1})+E\le M/10+6E.$$ Fix three geodesics $[\pi_{W_i}(1),\pi_{W_i}(c)]\cup [\pi_{W_i}(c),\pi_{W_i}(c')]\cup [\pi_{W_i}(c'),\pi_{W_i}(w)] \subset \C W_i$. As $\C W_i$ is $E$-hyperbolic and $\pi_{W_i}(\gamma')$ is a $(\lambda', \lambda')$-quasigeodesic (after reparametrisation), there exists a constant $S'$, depending on $E$ and $\lambda'$, such that $\pi_{W_i}(\ell_i)$ is $S'$-close to one of the three geodesics (this is a consequence of e.g. \cite[Theorem III.1.7]{bridsonhaefliger}, plus the fact that geodesic quadrangles in $E$-hyperbolic spaces are $2E$-thin). 

Now, if $\pi_{W_i}(\ell_i)$ is $(2S'+6E)$-close to $[\pi_{W_i}(c),\pi_{W_i}(c')]$, then by $\kappa(0)$-quasiconvexity of $H_i$ we have that $\dist_{W_i}(\ell_i, H_i)\le S$, where $S\coloneq 2S'+6E+ \kappa(0)$ (this is the constant we use in Notation~\ref{notation:choice_of_M} to choose $M$). 

Thus suppose by contradiction that $\pi_{W_i}(\ell_i)$ is at least $(2S'+6E)$-far from the geodesic $[\pi_{W_i}(c),\pi_{W_i}(c')]$, and without loss of generality we can assume that $\pi_{W_i}(\ell_i)$ is $S'$-close to some point $r$ on the geodesic $[\pi_{W_i}(1),\pi_{W_i}(c)]$, as in Figure~\ref{fig:U=Wi}. In particular $\dist_{W_i}(r,c)>S'+6E$, because otherwise $\pi_{W_i}(\ell_i)$ would be at distance at most $(2S'+6E)$ from $\pi_{W_i}(c)$. But then
$$\dist_{W_i}(1,\ell_i)\le \dist_{W_i}(1,r)+\dist_{W_i}(r,\ell_i)=$$
$$=\dist_{W_i}(1,c)- \dist_{W_i}(r,c)+\dist_{W_i}(r,\ell_i)<$$
$$< (M/10+6E)- (S'+6E)+S'=M/10<M/5,$$
contradicting Equation\eqref{equation:dist_1,ell}.

\begin{figure}[htp]
    \centering
    \includegraphics[width=\textwidth]{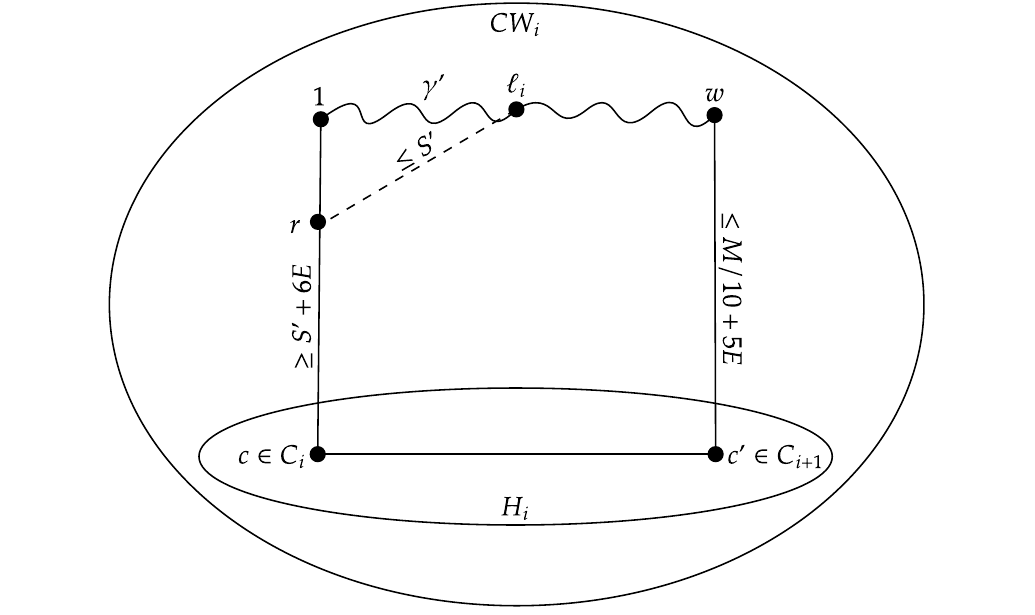}
    \caption{If $\ell_i$ projected too far from $H_i$ in $\C W_i$, then it would also project too close to $1$, contradicting our choice of $\ell_i$.}
    \label{fig:U=Wi}
\end{figure}


\par\medskip
\textbf{Case 2: $U\propnest W_i$.} The projection $\rho^U_{W_i}$ is well-defined. Furthermore, by Case 1 there is some $q\in H_i$ such that $\dist_{W_i}(\ell_i, q)\le S$.

Suppose first that $\dist_{W_i}(\ell_i, U)> S+2E$. Then any geodesic $[\pi_{W_i}(\ell_i), \pi_{W_i}(q)]$ inside $\C W_i$ cannot pass through the $E$-ball around $\rho^U_{W_i}$, so the bounded geodesic image Lemma~\ref{lem:bgi} tells us that $\dist_U(H_i, \ell_i)\le \dist_U(q, \ell_i)\le E$, and we are done.

Thus suppose that $\dist_{W_i}(\ell_i, U)\le S+2E$. As $\dist_{W_i}(1,\ell_i)>M/5$, the triangle inequality yields that 
$$\dist_{W_i}(1,U)\ge \dist_{W_i}(1,\ell_i)-\dist_{W_i}(\ell_i, U)-\diam_{W_i}(\rho^U_{W_i})> M/5-S-3E=$$
$$=M/10+5E + (M/10-S-8E)\ge \dist_{W_i}(1, C_i) + 2E,$$
where we used that, by our choice of $M$ in Notation~\ref{notation:choice_of_M}, $M/10\ge S+10E$, and that $\dist_{W_i}(1, C_i)\le M/10+5E$ by Lemma~\ref{lem:Cj_close_in_W_i}. This means that any geodetic connecting $\pi_{W_i}(1)$ to the closest point in $\pi_{W_i}(C_i)$ cannot pass $E$-close to $\rho^{U}_{W_i}$, and the bounded geodesic image Lemma~\ref{lem:bgi}, applied to the domains $U\propnest W_i$, yields that $\dist_U(1, C_i)\le E$. Symmetrically, one gets that $\dist_U(w, C_{i+1})\le E$. The situation in $\C U$ is depicted in Figure~\eqref{fig:U_inside_Wi}. 

\begin{figure}[htp]
\centering
\includegraphics[width=\textwidth]{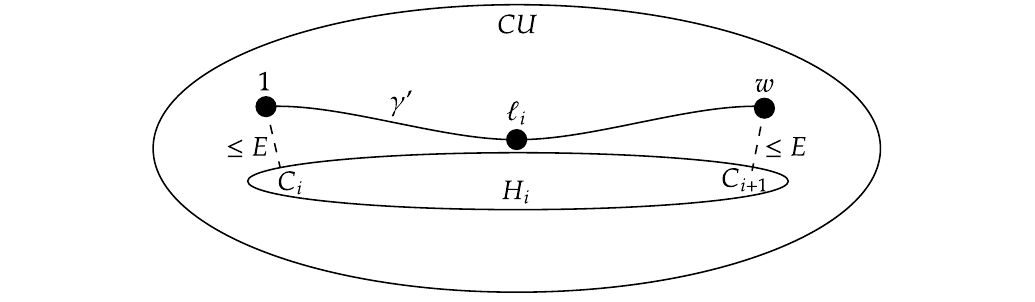}
\caption{The hierarchy path $\gamma'$ projects to a quasigeodesic (after reparametrisation) whose endpoints are in the $E$-neighbourhood of the quasiconvex set $\pi_U(H_i)$.}
\label{fig:U_inside_Wi}
\end{figure}

Now, the projection of $H_{i}$ inside $\C U$ is a $\kappa(0)$-quasiconvex subset, and $\pi_U(\gamma')$ is a $(\lambda', \lambda')$-quasigeodesic (after reparametrisation) whose endpoints $\pi_U(1)$ and $\pi_U(w)$ are within distance at most $E$ from $\pi_U(H_i)$. Therefore, again as a consequence of e.g. \cite[Theorem III.1.7]{bridsonhaefliger}, the distance between $\pi_U(\ell_i)$ and $\pi_U(H_i)$ is bounded in terms of $\kappa(0)$, $\lambda'$, and $E$.

\par\medskip
\textbf{Case 3: $U\transverse W_i$.} We claim that both $H_i$ and $\ell_i$ project uniformly close to $\rho^{W_i}_U$ inside $\C U$. Indeed, $\dist_{W_i}(C_i,C_{i+1})\ge M\ge 4E$, and in particular one between $C_i$ and $C_{i+1}$ is $E$-far from $\rho^U_{W_i}$. Then the Behrstock inequality yields $\dist_{U}(H_i, W_i)\le E$.

In order to bound $\dist_{U}(\ell_i, W_i)$, first notice that the projections of $1$, $\ell_i$, and $w$ to $\C W_i$ are all at distance at least $M/5\ge 4E$ from each other. Thus, again by Behrstock inequality, at least two of these points project $E$-close to $\rho^{W_i}_U$ inside $\C U$. If $\dist_U(\ell_i, W_i)\le E$ we are done; otherwise
$$\dist_U(1, w)\le \dist_U(1, W_i)+\diam_U(\rho^{W_i}_U)+\dist_U(W_i, w) \le 3E.$$
As $\pi_U(\ell_i) $ lies on a $(\lambda',\lambda')$-quasigeodesic between $\pi_U(1) $ and $\pi_U(w) $, we get that $\dist_U(\ell_i, W_i)$ is bounded in terms of $\lambda'$ and $\dist_U(1, w)\le 3E$. 

\par\medskip
\textbf{Case 4: $W_i\propnest U$.} Again, we claim that both $H_i$ and $\ell_i$ project uniformly close to $\rho^{W_i}_U$ inside $\C U$. As pointed out above $\dist_{W_i}(C_i, C_{i+1})\ge 4E$, so by the bounded geodesic image Lemma~\ref{lem:bgi} every geodesic in $\C U$ with endpoints on $\pi_U(C_i)$ and $\pi_U(C_{i+1})$ must pass $E$-close to $\rho^{W_i}_U$. But $\pi_U(H_i)$ is $\kappa(0)$-quasiconvex, hence $\dist_U(W_i, H_i)\le E+\kappa(0)$.

Now, both $\dist_{W_i}(1, \ell_i)$ and $\dist_{W_i}(\ell_i, w)$ are greater than $M/5 \ge E$. Thus any two geodesics $[\pi_{U}(1), \pi_{U}(\ell_i)]$ and $[\pi_{U}(\ell_i), \pi_{U}(w)]$ inside $\C U$ must pass $E$-close to $\rho^{W_i}_U$. In turn, $\pi_U(\gamma')$ is a $(\lambda',\lambda')$-quasigeodesic (after reparametrisation); therefore, again by e.g. \cite[Theorem III.1.7]{bridsonhaefliger}, there exists a constant $\omega\ge 0$, depending only on $\lambda'$ and $E$, such that the segment of $\pi_U(\gamma')$ between $\pi_U(1)$ and $\pi_U(\ell_i)$ is $\omega$-close to the geodesic $[\pi_{U}(1), \pi_{U}(\ell_i)]$. Summing the two facts, we can find a point $p\in  \pi_U(\gamma')$ such that $\dist_U(p,W_i)\le \omega+E$. Arguing similarly for the other segment of $\pi_U(\gamma')$, we get a point $q\in \pi_U(\gamma')$ such that $\dist_U(q,W_i)\le \omega+E$. Now, $\pi_U(\ell_i)$ lies on the segment of $\pi_U(\gamma')$ between $p$ and $q$, as in Figure \ref{fig:Wi_inside_U}, and $\dist_{U}(p,q)\le 2\omega+3E$. This implies that the distance between $\pi_U(\ell_i)$ and $\rho^{W_i}_U$ is controlled in terms of $E$, $\omega$, and $\lambda'$.

\begin{figure}[htp]
\centering
\includegraphics[width=\textwidth]{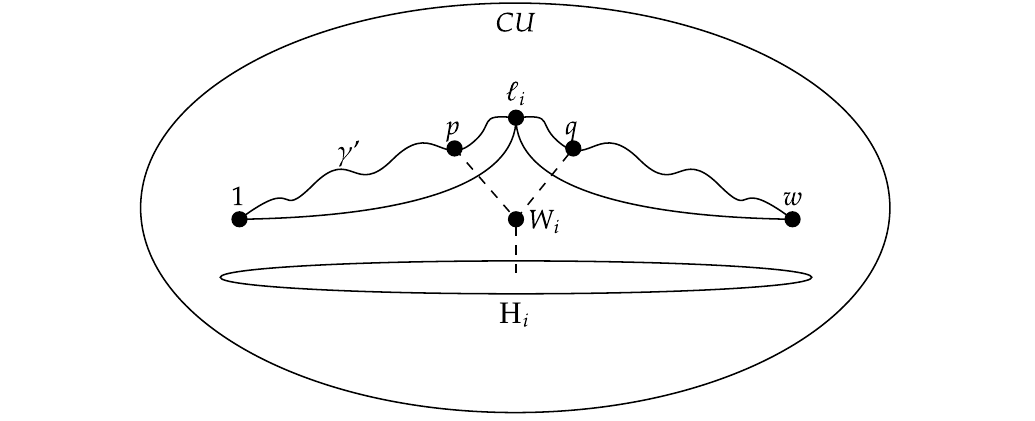}
\caption{As the distance between $p$ and $q$ is uniformly bounded, so is also the length of the $(\lambda', \lambda')$-sub-quasigeodesic of $\pi_U(\gamma')$ between them, on which $\pi_U(\ell_i)$ lies. Thus $\ell_i$ is uniformly close to $\rho^{W_i}_U$, and in turn to $\pi_U(H_i)$.}
\label{fig:Wi_inside_U}
\end{figure}

\par\medskip
\textbf{Case 5: $U\orth W_i$.} This case is itself split into several subcases, as it also involves the relation between $U$ and both $W_{i-1}$ and $W_{i+1}$. 

\textbf{Case 5.1:} First, notice that neither $W_{i-1}$ nor $W_{i+1}$ can be nested in $U$, as otherwise one of them would be orthogonal to $W_i$.

\textbf{Case 5.2:} Suppose that $W_{i-1}$ is also orthogonal to $U$. As $A$ and $B$ have no drift in the orthogonals (Definition~\ref{defn:no_drift}), one of the following happens:
\begin{itemize}
    \item If $\pi_U(H_i)$ is $R$-dense in $\C U$, then in particular $\dist_{U}(\ell_i, H_i)\le R$, and we are done.
    \item Otherwise, $\pi_U(H_{i-1})$ is $R$-dense in $\C U$. If $\diam_U(H_{i-1})\le T$ then $\C U$ has diameter at most $T+2R$, and again we conclude as $\dist_{U}(\ell_i, H_i)\le R+2T$. Otherwise, as $H_{i-1}$ and $H_i$ fill all squares, we must have that $\pi_U(\gate_{H_{i-1}}(H_i))$ is $T$-dense in $\pi_U(H_{i-1})$, and therefore $(T+R)$-dense in $\C U$. Then, as $\pi_U(C_i)$ coarsely coincides with $\pi_U(\gate_{H_{i-1}}(H_i))$ by Lemma~\ref{lem:gate=intersection_for_groups}, we get that the distance between $\pi_U(\ell_i)$ and $\pi_U(C_i)\subseteq \pi_U(H_i)$ is uniformly bounded.
\end{itemize}


\textbf{Case 5.3:} We are left with the cases when both $\rho^{U}_{W_{i-1}}$ and $\rho^{U}_{W_{i+1}}$ are well-defined. First, we notice that
$$M\le \dist_{W_{i-1}}(C_{i-1},C_i)\le \dist_{W_{i-1}}(C_{i-1},U)+\diam_{W_{i-1}}(\rho^U_{W_{i-1}}\cup \rho^{W_i}_{W_{i-1}}) +\dist_{W_{i-1}}(C_i, W_i)\le$$
$$\le \dist_{W_{i-1}}(C_{i-1},U)+4E +M/10+E,$$
where we used Lemma~\ref{lem:close_proj_of_orthogonals} to bound the projections of the two orthogonal domains $U\orth W_i$, and Claim~\ref{claim:projection of adjacent Wi} to bound $\dist_{W_{i-1}}(C_i, W_i)$. Therefore $$\dist_{W_{i-1}}(C_{i-1},U)\ge 9/10M-5E.$$
Furthermore $\dist_{W_{i-1}}(1,C_{i-1})\le M/10+5E$ by Lemma~\ref{lem:Cj_close_in_W_i}, so
$$\dist_{W_{i-1}}(1,U)\ge \dist_{W_{i-1}}(C_{i-1},U)-\diam_{W_{i-1}}(C_{i-1})-\dist_{W_{i-1}}(1,C_{i-1})\ge$$
$$\ge9/10M-5E-M/10-M/10-5E=7/10M-10E.$$
Notice that both $\dist_{W_{i-1}}(1,U)$ and $\dist_{W_{i-1}}(C_{i-1},U)$ are greater than $2E$, by our choice of $M$ in Notation~\ref{notation:choice_of_M}.

Now we claim that $\dist_U(1, C_{i-1})\le 3E$. Indeed, if $W_{i-1}\transverse U$, then the Behrstock inequality~\eqref{item:dfs_transversal} yields that both $\pi_U(1)$ and $\pi_U(C_{i-1})$ are $E$-close to $\rho^{W_{i-1}}_U$. If instead $U\propnest W_{i-1}$ we notice that any geodesic connecting $\pi_{W_{i-1}}(1)$ to $\pi_{W_{i-1}}(C_{i-1})$ lies in the $(M/10+5E)$-neighbourhood of $\pi_{W_{i-1}}(C_{i-1})$, and in particular it cannot pass $E$-close to $\rho^U_{W_{i-1}}$ as 
$$\dist_{W_{i-1}}(C_{i-1}, U)-(M/10+5E)\ge 4M/5-10E\ge 2E;$$
thus the bounded geodesic image Lemma~\ref{lem:bgi} tells us that $\dist_U(1, C_{i-1})\le E$.



Arguing the exact same way, one gets that $\dist_U(w, C_{i+2})\le 3E$, so the picture inside $\C U$ is as in Figure~\eqref{fig:nest_orth_nest}.

\begin{figure}[htp]
\centering
\includegraphics[width=\textwidth]{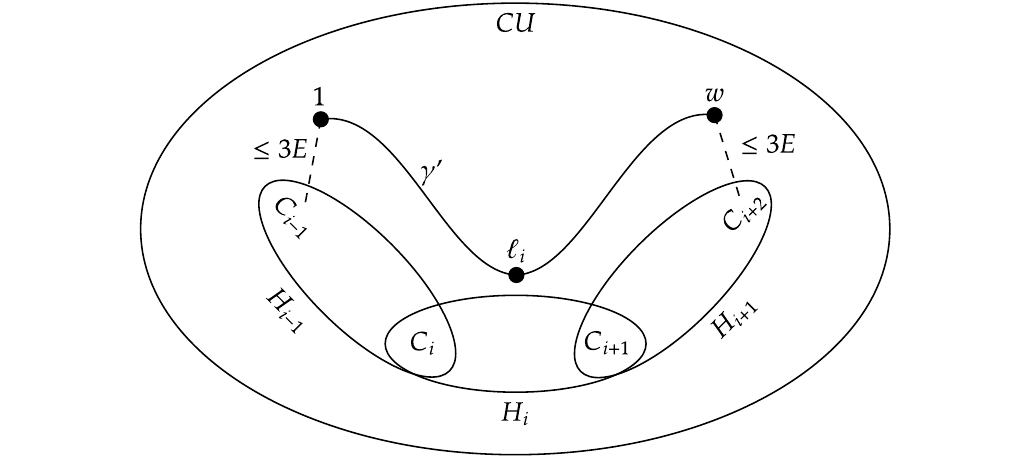}
\caption{The hierarchy path $\gamma'$ projects to a quasigeodesic (after reparametrisation) whose endpoints are $3E$-close to $\pi_U\left(H_{i-1}\cup H_i \cup H_{i+1}\right)$. The latter is a union of quasiconvex subsets of a hyperbolic space, and is therefore quasiconvex.}
\label{fig:nest_orth_nest} 
\end{figure}

Now, the projections of $H_{i-1}$, $H_{i}$, and $H_{i+1}$ inside $\C U$ are all $\kappa(0)$-quasiconvex, so their union is $(\kappa(0)+4E+2)$-quasiconvex by Lemma~\ref{lem:union_of_quasiconvex_in_hyp}. As $\pi_U(\gamma')$ is a $(\lambda', \lambda')$-quasigeodesic (after reparametrisation), whose endpoints $\pi_U(1)$ and $\pi_U(w)$ are within distance at most $3E$ from $\pi_U(C_{i-1})$ and $\pi_U(C_{i+1})$, respectively, we get that $\pi_U(\ell_i)$ must be $\xi$-close to $\pi_U(H_{i-1}\cup H_{i}\cup H_{i+1})$, for some constant $\xi$ depending on $\lambda'$, $\kappa(0)$, and $E$. 

Now, if $\pi_U(\ell_i)$ is $(\xi+T)$-close to $\pi_U(H_{i})$ then we are done. Otherwise, suppose that $\dist_U(\ell_i, H_i)>\xi+T$, so that we can assume without loss of generality that $\pi_U(\ell_i)$ is $\xi$-close to some point $q\in \pi_U(H_{i-1})$. Then by triangle inequality 
$$\diam_U(H_{i-1})\ge\dist_U(C_i,q)\ge \dist_U(C_{i},\ell_i)-\dist_U(\ell_i, q)\ge \dist_U(H_{i}, \ell_i)-\dist_U(\ell_i, q)\ge T.$$
Moreover, $$\diam_{W_i}(H_i)\ge \dist_{W_i}(C_i, C_{i+1})\ge  M\ge T.$$
As $A$ and $B$ fill all squares (Definition~\ref{defn:fill_all_gaps}) and $W_i\orth U$, 
we have that either $\pi_U(\gate_{H_{i-1}}(H_i))$ is $T$-dense in $\pi_U(H_{i-1})$, or $\pi_{W_i}(\gate_{H_{i}}(H_{i-1}))$ is $T$-dense in $\pi_{W_i}(H_{i})$. Combining this with Lemma~\ref{lem:gate=intersection_for_groups}, which states that the gates coarsely coincide with the intersection, and the fact that projection maps are $(E,E)$-coarsely Lipschitz, we get that either $\pi_U(C_i)$ is $(T+LE+E)$-dense in $\pi_U(H_{i-1})$, or $\pi_{W_i}(C_i)$ is $(T+LE+E)$-dense in $\pi_{W_i}(H_{i})$.

However $\dist_{W_i}(C_i, C_{i+1})\ge M>T+LE+E$, again by our choice of $M$; so we must have that $\pi_U(C_i)$ is $(T+LE+E)$-dense in $\pi_U(H_{i-1})$. In turn, this means that
$$\dist_{U}(\ell_i, H_i)\le \dist_U(\ell_i, C_i)\le \dist_U(\ell_i, H_{i-1})+(T+LE+E)\le \xi+T+LE+E,$$
and we are done.
\end{proof}
The proof of Lemma~\ref{lem:nbh_chiuso_per_hp}, and in turn of Theorem~\ref{thm:amalgamation_of_quasiconvex}, is now complete.
\end{proof}



%

\subsection{Why no drift?}\label{subsec:no_drift}
We now show that the conclusion of Theorem~\ref{thm:amalgamation_of_quasiconvex} might not hold if one removes the hypothesis of having no drift in the orthogonals, Definition~\ref{defn:no_drift}. Let $\mathcal F_{a,b}$ be the free group on two generators $a$ and $b$, and let $D_{x,y}\coloneq \langle x,y\,|\,x^2=y^2=1\rangle$ be a copy of $D_{\infty}$ generated by the involutions $x$ and $y$. Let
$$G=\mathcal F_{a,b}\times D_{x_1,y_1}\times D_{x_2,y_2},$$
and let $X$ be the Cayley graph for $G$ with respect to the generators $\{a,b, x_{1}, y_1, x_2, y_2\}$. The $G$-action on $X$, which is a direct product of hyperbolic spaces, makes $G$ into a HHG; in particular, combining \cite[Corollary 8.28 and Theorem 9.1]{HHS_II}, we can find a HHG structure $\frakS$ whose only domains with unbounded coordinate spaces are the following:
\begin{itemize}
    \item For every $g\in \mathcal F_{a,b}$, there is a domain $gL_a$ whose coordinate space is $g\Cay{\langle a\rangle}{a}$, and a domain $gL_b$ defined analogously;
    \item The Bass-Serre tree $T$ of the splitting $\mathcal F_{a,b}=\langle a\rangle * \langle b\rangle$ is a domain, whose coordinate space is the tree itself;
    \item Finally, there are two domains $W_1$ and $W_2$, whose coordinate spaces are, respectively,  $\Cay{D_{x_1,y_1}}{\{x_1,y_1\}}$ and $\Cay{D_{x_2,y_2}}{\{x_2,y_2\}}$.
\end{itemize}
The relations between the above domains are as follows: $T$, $W_1$, and $W_2$ are pairwise orthogonal; for every $g\in \mathcal{F}_{a,b}$, $gL_a$ and $gL_b$ are nested inside $T$; every two domains which are nested inside $T$ are transverse.

Now let $A=\langle a^N x_1x_2\rangle$ and $B=\langle b^N y_1y_2\rangle$, where $N$ is a positive integer to be chosen later. For every $g\in A-\{1\}$ let $Y_g=L_a$, and similarly for every $g\in B-\{1\}$ let $Y_g=L_b$. One can choose $N$ large enough that $A$ and $B$ satisfy the assumptions of Theorem~\ref{thm:amalgamation}. Moreover, notice that the projection of $A$ to every domain which is not $L_a$ has diameter bounded by some constant $K$, while $\pi_{L_a}(A)$ is coarsely dense in $\C L_a$. In particular, $A$ is hierarchically quasiconvex, and similar considerations hold for $B$. We also notice that $A$ and $B$ $(K+1)$-fill all squares, as for every two orthogonal domains $U$ and $V$ we have that $\min\{\diam_U(A),\diam_V(B)\}\le K$. However, $A$ and $B$ do not satisfy Definition~\ref{defn:no_drift}, as for example the unbounded domain $W_1$ is orthogonal to both $L_a$ and $L_b$ but both $A$ and $B$ have bounded projection to $\C W_1$. 

Finally, $A*B$ is not hierarchically quasiconvex. Indeed, the projection of $A*B$ to the product $D_\infty^2\coloneq D_{x_1,y_1}\times D_{x_2,y_2}$ is within finite distance from the diagonal $\langle (x_1y_1,x_2y_2)\rangle$; hence the projection of $A * B$ to both  $W_1$ and $W_2$ is coarsely dense, but a point $p\in D^2_\infty$ can be arbitrarily far from the diagonal. Thus $A*B$ does not satisfy the realisation property from Definition~\ref{defn:HQC}.

\section{Combination of strongly quasiconvex subgroups}\label{sec:combination_strong}
We conclude the paper by studying when our amalgamation procedure preserves the following notion of quasiconvexity:
\begin{defn}
Let $X$ be a geodesic metric space. A subspace $Y\subseteq X$ is $Q$-\emph{strongly quasiconvex}, for some function $Q\colon [0,+\infty)\to [0,+\infty)$ called the \emph{strong convexity gauge} of $Y$, if, given any $\lambda\ge 0$, every $(\lambda, \lambda)$-quasigeodesic with endpoints on $Y$ lies in the $Q(\lambda)$-neighbourhood of $Y$.
\end{defn}
The above notion is equivalent to quasiconvexity in hyperbolic spaces (see e.g. \cite[Theorem III.1.7]{bridsonhaefliger}), but it is stronger in general. 

We also need the following definition from \cite{RST}:
\begin{defn}\label{defn:orth_proj_dich}
    For $\Theta\ge 0$, a subset $A$ of an HHS $(X,\frakS)$ has the \emph{$\Theta$–orthogonal projection dichotomy} if for all $U,V\in\frakS$ with $U\orth V$, if $\diam_U(A)\ge \Theta$ then $\pi_V(A)$ is $\Theta$-dense in $\C V$.
\end{defn}

The following Lemma shows how strong quasiconvexity and hierarchical quasiconvexity are related:

\begin{lem}[{Russell-Spriano-Tran, \cite[Theorem 6.3]{RST}}]\label{lem:russell_spriano_tran}
    Let $(X,\frakS)$ be a hierarchically hyperbolic space. A subspace $Y$ is $Q$-strongly quasiconvex, for some gauge $Q$, if and only if it is $\kappa$-hierarchically quasiconvex and has the $\Theta$-orthogonal projection dichotomy, for some $\kappa$ and $\Theta$. Moreover, the gauge $Q$ and the pair $(\kappa,\Theta)$ each determine the other.
\end{lem}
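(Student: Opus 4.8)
This is an ``if and only if'', and the two implications call for rather different tools; the forward direction largely repackages machinery already assembled above, while the converse is the substantial one.

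\textbf{From strong quasiconvexity to the two conditions.} Hierarchical quasiconvexity is essentially free: a $\lambda$-hierarchy path is in particular an $(E,E)$-quasigeodesic once $E\ge\lambda_0$ (Remark~\ref{rem:existence_of_hpath}), so if $Y$ is $Q$-strongly quasiconvex then every hierarchy path with endpoints on $Y$ lies in $N_{Q(E)}(Y)$, and Lemma~\ref{lem:equivalent_def_HQC} promotes this to $\kappa$-hierarchical quasiconvexity with $\kappa$ determined by $Q(E)$. For the orthogonal projection dichotomy I would argue by contradiction: assume $U\orth V$ with $\diam_U(Y)$ at least a large threshold $\Theta$ (to be pinned down in terms of $Q$) while $\pi_V(Y)$ is not $\Theta$-dense in $\C V$. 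I would then pick $y,y'\in Y$ witnessing the large $\C U$-diameter, a point $q\in\C V$ at controlled distance (of order $\Theta$) from $\pi_V(Y)$, and, via the Partial Realisation axiom applied to the orthogonal pair $\{U,V\}$, points $z,z'$ whose $\C U$-coordinates are those of $y,y'$ and whose $\C V$-coordinate is $\approx q$. Concatenating hierarchy paths from $y$ to $z$, from $z$ to $z'$, and from $z'$ to $y'$, one uses $U\orth V$ together with the distance formula to see that this is a uniform quasigeodesic --- the $\C V$-progress is confined to the outer legs, the $\C U$-progress to the middle one --- which therefore lies in $N_{Q(\lambda)}(Y)$ by strong quasiconvexity. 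But then $z$ has $\C V$-coordinate $\approx q$ within bounded distance of $\pi_V(Y)$, contradicting the choice of $q$ once $\Theta$ is taken large enough relative to $Q$.

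\textbf{From the two conditions to strong quasiconvexity.} Here I would take a $(\lambda,\lambda)$-quasigeodesic $\gamma$ with endpoints $y_0,y_1\in Y$ and reduce to bounding $\dist_U(\gamma(t),Y)$ uniformly over all $U\in\frakS$ and all $t$, since the realisation clause of $\kappa$-hierarchical quasiconvexity then bounds $\dist_X(\gamma(t),Y)$. For a fixed $U$, I would compare $\gamma$ with a hierarchy path $\sigma$ joining $y_0$ to $y_1$: hierarchical quasiconvexity keeps $\sigma$ in a uniform neighbourhood of $Y$, and $\pi_U(\sigma)$ is an unparametrised $(E,E)$-quasigeodesic between two points of the $\kappa(0)$-quasiconvex set $\pi_U(Y)$, hence stays uniformly close to $\pi_U(Y)$ in the hyperbolic space $\C U$. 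If $\pi_U(\gamma(t))$ were far from $\pi_U(Y)$, and hence from $\pi_U(\sigma)$, the point is that --- by the now-standard interplay between quasigeodesics, hierarchy paths, active intervals and product regions in an HHS --- this can only occur when $\gamma(t)$ sits deep inside a product region $P_W$ along one of whose factors $Y$ is already spread out (i.e.\ $Y$ has large projection to some coordinate space nested in, or orthogonal to, $W$). The orthogonal projection dichotomy then forces $Y$ to be coarsely dense in the relevant factor, hence coarsely dense in $P_W$, so $\gamma(t)$ is close to $Y$ after all. Assembling the resulting uniform per-domain bounds and feeding them into realisation produces a strong convexity gauge $Q$ depending only on $\kappa$, $\Theta$ and $(X,\frakS)$; the reverse dependence was obtained in the first half.

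\textbf{The main obstacle.} The delicate point is the claim in the converse direction that a quasigeodesic can stray far from $Y$ only while passing through a product region of a domain where $Y$ is already spread out. Unlike a hierarchy path, an arbitrary $(\lambda,\lambda)$-quasigeodesic need not project to a quasigeodesic in each $\C U$, so pinning down its interaction with the product-region structure --- which is exactly the technical heart of the Russell--Spriano--Tran argument we are citing --- is the step I would expect to dominate a complete proof.
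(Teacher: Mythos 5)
The paper does not prove this statement at all: it is quoted from Russell--Spriano--Tran \cite[Theorem 6.3]{RST}, so there is no internal argument to compare yours against; the only meaningful benchmark is the RST proof itself. Measured against that, your forward direction is the right outline: strong quasiconvexity gives hierarchical quasiconvexity because hierarchy paths are uniform quasigeodesics (Remark~\ref{rem:existence_of_hpath} together with Lemma~\ref{lem:equivalent_def_HQC}), and the orthogonal projection dichotomy comes from a partial-realisation ``detour''. Even there, one point you would still need to nail down is that your three-leg concatenation is a $(\lambda,\lambda)$-quasigeodesic with $\lambda$ independent of $\Theta$ (otherwise $Q(\lambda)$ grows with $\Theta$ and the contradiction evaporates); this forces you to choose $q$ at distance \emph{comparable to}, not merely at least, the $\C U$-displacement, and to lower-bound distances between points on different legs via the uniqueness axiom or the distance formula. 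Routine, but not free.

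The genuine gap is the converse, and you flag it yourself: the claim that a $(\lambda,\lambda)$-quasigeodesic can stray far from $Y$ only while traversing a product region $P_W$ in whose coordinates $Y$ is already spread out is essentially the content of the theorem being cited, not a fact you may invoke. An arbitrary quasigeodesic carries no control on its coordinate projections, so comparing $\gamma$ with a hierarchy path $\sigma$ bounds nothing about $\dist_U(\gamma(t),Y)$ directly; Russell--Spriano--Tran get around this by establishing a contraction property for the gate map $\gate_Y$ from hierarchical quasiconvexity plus the orthogonal projection dichotomy (the dichotomy is exactly what kills the orthogonal ``escape routes''), and then deducing strong quasiconvexity from contraction --- the analysis that occupies the bulk of their Section~6. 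As written, your second half asserts the conclusion of that analysis rather than proving it, so the proposal is an accurate road map but not a proof; for the purposes of this paper the correct move is the one the author makes, namely to cite \cite[Theorem 6.3]{RST} outright.
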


\begin{thm}\label{thm:amalgamation_of_strongly_quasiconvex_final_version}
 Let $(G,\frakS)$ be a HHG, let $A,B\le G$ be two $Q$-strongly quasiconvex subgroups of $G$, and let $C=A\cap B$. Suppose that there exists $M\ge0$ such that $A$ and $B$ satisfy the hypotheses of Theorem~\ref{thm:amalgamation}, for some choice of $Y_a,Y_b\in \frakS$ for every $a\in A-C$ and every $b\in B-C$.

There exists a positive constant $\mathfrak M\ge 0$ and a function $\mathfrak{Q}\colon[0,+\infty)\to [0, +\infty)$, both depending on $Q$ and $(G,\frakS)$, such that if $M\ge \mathfrak{M}$ then $\langle A,B\rangle_G\cong A*_C B$ is $\mathfrak{Q}$-strongly quasiconvex in $G$.
\end{thm}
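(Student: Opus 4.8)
The plan is to route everything through the Russell--Spriano--Tran dichotomy (Lemma~\ref{lem:russell_spriano_tran}). Since $A$ and $B$ are $Q$-strongly quasiconvex, they are $\kappa$-hierarchically quasiconvex and have the $\Theta$-orthogonal projection dichotomy for some $\kappa,\Theta$ determined by $Q$; and to conclude it is enough to prove that $D:=\langle A,B\rangle_G=A*_C B$ is $\mathfrak k$-hierarchically quasiconvex and has the $\Theta'$-orthogonal projection dichotomy, for suitable $\mathfrak k$ and $\Theta'$. So the proof has two halves.

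\emph{Hierarchical quasiconvexity of $D$.} I would derive from the $\Theta$-OPD of $A$ and $B$ that $A,B$ both fill all squares (Definition~\ref{defn:fill_all_gaps}) and have no drift in the orthogonals (Definition~\ref{defn:no_drift}), and then invoke Theorem~\ref{thm:amalgamation_of_quasiconvex}. For filling squares: given $U\orth V$, if $\diam_V(B)\le\Theta$ the requirement is automatic, since $\gate_B(A)\ne\emptyset$ and any non-empty subset of a set of diameter $\le\Theta$ is $\Theta$-dense in it; symmetrically if $\diam_U(A)\le\Theta$. Otherwise $\diam_U(A)\ge\Theta$, so the $\Theta$-OPD of $A$ makes $\pi_V(A)$ $\Theta$-dense in $\C V$, and then the closest-point description of gates (Definition~\ref{defn:gate}) forces $\pi_V(\gate_B(A))$ to be $O_{\kappa,\Theta}(1)$-dense in $\pi_V(B)$. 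For no drift: if $a\in A-C$, $b\in B-C$ and $U\orth a^{-1}Y_a$ (one need not even use $U\orth Y_b$), then assumption~\ref{hyp::4} gives $\diam_{Y_a}(A)\ge\dist_{Y_a}(Cx_0,aCx_0)\ge M$, and applying the isometry $a^{-1}$, which fixes $A$ setwise, yields $\diam_{a^{-1}Y_a}(A)=\diam_{Y_a}(A)\ge M\ge\Theta$; now the $\Theta$-OPD of $A$ makes $\pi_U(A)$ $\Theta$-dense in $\C U$. Hence, once $M$ exceeds the constant $\mathfrak M$ from Theorem~\ref{thm:amalgamation_of_quasiconvex} for the values $T=T(\kappa,\Theta)$ and $R=\Theta$ just produced, $D$ is $\mathfrak k$-HQC with $\mathfrak k$ depending only on $Q$ and $(G,\frakS)$.

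\emph{Orthogonal projection dichotomy for $D$.} Fix $U\orth V$ with $\diam_U(D)\ge\Theta'$; we must show $\pi_V(D)$ is $\Theta'$-dense in $\C V$. Since $\diam_U(D)$ is the supremum of $\dist_U(d_0x_0,d_1x_0)$ over $d_0,d_1\in D$, pick a pair nearly realising it and left-translate by $d_0^{-1}$: this fixes $D$ setwise, sends $(U,V)$ to another orthogonal pair, leaves $\diam_U(D)$ unchanged and does not affect whether $\pi_V(D)$ is $\Theta'$-dense in $\C V$, so we may assume $d_0=1$ and $d_1=w=g_1\cdots g_k c$ in normal form. The crucial reduction is the claim that, provided $\Theta'$ is large enough (in terms of $Q$, $T$, $R$, $(G,\frakS)$), some coset $gH$ with $g\in D$, $H\in\{A,B\}$ satisfies $\diam_U(gH)\ge\Theta$. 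Granting it, $gH$ is the image of $H$ under an isometry of $G$, hence itself $Q$-strongly quasiconvex, hence has the $\Theta$-OPD by Lemma~\ref{lem:russell_spriano_tran}; since $\diam_U(gH)\ge\Theta$ and $U\orth V$, $\pi_V(gH)$ is $\Theta$-dense in $\C V$, and as $gH\subseteq D$ we conclude $\pi_V(D)$ is $\Theta\le\Theta'$-dense in $\C V$, as required.

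\emph{The core claim, and the main obstacle.} Proving the claim is where the work lies, and I expect it to be the hard part. I would argue along the amalgamation skeleton of Theorem~\ref{thm:amalgamation}: the domains $W_i=g_1\cdots g_{i-1}Y_{g_i}$ form a transverse chain (Claims~\ref{claim:transverse},~\ref{claim:distance_of_consecutives} and Lemma~\ref{lemma:alex_lemma}) along which $\pi_{W_k}$ already separates $x_0$ from $wx_0$ (Claim~\ref{claim:large projection}), and each coset $A_i=g_1\cdots g_{i-1}(A\text{ or }B)$ has $\diam_{W_i}(A_i)\ge M$. One then analyses the position of $U$ relative to this chain, exactly as in the five-case analysis behind Lemma~\ref{lem:nbh_chiuso_per_hp}, re-using \emph{filling squares} and \emph{no drift} precisely where Cases~5.2--5.3 do. For instance, if $U\orth W_i$ for some $i$, then $\diam_{W_i}(A_i)\ge M\ge\Theta$ together with the $\Theta$-OPD of the strongly quasiconvex coset $A_i$ forces $\pi_U(A_i)$ to be $\Theta$-dense in $\C U$, so $\diam_U(A_i)$ is as large as we like; the transverse and $\nest$-comparable cases require the more delicate bookkeeping of Lemma~\ref{lem:nbh_chiuso_per_hp}, and this is where I expect the bulk of the case-work. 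The essential difficulty is exactly the passage from ``$D$ has a large projection to $U$'' to ``\emph{some single} coset of $A$ or $B$ has a large projection to $U$'': a priori the projection to $U$ could accumulate along the infinitely many cosets making up $D$ with no single coset responsible --- this is precisely the drift phenomenon that destroys both hierarchical quasiconvexity and the orthogonal projection dichotomy in the counterexample of Subsection~\ref{subsec:no_drift} --- and ruling it out is exactly what the combined force of square-filling, no-drift, and the transversality (rather than orthogonality) of the chain $\{W_i\}$ is there to provide.
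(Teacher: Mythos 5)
Your first half (hierarchical quasiconvexity of $A*_CB$) is essentially the paper's argument: derive \emph{fill all squares} and \emph{no drift in the orthogonals} from the $\Theta$-orthogonal projection dichotomy of $A$ and $B$ and invoke Theorem~\ref{thm:amalgamation_of_quasiconvex}; the details you give for those two derivations are fine. The gap is in the second half. Your reduction of the orthogonal projection dichotomy for $D=A*_CB$ to the claim that \emph{some single coset $gH$ has $\diam_U(gH)\ge\Theta$} is not what the paper proves, and it is not achievable in general: precisely in the case where some $W_r$ is nested in $U$, every coset $A_i$ can have $U$-projection of diameter $<\Theta$ (a coset can move a lot in $\C W_r$ while staying coarsely at $\rho^{W_r}_U$ in $\C U$), and $\diam_U(D)$ becomes large only by accumulation along the chain of cosets. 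The paper's proof does not rule this scenario out; it handles it by changing domain: since $W_r\nest U$ and $U\orth V$, also $W_r\orth V$, and $\diam_{W_r}(A_r)\ge M\ge\Theta$, so the coset's orthogonal projection dichotomy applied to the pair $(W_r,V)$ gives directly that $\pi_V(A_r)$ is $\Theta$-dense in $\C V$ --- no coset with large $U$-projection is ever produced. Your final paragraph identifies the ``accumulation'' issue as the essential difficulty, but your proposed claim is exactly the statement that fails there, so the strategy needs this extra maneuver to go through.

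Beyond that, the proof of your core claim is only sketched, and the missing parts are the substance of the paper's argument in the regime $\max_i\diam_U(A_i)<\Theta$: one must first select an index $j$ so that the cosets $A_r$ with $|j-r|\le 3$ project in $\C U$ far from both $1$ and $w$; then rule out $U\transverse W_r$ via the Behrstock inequality (this uses the index selection --- without it transversality certainly occurs for cosets far from the ``active'' part of $w$); rule out $U$ orthogonal to two consecutive $W_{r-1},W_r$ via no drift; and rule out $U$ being nested in three of the seven domains $W_{j-3},\dots,W_{j+3}$ via the consistency axiom together with Claim~\ref{claim:distance_of_consecutives}. The resulting counting argument (no two consecutive orthogonals and at most two nestings among seven forces some $W_r\nest U$, which is the case discussed above) is what closes the proof. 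None of this follows formally from Lemma~\ref{lem:nbh_chiuso_per_hp}, which bounds the distance from points of a hierarchy path to the cosets $A_i$ and is a genuinely different statement; so as written the second half of your proposal has a real gap both in the reduction and in the case analysis.
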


In view of Lemma \ref{lem:russell_spriano_tran}, Theorem~\ref{thm:amalgamation_of_strongly_quasiconvex_final_version} can be rephrased in the following form, which is the one we shall prove:

\begin{thm}\label{thm:amalgamation_of_strongly_quasiconvex}
 Let $(G,\frakS)$ be a HHG, let $A,B\le G$ be two $\kappa$-hierarchically quasiconvex subgroups of $G$, and let $C=A\cap B$. Suppose that:
 \begin{itemize}
     \item There exists $M\ge100E$ such that $A$ and $B$ satisfy the hypotheses of Theorem~\ref{thm:amalgamation};
     \item There exists $\Theta\ge 0$ such that $A$ and $B$ have the $\Theta$-orthogonal projection dichotomy (Definition~\ref{defn:orth_proj_dich}).
 \end{itemize}
Then there exist positive constants $\mathfrak M, \mathfrak{T}\ge 0$ and a function $\mathfrak{k}\colon[0,+\infty)\to [0, +\infty)$, all depending on $\kappa$, $\Theta$, and $(G,\frakS)$, such that if $M\ge \mathfrak{M}$ then $\langle A,B\rangle_G\cong A*_C B$ is $\mathfrak{k}$-hierarchically quasiconvex in $G$, and has the $\mathfrak{T}$-orthogonal projection dichotomy.
\end{thm}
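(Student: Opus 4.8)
Write $D:=\langle A,B\rangle_G$. There are two things to prove: that $D\cong A*_CB$ is $\mathfrak k$-hierarchically quasiconvex, and that it has the $\mathfrak T$-orthogonal projection dichotomy. For the first I would simply \emph{reduce to Theorem~\ref{thm:amalgamation_of_quasiconvex}}, by deriving its two extra hypotheses from the $\Theta$-orthogonal projection dichotomy of $A$ and $B$. Given $U\orth V$: if $\diam_U(A)<\Theta$ then any subset of $\pi_U(A)$, in particular $\pi_U(\gate_A(B))$, is $\Theta$-dense in $\pi_U(A)$; while if $\diam_U(A)\ge\Theta$ the dichotomy for $A$ makes $\pi_V(A)$ $\Theta$-dense in $\C V$, and since by Definition~\ref{defn:gate} $\pi_V(\gate_B(A))$ coarsely coincides with the closest-point projection of $\pi_V(A)$ onto $\pi_V(B)$, it is then $O(\Theta+E)$-dense in $\pi_V(B)$; so $A,B$ fill all squares (Definition~\ref{defn:fill_all_gaps}). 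For ``no drift'' (Definition~\ref{defn:no_drift}), observe that for $a\in A-C$ one has $\diam_{a^{-1}Y_a}(A)=\diam_{Y_a}(A)\ge\dist_{Y_a}(Cx_0,aCx_0)\ge M$ by assumption~\ref{hyp::4} (and symmetrically for $B$); hence whenever a domain $U$ is orthogonal to $a^{-1}Y_a$ the dichotomy for $A$ forces $\pi_U(A)$ to be $\Theta$-dense in $\C U$, so Definition~\ref{defn:no_drift} holds with $R=\Theta$ as long as $M\ge\Theta$. Theorem~\ref{thm:amalgamation_of_quasiconvex} then yields the function $\mathfrak k$ and a threshold depending only on $\kappa,\Theta,(G,\frakS)$.

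\textbf{Reducing the dichotomy for $D$ to that of $A$ and $B$.} Fix $U\orth V$ with $\diam_U(D)\ge\mathfrak T$ (with $\mathfrak T$ large, chosen at the end). It suffices to exhibit $g\in D$ and a domain $Z$ with $Z\orth V$ and $\diam_{g^{-1}Z}(A)\ge\Theta$ or $\diam_{g^{-1}Z}(B)\ge\Theta$: indeed $g^{-1}Z\orth g^{-1}V$, so the dichotomy for $A$ (resp.\ $B$) makes $\pi_{g^{-1}V}$ of that factor $\Theta$-dense in $\C(g^{-1}V)$, hence $\pi_V(gA)$ (resp.\ $\pi_V(gB)$) is $\Theta$-dense in $\C V$, and $gA,gB\subseteq D$. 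To produce such data, pick $h\in D$ with $\dist_U(x_0,hx_0)\ge\mathfrak T-2E$, write the normal form $h=g_1\cdots g_kc$, and set up $C_i,W_i,A_i$ as in Notations~\ref{notation:C_iW_i} and~\ref{notation:ACW}, recalling $\diam_{W_i}(A_i)\ge\dist_{W_i}(C_i,C_{i+1})\ge M$ and that the $W_i$ are pairwise transverse and lie ``in a row'' (Claims~\ref{claim:transverse},~\ref{claim:distance_of_consecutives} and Lemma~\ref{lemma:alex_lemma}). Two cases settle most configurations. If $\diam_U(A_i)\ge\Theta$ for some $i$, take $Z=U$ and $g=g_1\cdots g_{i-1}$. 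If some middle $W_i$ is nested in $U$, then by the orthogonality axiom of Definition~\ref{defn:Hierarchical_space} (descent of $\orth$ to nested domains) $W_i\orth V$, and $\diam_{W_i}(A_i)\ge M\ge\Theta$, so take $Z=W_i$ and $g=g_1\cdots g_{i-1}$ (if $W_i=U$ this is the previous case). In either case we are done.

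\textbf{The tree-like case, and the main obstacle.} The remaining situation is when no $\diam_U(A_i)$ reaches $\Theta$ --- which forces $k\gtrsim\mathfrak T/\Theta$, since $C_i,C_{i+1}\subseteq A_ix_0$ gives $\dist_U(C_i,C_{i+1})\le\diam_U(A_i)$ --- and every middle $W_i$ is transverse to $U$ or strictly contains $U$; it cannot be orthogonal to $U$, for then the dichotomy for the coset $A_i$ would make $\pi_U(A_i)$ $\Theta$-dense and return us to the previous case. This is the ``Bass--Serre tree direction'': $\pi_U(D)$ is large because $D$ spreads along $\C U$ through the amalgam, not because a single coset does. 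Here I would use the consistency and Behrstock inequalities together with $\diam_U(A_i)<\Theta$ and $\diam_{W_i}(A_i)\ge M$ to place the marker $\rho^{W_i}_U$ (or the relevant value of the projection $\rho^{W_i}_U$, when $U\propnest W_i$) within $O(\Theta+E)$ of $\pi_U(C_i)\approx\pi_U(C_{i+1})$; by Lemma~\ref{lem:Cj_close_in_W_i} and the row structure these markers march monotonically along a geodesic of $\C U$ of length comparable to $\mathfrak T$, consecutive ones $O(\Theta)$ apart. Feeding this into the large links axiom in $\C U$ should then produce a domain $Z\propnest U$ with $\diam_Z(D)$ large and witnessed by a single coset $g(A\text{ or }B)$; since $Z\propnest U$ and $U\orth V$ force $Z\orth V$, this supplies the data of the previous paragraph. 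One finally chooses $\mathfrak M,\mathfrak T$ (depending only on $\kappa,\Theta,(G,\frakS)$, with the conclusion demanded for all $M\ge\mathfrak M$) large enough that every coarse estimate closes, and applies Lemma~\ref{lem:russell_spriano_tran} to repackage ``$\mathfrak k$-HQC plus $\mathfrak T$-orthogonal projection dichotomy'' as strong quasiconvexity, proving Theorem~\ref{thm:amalgamation_of_strongly_quasiconvex_final_version}. The \textbf{main obstacle} is precisely this tree-like case: extracting from the marching markers $\rho^{W_i}_U$ a genuinely nested domain that carries the $\C U$-spread \emph{and} whose largeness comes from a single coset --- equivalently, ruling out that $\pi_U(D)$ could grow in a ``purely tree-like'' way with nothing useful nested in $U$ --- is the delicate step, and it is where one really needs that $A$ and $B$ (hence all their cosets) satisfy the orthogonal projection dichotomy rather than mere hierarchical quasiconvexity.
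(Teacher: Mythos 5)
Your first half is correct and is essentially the paper's own reduction: deriving ``fill all squares'' and ``no drift'' from the $\Theta$-orthogonal projection dichotomy and quoting Theorem~\ref{thm:amalgamation_of_quasiconvex} is exactly how the paper obtains $\mathfrak k$-hierarchical quasiconvexity. The easy cases of the dichotomy for $A*_CB$ (some coset $A_i$ with $\diam_U(A_i)\ge\Theta$; some $W_i\nest U$) also match the paper, and your observation that $W_i\orth U$ can be excluded directly -- since $\diam_{W_i}(A_i)\ge M\ge\Theta$ would make $\pi_U(A_i)$ $\Theta$-dense in a coordinate space of diameter $\ge\mathfrak T$, returning you to the first case -- is a legitimate small shortcut (the paper instead only excludes \emph{consecutive} orthogonality, using the no-drift property).

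The genuine gap is the ``tree-like case'' you flag yourself: when every relevant $W_i$ is transverse to $U$ or properly contains $U$, your plan is to extract, via the large links axiom and the ``marching markers'' $\rho^{W_i}_U$, a domain $Z\propnest U$ whose large diameter is witnessed by a single coset. This step is not carried out, and it is not clear it can be: in that configuration the domains $W_i$ that witness the largeness of the cosets are precisely \emph{not} nested in $U$, and a priori $\pi_U(A*_CB)$ could grow in increments of size $<\Theta$ per coset with nothing useful nested in $U$, so large links (which only sees domains nested in $U$) has no obvious reason to return a single-coset witness. The paper resolves this case by showing it is \emph{impossible} rather than by constructing anything: using that all $\diam_U(A_i)<\Theta$ while $\dist_U(1,w)\ge\mathfrak T/2$, it selects an index $j$ so that every $A_r$ with $|j-r|\le 3$ projects at least $\mathfrak T/4-7\Theta$ from both $\pi_U(1)$ and $\pi_U(w)$; then $U\transverse W_r$ is ruled out by the Behrstock inequality (it would force both $1$ and $w$ to project $E$-close to $\rho^U_{W_r}$ in $\C W_r$, contradicting the amalgamation estimates of Lemma~\ref{lem:Cj_close_in_W_i}), and $U\propnest W_r$ cannot hold for three of the seven indices in the window (the consistency axiom would give $\dist_{W_b}(W_a,W_c)\le 3E$, against Claim~\ref{claim:distance_of_consecutives} and Lemma~\ref{lemma:alex_lemma}). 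Together with the orthogonality exclusion this exhausts all configurations, so the remaining case is vacuous. The missing ingredient in your sketch is exactly this window selection around $j$ and the ensuing contradictions; without it (or a worked-out substitute for your large-links extraction), the proof of the $\mathfrak T$-orthogonal projection dichotomy for $A*_CB$ is incomplete.
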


\begin{proof}[Proof of Theorem \ref{thm:amalgamation_of_strongly_quasiconvex}]
    Firstly, we prove that, if $M\ge \Theta$, then the orthogonal projection dichotomy for $A$ and $B$ implies the second and third hypotheses of Theorem~\ref{thm:amalgamation_of_quasiconvex} (Definitions~\ref{defn:fill_all_gaps} and~\ref{defn:no_drift}), for some constants $T$ and $R$ depending on $\kappa$, $\Theta$, and $(G,\frakS)$.

\par\medskip
\textbf{$A$ and $B$ fill all squares.} Let $U,V\in\frakS$ be such that $U\orth V$, and suppose that $$\min\{\diam_U(A), \diam_V(B)\}\ge 2\Theta+1.$$
By the orthogonal projection dichotomy, $\pi_U(B)$ is $\Theta$-dense in $\C U$. This means that, for every $a\in A$, there is some $b\in B$ such that $\dist_U(a,b)\le \Theta$, and in particular $\dist_U(a, \gate_A(b))\le 2\Theta+1$ as $\pi_U(\gate_A(b))$ is defined by taking the coarse closest point projection of $\pi_U(b)$ onto $\pi_U(A)$. Hence $\pi_U(\gate_A(B))$ is $(2\Theta+1)$-dense in $\pi_U(A)$, that is, we proved that $A$ and $B$ $T$-fill all squares for $T=2\Theta+1$. 

\par\medskip
\textbf{$A$ and $B$ have no drift in the orthogonals.} Next, notice that the $\Theta$-orthogonal projection dichotomy for $A$ and $B$ implies that $A$ and $B$ have no drift in the orthogonals, for $R=\Theta$. Indeed, if $b\in B-C$ and $U\in\frakS$ is orthogonal to $Y_b$, then $\pi_U(B)$ is $\Theta$-dense in $\C U$, as we know that $\diam_{Y_b}(B)\ge \dist_{Y_b}(C,bC)\ge M\ge\Theta$.

\par\medskip
\newcommand{\DIAMETER}{100\Theta+100E+4R+1}
\textbf{Orthogonal projection dichotomy.} Now assume that 
$$M\ge \Theta+4E+\mathfrak M,$$
where $\mathfrak M$ is the constant from Theorem~\ref{thm:amalgamation_of_quasiconvex} (which in turn depends on $\kappa$, $E$, and the constants $T$ and $R$ from the previous paragraphs). This choice of $M$ grants the hierarchical quasiconvexity of $A*_C B$. Our final goal is to prove that $A*_C B$ satisfies the $\mathfrak{T}$-orthogonal projection dichotomy, where \begin{equation}\label{equation:mathfrakT}
    \mathfrak{T}=\DIAMETER.
\end{equation}

Let $U\orth V$ be such that $\diam_U(A*_C B)\ge \mathfrak{T}$. Up to the action of the group, we can assume that $\dist_U(1,w)\ge \mathfrak{T}/2$, where $w=g_1\ldots g_k c$ is some element of $A*_C B$, with every $g_i$ in a different factor than $g_{i+1}$. Define $H_i$, $C_i$, and $W_i$ as in Notation~\ref{notation:ACW}.

If $\diam_U(H_i)\ge \Theta$ for some $i$ then, by the orthogonal projection dichotomy for either $A$ or $B$, we have that $\pi_V(H_i)$ is $\Theta$-dense in $\C V$. In particular $\pi_V(A*_C B)$ is $\mathfrak{T}$-dense in $\C V$, and we are done. 

\par\medskip
Thus suppose that $\max_{i=1, \ldots, k}\{\diam_U(H_i)\}< \Theta$. Our current goal is to find an index $j$ such that, inside $U$, the factor $H_{r}$ projects far from both $1$ and $w$ whenever the difference $|j-r|$ is sufficiently small, as we will clarify in Equation~\eqref{eq:far_from_1w} below.

Let $j\le k$ be the first index for which $$\dist_U(1, H_j)\ge \mathfrak T/4-\Theta$$ (such an index exists as, for example, $\dist_U(1, H_k)\ge \dist_U(1, w)-\diam_U(H_k)\ge \mathfrak T/2-\Theta$). Notice that $j>3$, as $1\in H_1$ and $\diam_U(H_1\cup H_2\cup H_3)\le 3\Theta$, which is strictly less than $\mathfrak T/4-\Theta$ by our choice of $\mathfrak T$, Equation~\eqref{equation:mathfrakT}. Furthermore $\dist_U(1, H_{j-1})\le \mathfrak T/4-\Theta$, by minimality of $j$. Hence
$$\dist_U(H_j, w)\ge \dist_U(1, w)-\dist_U(1, H_{j-1})-\diam_U(H_{j-1}\cup H_j)\ge \mathfrak T/4-3\Theta,$$
where we used that $H_{j-1}$ intersects $H_j$ and therefore $\diam_U(H_j\cup H_{j-1})\le 2\Theta$. Summarising, we found an index $j$ such that
$$\min\{\dist_U(1, H_j), \dist_U(H_j,w)\}\ge \mathfrak T/4-3\Theta.$$
Now notice that, whenever $|j-r|\le 3$, we have that 
\begin{equation}\label{eq:far_from_1w}
    \min\{\dist_U(1, H_{r}), \dist_U(H_{r},w)\}\ge \mathfrak T/4-7\Theta.\end{equation}
Indeed
$$\dist_U(1, H_{r})\ge\dist_U(1, H_{j})-\diam_U\left(\bigcup_{t=j}^{r}H_t\right),$$
and as each $H_t$ intersects the next one we have that
$$\diam_U\left(\bigcup_{t=j}^{r}H_t\right)\le \sum_{t=j}^{r}\diam_U(H_r)\le (|j-r|+1)\Theta\le 4\Theta.$$ 
The same argument shows that $\dist_U(H_{r},w)\ge \mathfrak T/4-7\Theta$, hence the situation in $\C U$ is as in Figure~\eqref{fig:block_of_close_Ai}.

\begin{figure}[htp]
    \centering
    \includegraphics[width=\textwidth]{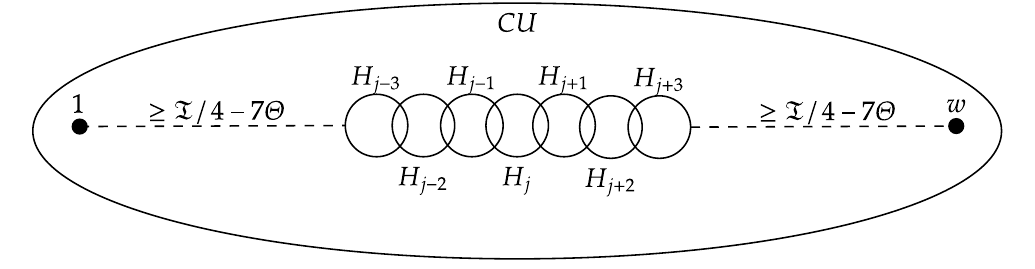}
    \caption{Every $\pi_U(H_i)$ has diameter at most $\Theta$, and intersects non-trivially both $\pi_U(H_{i-1})$ and $\pi_U(H_{i+1})$. Hence there is a “chain” of $H_i$s connecting $1$ to $w$. In the picture, we selected a collection of seven $H_i$s whose projection is far from both $1$ and $w$.}
    \label{fig:block_of_close_Ai}
\end{figure}

\par\medskip
Now we look at the relation between $U$ and any $W_r$ for which $|j-r|\le 3$. 

\par\medskip
\textbf{Containment implies the conclusion.} Suppose first that $W_r\nest U$, for some $W_r$ as above. Then $V$, which was orthogonal to $U$, is also orthogonal to $W_r$. As $$\diam_{W_r}(H_r)\ge \dist_{W_r}(C_r, C_{r+1})\ge M\ge \Theta,$$ the orthogonal projection dichotomy for $H_r$ implies that $\pi_V(H_r)$ is $\Theta$-dense in $\C V$, and we are done. Then from now on assume that $U$ does not contain any $W_r$ as above.

\par\medskip
\textbf{No transversality.} Next, we argue that $U$ cannot be transverse to any $W_r$. Indeed $\dist_{W_r}(C_r, C_{r+1})\ge  M\ge 4E$. Therefore, if $U$ were transverse to $ W_r$, then at least one between $C_r$ and $C_{r+1}$ would be at distance greater than $E$ from $\rho^U_{W_r}$. By the Behrstock inequality, this would imply that $\dist_U(W_r, H_r)\le E$, and in turn
$$\dist_U(1, W_r)\ge\dist_U(1, H_{r})-\diam_{U}(\rho^{W_r}_U)-\dist_U(W_r, H_r)\ge$$
$$\ge\mathfrak{T}/4-7\Theta-2E\ge 2E,$$
by how we chose $\mathfrak{T}$ in Equation~\eqref{equation:mathfrakT}. The same proof gives that $\dist_U(W_{r},w)\ge 2E$. But then the Behrstock inequality would again imply that $1$ and $w$ both project $E$-close to $\rho^U_{W_r}$ inside $W_r$, which is impossible.

\par\medskip
\textbf{No consecutive orthogonality.} Then we can assume that $U$ is either orthogonal to, or nested in, each $W_r$ with $|j-r|\le 3$. 
Suppose that both $|j-r|\le 3$ and $|j-(r-1)|\le 3$. We claim that $U$ cannot be orthogonal to both $W_{r-1}$ and $W_r$. If this was the case, then $\pi_U(C_r)$ would be $R$-dense inside $\C U$, as $A$ and $B$ have no drift in the orthogonals (Definition~\ref{defn:no_drift}). But this cannot happen, as $\diam_U(C_r)\le\diam_U(H_r)\le \Theta$ while $\diam(\C U)\ge \dist_U(1,w)\ge \mathfrak{T}/2$, which is strictly greater than $2R+\Theta$ by our choice of $ \mathfrak{T}$ in Equation~\eqref{equation:mathfrakT}.

\par\medskip
\textbf{No alternated nesting.} We are left with the case when, whenever $|j-r|\le 3$ and $|j-(r-1)|\le 3$, $U$ is nested inside one between $W_{r-1}$ and $W_r$. As there are seven domains between $W_{j-3}$ and $W_{j+3}$, we can find three indices $j-3\le a<b<c\le j+3$ such that $U$ is nested in $W_a$, $W_b$, and $W_c$. The consistency Axiom~\eqref{item:dfs_transversal} yields that $\dist_{W_b}(W_a, U)\le E$ and $\dist_{W_b}(W_c, U)\le E$. Hence
$$\dist_{W_b}(W_a,W_c)\le \dist_{W_b}(W_a, U)+\diam(\rho^U_{W_b})+\dist_{W_b}(W_a, U)+\le 3E.$$
This gives a contradiction, because $a<b<c$ and Claim~\ref{claim:distance_of_consecutives} tells us that $\dist_{W_b}(W_a,W_c)$ must be at least $6E$.
\par\medskip
The proof of Theorem~\ref{thm:amalgamation_of_strongly_quasiconvex} is now complete.
\end{proof}

\bibliography{biblio}
\bibliographystyle{alpha}
\end{document}